\def\wid{\check{{\cc@style\underline{\mskip9.5mu}}}}
\def\Wideubar{\underaccent{{\cc@style\underline{\mskip6mu}}}}
\def\wideubar{\underaccent{{\cc@style\underline{\mskip9.5mu}}}}
\def\Wideubar{\underaccent{{\cc@style\underline{\mskip6mu}}}}
\def\widebar{\accentset{{\cc@style\underline{\mskip9.5mu}}}}
\def\Widebar{\accentset{{\cc@style\underline{\mskip6mu}}}}
\newtheorem{proposition}{Proposition}
\newtheorem{lemma}{Lemma}
\newtheorem{theorem}{Theorem}
\theoremstyle{remark}
\newcommand{\eqdef}{\overset{d}{=}}
\newcommand{\minimize}{{\rm minimize}}
\newcommand{\maximize}{{\rm maximize}}
\begin{document}
\title{Solving Almost all Systems of Random Quadratic Equations}

\author{
Gang Wang,
	Georgios B. Giannakis, Yousef Saad, and Jie Chen

\thanks{Work in this paper was supported partially by NSF grants 1500713 and 1514056.  
G. Wang and G. B. Giannakis are with the Digital Technology Center and the Department of Electrical and Computer Engineering, University of Minnesota, Minneapolis, MN 55455, USA. G. Wang is also with the State Key Lab of Intelligent Control and Decision of Complex Systems, Beijing Institute of Technology, Beijing 100081, P. R. China. Y. Saad is with the Department of Computer Science and Engineering, University of Minnesota, Minneapolis, MN 55455, USA.
J. Chen is with the State Key Lab of Intelligent Control and Decision of Complex Systems, Beijing Institute of Technology, Beijing 100081, P. R. China. Emails: \{gangwang,\,georgios,\,saad\}@umn.edu; chenjie@bit.edu.cn.}


}


\maketitle

\begin{abstract}
This paper deals with finding an $n$-dimensional solution $\bm{x}$ to a system of quadratic equations of the form $y_i=|\langle\bm{a}_i,\bm{x}\rangle|^2$ for $1\le i \le m$, which is also known as phase retrieval and is NP-hard in general. We put forth a novel procedure for minimizing the amplitude-based least-squares empirical loss, that starts with a weighted maximal correlation initialization obtainable with a few power or Lanczos iterations, followed by successive refinements based upon a sequence of iteratively reweighted (generalized) gradient iterations. The two (both the initialization and gradient flow) stages distinguish themselves from prior contributions by the inclusion of a fresh (re)weighting regularization technique. The overall algorithm is conceptually simple, numerically scalable, and easy-to-implement. For certain random measurement models, the novel procedure is shown capable of finding the true solution $\bm{x}$ in time proportional to reading the data $\{(\bm{a}_i;y_i)\}_{1\le i \le m}$. This holds with high probability and without extra assumption on the signal $\bm{x}$ to be recovered, provided that the number $m$ of equations is some constant $c>0$ times the number $n$ of unknowns in the signal vector, namely, $m>cn$. Empirically, the upshots of this contribution are: i) (almost) $100\%$ perfect signal recovery in the high-dimensional (say e.g., $n\ge 2,000$) regime given only an \emph{information-theoretic limit} number of noiseless equations, namely, $m=2n-1$ in the real-valued Gaussian case; and, ii) (nearly) optimal statistical accuracy in the presence of additive noise of bounded support. 
Finally, substantial numerical tests using both synthetic data and real images corroborate markedly improved signal recovery performance and computational efficiency of our novel procedure relative to state-of-the-art approaches. 

\end{abstract}

%

\begin{keywords}
 Nonconvex optimization, phase retrieval, iteratively reweighted gradient flow, convergence to global optimum, information-theoretic limit.
\end{keywords}

\section{Introduction}
\label{sec:intro}

One is often faced with solving quadratic equations of the form $y_i=|\langle\bm{a}_i,\bm{x}\rangle|^2$, or equivalently, 
\begin{equation}
\label{eq:quad}
\psi_i=|\langle \bm{a}_i,\bm{x}\rangle|,\quad 1\le i\le m
\end{equation}
where $\bm{x}\in\mathbb{R}^n$ 
is the wanted unknown $n\times 1$ signal, while given observations $\psi_i$ and feature/sensing vectors $\bm{a}_i\in\mathbb{R}^n$ that are collectively stacked in the data vector $\bm{\psi}:=[\psi_i]_{1\le i\le m}$ and the $m\times n$ sensing matrix $\bm{A}:=[\bm{a}_i]_{1\le i\le m}$, respectively. Put differently, given information about the (squared) modulus of the inner products of the signal vector $\bm{x}$ and several known measurement vectors $\bm{a}_i$, can one reconstruct exactly (up to a global sign) $\bm{x}$, or alternatively, the missing signs of $\langle\bm{a}_i,\bm{x}\rangle$? In fact, much effort has recently been devoted to determining the number of such equations necessary and/or sufficient for the uniqueness of the solution $\bm{x}$; see, for instance, \cite{2006balan}, \cite{savephase}. It has been proved that a number $m\ge 2n-1$ of generic \footnote{It is out of the scope of the present paper to explain the meaning of generic vectors, whereas interested readers are referred to~\cite{2006balan}.} (which includes the case of random vectors) measurement vectors $\bm{a}_i$ are sufficient for uniquely determining an $n$-dimensional real vector $\bm{x}$, while $m=2n-1$ is shown also necessary \cite{2006balan}. In this sense, the number $m=2n-1$ of equations as in \eqref{eq:quad}
 can be thought of as the information-theoretic limit for such a quadratic system to be solvable. Nevertheless, even for random measurement vectors, despite the existence of a unique solution given the minimal number $2n-1$ of quadratic equations, it is unclear so far whether there is a numerical polynomial-time algorithm that is able to stably find the true solution (say e.g., with probability $\ge 99\%$)?  

In diverse physical sciences and engineering fields, it is impossible or very difficult to record phase measurements. The problem of recovering the signal or phase from magnitude measurements only, also commonly known as phase retrieval, emerges naturally 
\cite{1982fienup,gerchberg,spm2015eldar}. Relevant application domains include e.g., X-ray crystallography, ptychography, astronomy, and coherent diffraction imaging \cite{spm2015eldar}. In such setups, optical measurement and detection systems record solely the photon flux, which is proportional to the (squared) magnitude of the field, but not the phase.
Problem \eqref{eq:quad} in its squared form, on the other hand, can be readily recast as an instance of nonconvex quadratically constrained quadratic programming (QCQP), that subsumes as special cases several well-known combinatorial optimization problems involving Boolean variables, e.g., the NP-complete stone problem \cite[Sec. 3.4.1]{book2001nemirovski}.
A related task of this kind is that of estimating the mixture of linear regressions, where the latent membership indicators can be converted into the missing phases  \cite{mixedlr}.
Although of simple form and practical relevance across different fields, solving systems of nonlinear equations is arguably the most difficult problem in all of the numerical computations \cite[Page 355]{1992book}.

Regarding common notation used throughout this paper, lower- (upper-) case boldface letters denote vectors (matrices). Calligraphic letters are reserved for sets, e.g., $\mathcal{S}$. Symbol $A/B$ means either $A$ or $B$, while fractions are denoted by $\nicefrac{A}{B}$. 
The floor operation $\lfloor c\rfloor$ gives the largest integer no greater than the given number $c>0$, $|\mathcal{S}|$ the number of entries in set $\mathcal{S}$, and $\|\bm{x}\|$ is the Euclidean norm. Since $\bm{x}\in\mathbb{R}^n$ and $-\bm{x}$ are indistinguishable given $\{\psi_i\}$, let ${\rm dist}(\bm{z},\bm{x})=\min\{\|\bm{z}+\bm{x}\|,\|\bm{z}-\bm{x}\|\}$ be the Euclidean distance of any estimate $\bm{z}\in\mathbb{R}^n$ to the solution set $\{\pm\bm{x}\}$ of \eqref{eq:quad}.     

\subsection{Prior contributions}
Following the least-squares criterion (which coincides with the maximum likelihood one assuming additive white Gaussian noise), the problem of solving systems of quadratic equations can be naturally recast as the ensuing empirical loss minimization
\begin{equation}\label{eq:ls}
\underset{\bm{z}\in\mathbb{R}^n}{\minimize}~\,\,
L(\bm{z}):=
\frac{1}{m}\sum_{i=1}^m \ell(\bm{z};\psi_i/y_i)
\end{equation}
where one can choose to work with the \emph{amplitude-based} loss function $\ell(\bm{z};\psi_i):=
\nicefrac{(\psi_i-|\langle \bm{a}_i,\bm{z}\rangle|)^2}{2}$ \cite{taf}, or the \emph{intensity-based} ones $\ell(\bm{z};y_i):=
\nicefrac{(y_i-|\langle \bm{a}_i,\bm{z}\rangle|^2)^2}{2}$ \cite{wf}, \cite{acha2014yesm}, and its related Poisson likelihood $\ell(\bm{z};y_i):=y_i\log(|\langle\bm{a}_i,\bm{z}\rangle|^2)-|\langle\bm{a}_i,\bm{z}\rangle|^2$ \cite{twf}. 
Either way, the objective functional $L(\bm{z})$ is rendered nonconvex; hence, it is in general NP-hard and computationally intractable to compute the least-squares or the maximum likelihood estimate (MLE) \cite{book2001nemirovski}. 

Minimizing the squared modulus-based least-squares loss in \eqref{eq:ls}, several numerical polynomial-time algorithms have been devised based on convex programming for certain choices of design vectors $\bm{a}_i$
\cite{
	 array,
	phaselift, 
phasecut,tit2015ccg,fcm2014candes,acha2015krt}. 
Relying upon the so-called matrix-lifting technique (a.k.a, Shor's relaxation), semidefinite programming (SDP) based convex approaches first express all intensity data into linear terms in a new rank-$1$ matrix variable, followed by solving a convex SDP after dropping the rank constraint (a.k.a., semidefinite relaxation). It has been established that perfect recovery and (near-)optimal statistical accuracy can be achieved in noiseless and noisy settings respectively 
with an optimal-order number of measurements \cite{fcm2014candes}. In terms of computational efficiency however, convex approaches entail storing and solving for an $n\times n$ semi-definite matrix, whose worst-case computational complexity scales as $n^{4.5}\log \nicefrac{1}{\epsilon}$ provided that the number $m$ of constraints is on the order of the dimension $n$ \cite{phasecut}, which does not scale well to high-dimensional tasks. Another recent line of convex relaxation \cite{phasemax}, \cite{phasemax1}, 
\cite{phasemaxproof} 
reformulated the problem of phase retrieval as that of sparse signal recovery, and solves a linear program in the natural parameter vector domain. 
Although exact signal recovery can be established assuming an accurate enough anchor vector, its empirical performance is not competitive with state-of-the-art phase retrieval approaches.    

Instead of convex relaxation, recent proposals also
advocate
judiciously initialized iterative procedures for coping with certain nonconvex formulations directly, which include solvers based on e.g., alternating minimization \cite{altmin}, Wirtinger flow \cite{wf}, \cite{twf}, \cite{reshaped}, 
 \cite{mtwf}, \cite{arxiv2016be}, \cite{pwf}, \cite{gu2017}, amplitude flow \cite{taf,staf,sparta,nips2016wg}, as well as a prox-linear procedure via composite optimization \cite{duchi2017}, \cite{duchi2017stochastic}. 
These nonconvex approaches operate directly upon vector optimization variables, therefore leading to significant computational advantages over matrix-lifting based convex counterparts. With random features, they can be interpreted as performing stochastic optimization over acquired data samples $\{(\bm{a}_i;\psi_i/y_i)\}_{1\le i \le m}$ to approximately minimize the population risk functional $\widebar{L}(\bm{z}):=\mathbb{E}_{(\bm{a}_i,\psi_i/y_i)}[\ell(\bm{z};\psi_i/y_i)]$. It is well documented that minimizing nonconvex functionals is computationally
intractable in general due to existence of multiple stationary points \cite{book2001nemirovski}. Assuming random Gaussian sensing vectors however, such nonconvex paradigms can provably locate the global optimum under suitable conditions, some of which also achieve optimal (statistical) guarantees.  
Specifically, starting with a judiciously designed initial guess,
successive improvement is effected based upon a sequence of (truncated) (generalized) gradient iterations given by
\begin{equation}
\label{eq:update}
\bm{z}^{t+1}:=\bm{z}^t-\frac{\mu^t}{m}\sum_{i\in\mathcal{T}^{t+1}}\!\!\!\nabla \ell_i(\bm{z}^t;\psi_i/y_i),\quad t=0,\,1,\,\ldots
\end{equation}
where $\bm{z}^t$ denotes the estimate returned by the algorithm 
at the $t$-th iteration, $\mu^t>0$ the learning rate, 
and $\nabla\ell(\bm{z}^t,\psi_i/y_i)$ is the (generalized) gradient of the modulus- or squared modulus-based least-squares loss evaluated at $\bm{z}^t$ \cite{clarke1975gg}. Here, $\mathcal{T}^{t+1}$ represents some time-varying index set signifying and effecting the per-iteration truncation.

Although they achieve optimal statistical guarantees in both noiseless and noisy settings, state-of-the-art (convex and nonconvex) approaches studied under random Gaussian designs, empirically require stable recovery of a number of equations (several) times larger
than the aforementioned information-theoretic limit \cite{twf,wf,reshaped}. As a matter of fact, when there are numerous enough measurements (on the order of the signal dimension $n$ up to some polylog factors), 
the modulus-square based least-squares loss functional admits benign geometric structure in the sense that \cite{sun2016}: with high probability, i) all local minimizers are global; and, ii) there always exists a negative directional curvature at every saddle point. In a nutshell,
the grand challenge of solving systems of random quadratic equations
remains to develop numerical polynomial-time algorithms capable of achieving perfect recovery and optimal statistical accuracy when the number of measurements approaches the information-theoretic limit.

\subsection{This work}

Building upon but going beyond the scope of the aforementioned nonconvex paradigms, 
the present paper puts forward a novel iterative linear-time procedure, namely, time proportional to that required by the processor to scan the entire data $\{(\bm{a}_i;\psi_i)\}_{1\le i\le m}$, which we term \emph{reweighted amplitude flow} and abbreviate as RAF. Our methodology is capable of solving noiseless random quadratic equations exactly, and of constructing an estimate of (near)-optimal statistical accuracy from noisy modulus observations. Exactness and accuracy hold with high probability and without extra assumption on the signal $\bm{x}$ to be recovered, provided that the ratio $m/n$ of the number of measurements to that of the unknowns exceeds some large constant. Empirically, our approach is demonstrated to be able to achieve perfect recovery of arbitrary high-dimensional signals given a \emph{minimal} number of equations, which in the real case is $m=2n-1$. The new twist here is to leverage judiciously designed yet conceptually simple (iterative)(re)weighting regularization techniques to enhance existing initializations and also gradient refinements. 
An informal depiction of our RAF methodology is given in two stages below, with rigorous algorithmic details deferred to Section \ref{sec:main}: 

\begin{enumerate}
	\item[\textbf{S1)}] \textbf{Weighted maximal correlation initialization:} Obtain an initializer $\bm{z}^0$ maximally correlated with a carefully selected subset $\mathcal{S}\subsetneqq \mathcal{M}:=\{1,2,\ldots,m\}$ of feature vectors $\bm{a}_i$, whose contributions toward constructing $\bm{z}^0$ are judiciously weighted by suitable parameters $\{w_{i}^0>0\}_{i\in\mathcal{S}}$.
	\item[\textbf{S2)}] \textbf{Iteratively reweighted ``gradient-like'' iterations:} Loop over 
	$0\le t\le T$
	\begin{equation}\vspace{-5pt}
	\bm{z}^{t+1}=\bm{z}^t-\frac{\mu^t}{m}\sum_{i=1}^m w_{i}^t\,\nabla\ell(\bm{z}^t;\psi_i)
	\end{equation}
	for some time-varying weights $w_{i}^t\ge 0$ that are adaptive in time, each depending on the current iterate $\bm{z}_t$ and the datum $(\bm{a}_i;\psi_i)$.  
\end{enumerate}

Two attributes of our novel methodology are worth highlighting. First, albeit being a variant of the orthogonality-promoting initialization \cite{taf}, the initialization here [cf. S1)] is distinct in the sense that different importance is attached to each selected datum $(\bm{a}_i;\psi_i)$, or more precisely, to each selected directional vector $\bm{a}_i$. Likewise, the gradient flow [cf. S2)] weights judiciously the search direction suggested by each datum $(\bm{a}_i;\psi_i)$. In this manner, more accurate and robust initializations as well as more stable overall search directions in the gradient flow stage can be obtained even based only on a relatively limited number of data samples. Moreover, with particular choices of weights $w_{i}^t$'s (for example, when they take $0/1$ values), our methodology subsumes as special cases the recently proposed truncated amplitude flow (TAF) \cite{taf} and reshaped Wirtinger flow (RWF) \cite{reshaped}.

 The reminder of this paper is structured as follows. The two stages of our RAF algorithm are motivated and developed in Section~\ref{sec:alg}, and
 Section~\ref{sec:main} summarizes the algorithm and establishes its theoretical performance. 
 Extensive numerical tests evaluating the performance of RAF relative to state-of-the-art approaches are presented in Section~\ref{sec:test}, while useful technical lemmas and main proof ideas are given in Section~\ref{sec:proof}.  Concluding remarks are drawn in \ref{sec:con}, and technical proof details are provided in the Appendix at the end of the paper.

\section{Algorithm: Reweighted Amplitude Flow}
\label{sec:alg}

This section explains the intuition and the basic principles behind each stage of RAF in detail. For theoretical concreteness, we focus on the \emph{real-valued Gaussian model} with a real signal $\bm{x}$, and independent Gaussian random measurement vectors $\bm{a}_i\sim \mathcal{N}(\bm{0},\bm{I})$, $1\le i\le m$. Nevertheless, our RAF approach can be applied without algorithmic changes even when the complex-valued Gaussian with $\bm{x}\in\mathbb{C}^n$ and independent $\bm{a}_i\sim\mathcal{CN}(\bm{0},\bm{I}_n):=\mathcal{N}(\bm{0},\bm{I}_n/2)+j\mathcal{N}(\bm{0},\bm{I}_n/2)$, and also the coded diffraction pattern (CDP) models are considered. 

\subsection{Weighted maximal correlation initialization}
\label{subsec:initial}

A key enabler of general nonconvex iterative heuristics' success in finding the global optimum is to seed them with an excellent starting point \cite{tit2010spectral}.     
In fact, several smart initialization strategies have been advocated for iterative phase retrieval algorithms; see e.g., the spectral \cite{altmin}, 
\cite{wf}, truncated spectral \cite{twf}, \cite{reshaped}, and orthogonality-promoting \cite{taf} initializations. 
One promising approach among them is the one proposed in \cite{taf}, which is robust to outliers \cite{duchi2017}, and also enjoys better phase transitions than the spectral procedures \cite{lu2017transition}.  
To hopefully achieve perfect signal recovery at the information-theoretic limit however, its numerical performance may still need further enhancement. On the other hand, it is intuitive that to improve the initialization performance (over state-of-the-art schemes) becomes increasingly challenging as the number of acquired data samples approaches the information-theoretic limit of $m=2n-1$. 

In this context, we develop a more flexible initialization scheme based on the correlation property (as opposed to the orthogonality), in which the added benefit relative to the initialization procedure in \cite{taf} is the inclusion of a flexible weighting regularization technique to better balance the useful information exploited in all selected data. In words, we introduce judiciously designed weights to the initialization procedure developed in \cite{taf}. 
Similar to related approaches, our strategy entails estimating both the norm $\|\bm{x}\|$ and the unit direction $\nicefrac{\bm{x}}{\|\bm{x}\|}$. Leveraging the strong law of large numbers and the rotational invariance of Gaussian $\bm{a}_i$ sampling vectors (the latter suffices to assume $\bm{x}=\|\bm{x}\|\bm{e}_1$, with $\bm{e}_1$ being the first canonical vector in $\mathbb{R}^n$), it is clear that
\begin{equation}\label{eq:normest}
\frac{1}{m}	\sum_{i=1}^m\psi_i^2=\frac{1}{m}\sum_{i=1}^m \big|\langle \bm{a}_i, \|\bm{x}\|\bm{e}_1\rangle \big|^2=\Big(\frac{1}{m}\sum_{i=1}^m a_{i,1}^2\Big)\|\bm{x}\|^2\approx \|\bm{x}\|^2
\end{equation} 
whereby $\|\bm{x}\|$ can be estimated to be $\sum_{i=1}^m\!\nicefrac{\psi_i^2}{m}$. This estimate proves very accurate even with an information-theoretic limit number of data samples because it is unbiased and tightly concentrated.

The challenge thus lies in accurately estimating the direction of $\bm{x}$, or seeking a unit vector maximally aligned with $\bm{x}$, which is a bit tricky. 
To gain intuition into our initialization strategy, let us first present a variant of the initialization in \cite{taf}, whose robust counterpart to outlying measurements has been recently discussed in \cite{duchi2017}.
Note that the larger the modulus $\psi_i$ of the inner-product between $\bm{a}_i$ and $\bm{x}$ is, the known design vector $\bm{a}_i$ is deemed \emph{more correlated} to the unknown solution $\bm{x}$, hence bearing useful directional information of $\bm{x}$. Inspired by this fact and based on available data $\{(\bm{a}_i;\psi_i)\}_{1\le i\le m}$, one can sort all (absolute) correlation coefficients $\{\psi_i\}_{1\le i\le m}$ in an ascending order, to yield ordered coefficients denoted by $0<\psi_{[m]} \le \cdots\le \psi_{[2]}\le\psi_{[1]}$. Sorting $m$ records takes time proportional to $\mathcal{O}(m\log m)$. \footnote{$f(m)=\mathcal{O}(g(m))$ means that there exists a constant $C>0$ such that $|f(m)|\le C|g(m)|$.} Let $\mathcal{S}\subsetneqq\mathcal{M}$ represent the set of selected feature vectors $\bm{a}_i$ to be used for computing the initialization, which is to be designed next. Fix \emph{a priori} the cardinality $|\mathcal{S}|$ to some integer on the order of $m$, say, $|\mathcal{S}|:=\lfloor \nicefrac{3m}{13}\rfloor$.
It is then natural to \emph{define} $\mathcal{S}$ to 
collect the $\bm{a}_i$ vectors that correspond to one of the largest $|\mathcal{S}|$ correlation coefficients $\{\psi_{[i]}\}_{1\le i\le |\mathcal{S}|}$, each of which can be thought of as pointing to (roughly) the direction of $\bm{x}$. Approximating the direction of $\bm{x}$ thus boils down to finding a vector to maximize its correlation with the subset $\mathcal{S}$ of selected directional vectors $\bm{a}_i$. Succinctly, the wanted approximation vector can be efficiently found as the solution of 
\begin{equation}\label{eq:mci}
\underset{\|\bm{z}\|=1}{\maximize}~\,\frac{1}{|\mathcal{S}|}\sum_{i\in\mathcal{S}} \big|\langle\bm{a}_i,\bm{z}\rangle\big|^2=
\bm{z}^\ast\Big(\frac{1}{|\mathcal{S}|}\sum_{i\in\mathcal{S}}\bm{a}_i\bm{a}_i^\ast\Big)\bm{z}
\end{equation} 
where the superscript $^\ast$ represents the transpose. Upon scaling the solution of \eqref{eq:mci} by the norm estimate $\sum_{i=1}^m\!\nicefrac{\psi_i^2}{m}$ in \eqref{eq:normest}
to match the size of $\bm{x}$, 
we obtain what we will henceforth refer to as maximal correlation initialization.

As long as $|\mathcal{S}|$ is chosen on the order of $m$, the maximal correlation method outperforms the spectral ones in \cite{wf,altmin,twf}, and has comparable performance to the orthogonality-promoting method \cite{taf}.
Its empirical performance 
around the information-theoretic limit however, is still not the best that we can hope for. Observe that all directional vectors $\{\bm{a}_i\}_{i\in\mathcal{S}}$ selected for forming the matrix $\overline{\bm{Y}}:=\frac{1}{|\mathcal{S}|}\sum_{i\in\mathcal{S}}\bm{a}_i\bm{a}_i^\ast$ in \eqref{eq:mci} are treated \emph{the same} in terms of their contributions to constructing the (direction of the) initialization. Nevertheless, according to our starting principle, this ordering information carried by the selected $\bm{a}_i$ vectors has \emph{not} been exploited by the initialization scheme in \eqref{eq:mci} (see also \cite{taf}, \cite{duchi2017}). In words, 
if for selected data $i,j\in\mathcal{S}$, the correlation coefficient of $\psi_i$ with $\bm{a}_i$ is larger than that of $\psi_j$ with $\bm{a}_j$, then $\bm{a}_i$ is deemed more correlated (with $\bm{x}$) than $\bm{a}_j$ is, hence bearing more useful information about the wanted direction of $\bm{x}$. It is thus prudent to weight more (i.e., attach more importance to) the selected $\bm{a}_i$ vectors corresponding to larger $\psi_i$ values. Given the ordering information $\psi_{[|\mathcal{S}|]}\le \cdots\le \psi_{[2]}\le \psi_{[1]}$ available from the sorting procedure, a natural way to achieve this goal is weighting each $\bm{a}_i$ vector with simple functions of $\psi_i$, say e.g., taking the weights $w_{i}^0:=\psi_i^\gamma,~\forall i\in \mathcal{S}$, with the exponent parameter $\gamma\ge 0$ chosen to maintain the wanted ordering $w_{|\mathcal{S}|}^{0}\le \cdots\le w_{[2]}^0\le w_{[1]}^0$. 
In a nutshell, a more flexible initialization scheme, that we refer to as \emph{weighted maximal correlation},
can be summarized as follows
\begin{equation}
\label{eq:rmci}
\tilde{\bm{z}}_0:=\arg\max_{\|\bm{z}\|=1}~\,\bm{z}^\ast\Big(\frac{1}{|\mathcal{S}|}\sum_{i\in\mathcal{S}}\psi_i^\gamma\bm{a}_i\bm{a}_i^\ast\Big)\bm{z}
\end{equation}
which can be efficiently evaluated in time proportional to $\mathcal{O}(n|\mathcal{S}|)$ by means of the power method or the Lanczos algorithm \cite{book2011saad}.  The proposed 
initialization can be obtained upon scaling $\tilde{\bm{z}}_0$ from \eqref{eq:rmci} by the norm estimate, to yield $\bm{z}_0:=(\sum_{i=1}^m\!\nicefrac{\psi_i^2}{m})\tilde{\bm{z}}_0$. 
By default, we take $\gamma:=\nicefrac{1}{2}$ in all reported numerical implementations, yielding weights $w_{i}^0:=\sqrt{|\langle\bm{a}_i,\bm{x}\rangle|}$ for all $i\in\mathcal{S}$. 

Regarding the initialization procedure in \eqref{eq:rmci}, we next highlight two features, while details and theoretical performance guarantees are provided in Section \ref{sec:main}:
\begin{enumerate}
	\item[\textbf{F1)}] 
	The weights $\{w_{i}^0\}$ in the maximal correlation scheme enable leveraging useful information that each feature vector $\bm{a}_i$ may bear regarding the direction of $\bm{x}$. 
	\item[\textbf{F2)}] 
	Taking $w_i^0:=\psi_i^\gamma$ for all $i\in\mathcal{S}$ and $0$ otherwise, problem \eqref{eq:rmci} can be equivalently rewritten as
	\begin{equation}
	\tilde{\bm{z}}_0:=\arg\max_{\|\bm{z}\|=1}~~\bm{z}^\ast\Big(\frac{1}{m}\sum_{i=1}^mw_i^0\bm{a}_i\bm{a}_i^\ast\Big)\bm{z}
	\end{equation} 
	which subsumes existing initialization schemes with particular selections of weights. For instance, the ``plain-vanilla'' spectral initialization in \cite{altmin,wf} is recovered by choosing $\mathcal{S}:=\mathcal{M}$, and $w_i^0:=\psi_i^2$ for all $1\le i\le m$. 
\end{enumerate}

\begin{figure}[t]
	\vspace{-10pt}
	\centering
	\includegraphics[scale = 0.58]{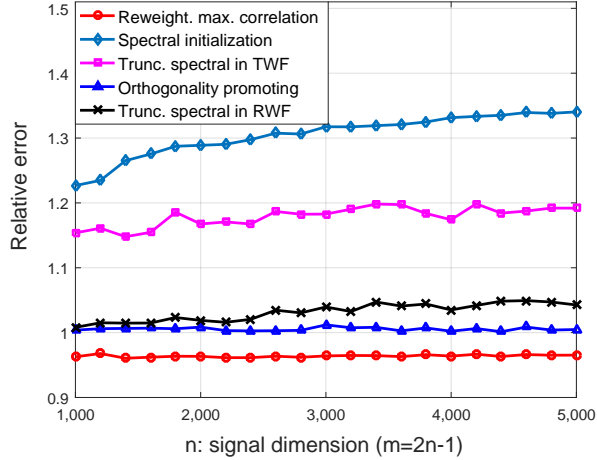}
	\vspace{-0pt}
	\caption{Relative initialization error for the real-valued Gaussian model with $n=1,000$ and $m=2n-1=1,9999$.}
	\label{fig:initratio}
	\vspace{-0pt}
\end{figure}
For numerical comparison, define the 
$	{\rm Relative~error}:=\nicefrac{{\rm dist}(\bm{z},\bm{x})}{\|\bm{x}\|}$.
All simulated results reported in this paper were averaged over $100$ Monte Carlo (MC) realizations. 
Figure \ref{fig:initratio} evaluates the performance of the proposed initialization relative to several state-of-the-art strategies, and also with the information limit number of data benchmarking the minimal number of samples required.
It is clear that our initialization is: i) consistently better than the state-of-the-art;
and, ii) stable as the signal dimension $n$ grows, which is in sharp contrast to the instability encountered by the spectral ones \cite{altmin,wf,twf,reshaped}. 
It is also worth stressing that the about $5\%$ empirical advantage (over the best \cite{taf}) at the challenging \emph{information-theoretic benchmark} is indeed nontrivial, and is one of the main RAF upshots. This numerical advantage becomes increasingly pronounced as the ratio $m/n$ of the number of equations to the unknowns grows. In this regard, our proposed initialization procedure may be combined with other iterative phase retrieval approaches to improve their numerical performance.

\subsection{Adaptively reweighted gradient flow}
\label{subsec:gradient}

For independent data adhering to the real-valued Gaussian model, the direction that TAF moves along in stage S2) presented earlier is given by the following (generalized) gradient \cite{taf}, \cite{clarke1975gg}:
\begin{equation}
\label{eq:grad}
\frac{1}{m}\sum_{i\in\mathcal{T}}\nabla \ell(\bm{z};\psi_i)=\frac{1}{m}\sum_{i\in\mathcal{T}}\Big(\bm{a}_i^\ast\bm{z}-\psi_i\frac{\bm{a}_i^\ast\bm{z}}{|\bm{a}_i^\ast\bm{z}|}\Big)\bm{a}_i
\end{equation}
where the dependence on the iterate count $t$ is neglected for notational brevity, and the convention $\nicefrac{\bm{a}_i^\ast\bm{z}}{|\bm{a}_i^\ast\bm{z}|}:=0$ is adopted if $\bm{a}_i^\ast\bm{z}=0$. 

Unfortunately, the (negative) gradient of the average in \eqref{eq:grad} may not point towards the true $\bm{x}$ unless the current iterate $\bm{z}$ is already very close to $\bm{x}$. As a consequence, moving along such a descent direction may not drag $\bm{z}$ closer to $\bm{x}$. To see this, consider an initial guess $\bm{z}_0$ that has already been in a \emph{basin of attraction} (i.e., a region within which there is only a unique stationary point) of $\bm{x}$. Certainly, there are summands $(\bm{a}_i^\ast\bm{z}-\psi_i\nicefrac{\bm{a}_i^\ast\bm{z}}{|\bm{a}_i^\ast\bm{z}|})\bm{a}_i$ in \eqref{eq:grad}, that could give rise to ``bad/misleading'' search directions due to the erroneously estimated signs $\nicefrac{\bm{a}_i^\ast\bm{z}}{|\bm{a}_i^\ast\bm{z}|}\ne \nicefrac{\bm{a}_i^\ast\bm{x}}{|\bm{a}_i^\ast\bm{x}|}$  in \eqref{eq:grad} \cite{taf}. Those gradients as a whole may drag $\bm{z}$ away from $\bm{x}$, and hence out of the basin of attraction. Such an effect becomes increasingly severe as the number $m$ of acquired examples approaches the information-theoretic limit of $2n-1$, thus rendering past approaches less effective in this case. Although this issue is somewhat remedied by TAF with a truncation procedure, its efficacy is still limited due to misses of bad gradients and mis-rejections of meaningful ones around the information-theoretic limit.

To address this challenge, our reweighted gradient flow effecting suitable search directions from \emph{almost all} acquired data samples $\{(\bm{a}_i;\psi_i)\}_{1\le i\le m}$ will be adopted in a (timely) adaptive fashion; that is,
\begin{equation}
\label{eq:raf}
\bm{z}^{t+1}=\bm{z}^t-{\mu^t}\nabla\ell_{\rm rw}(\bm{z}^t;\psi_i),\quad t=0,\,1,\,\ldots
\end{equation}
The \emph{reweighted gradient} $\nabla\ell_{\rm rw}(\bm{z}^t)$ evaluated at the current point $\bm{z}^t$ is given as
\begin{equation}
\label{eq:rgrad}
\nabla\ell_{\rm rw}(\bm{z})\!:=\!\frac{1}{m}	\sum_{i=1}^m w_i\nabla\ell(\bm{z};\psi_i)
\end{equation}
for suitable weights $\{w_i\}_{1\le i\le m}$ to be designed shortly.

To that end, we observe that the truncation criterion
$\mathcal{T}\!:=\!\{1\le i\le m:\nicefrac{|\bm{a}_i^\ast\bm{z}|}{|\bm{a}_i^\ast\bm{x}|}\ge \alpha\}$ with some given parameter $\alpha>0$ suggests to include only gradients associated with $|\bm{a}_i^\ast\bm{z}|$ of relatively large sizes. 
This is because gradients of sizable $\nicefrac{|\bm{a}_i^\ast\bm{z}|}{|\bm{a}_i^\ast\bm{x}|}$ offer reliable and meaningful directions pointing to the truth $\bm{x}$ with large probability \cite{taf}. As such, the ratio $\nicefrac{|\bm{a}_i^\ast\bm{z}|}{|\bm{a}_i^\ast\bm{x}|}$ can be somewhat viewed as a confidence score about the reliability or meaningfulness of the corresponding gradient $\nabla\ell(\bm{z};\psi_i)$.
Recognizing that confidence can vary, it is natural to distinguish the contributions that different gradients make to the overall search direction.  
An easy way is to attach large weights to the reliable gradients, and small weights to the spurious ones. Assume without loss of generality that $0\le w_i\le 1$ for all $1\le i\le m$; otherwise, lump the normalization factor achieving this into the learning rate $\mu^t$. Building upon this observation and leveraging the gradient reliability confidence score $\nicefrac{|\bm{a}_i^\ast\bm{z}|}{|\bm{a}_i^\ast\bm{x}|}$, the weight per gradient $\nabla\ell(\bm{z};\psi_i)$ in our proposed RAF algorithm is designed to be 
\begin{equation}
\label{eq:weights}
w_i:=\frac{1}{1+\nicefrac{\beta_i}{
		(\nicefrac{|\bm{a}_i^\ast\bm{z}|}{|\bm{a}_i^\ast\bm{x}|})}}
,\quad 1\le i\le m
\end{equation}
where $\{\beta_i>0\}_{1\le i\le m}$ are some pre-selected parameters. 

Regarding the weighting criterion in \eqref{eq:weights}, three remarks are in order:

\begin{enumerate}
	\item[\textbf{R1)}] The weights $\{w_i^t\}_{1\le i\le m}$ are time adapted to the iterate $\bm{z}^t$. One can also interpret the reweighted gradient flow $\bm{z}^{t+1}$ in \eqref{eq:raf} as performing a single gradient step to minimize the \emph{smooth} \emph{reweighted} loss $\frac{1}{m}\sum_{i=1}^mw_i^t\ell(\bm{z};\psi_i)$ with starting point $\bm{z}^t$;
	see also \cite{irls2008}
	 for related ideas successfully exploited in the \emph{iteratively reweighted least-squares} approach to compressive sampling.
	
	\item[\textbf{R2)}] Note that the larger the confidence score $\nicefrac{|\bm{a}_i^\ast\bm{z}|}{|\bm{a}_i^\ast\bm{x}|}$ is, the larger the corresponding weight $w_i$ will be. More importance will be then attached to reliable gradients than to spurious ones. Gradients from \emph{almost all} data are are accounted for, which is in contrast to \cite{taf}, where withdrawn gradients do not contribute the information they carry. 
	
	\item[\textbf{R3)}] At the points $\{\bm{z}\}$ where $\bm{a}_i^\ast\bm{z}=0$ for some datum $i\in\mathcal{M}$, the $i$-th weight will be $w_i=0$. In other words, the squared losses $\ell(\bm{z};\psi_i)$ in \eqref{eq:ls} that are nonsmooth at points $\bm{z}$ will be eliminated, to prevent their contribution to the reweighted gradient update in \eqref{eq:raf}. Hence, the convergence analysis of RAF is considerably simplified because it does not have to cope with the nonsmoothness of the objective function in \eqref{eq:ls}.
	
\end{enumerate}

Having elaborated on the two stages, RAF can be readily summarized in Algorithm~\ref{alg:raf}. 

\subsection{Algorithmic parameters}
To optimize the empirical performance and facilitate numerical implementations, choice of pertinent algorithmic parameters of RAF is independently discussed here. It is obvious that the RAF algorithm entails four parameters. Our theory and all experiments are based on: i) $\nicefrac{|\mathcal{S}|}{m}\le 0.25$; ii) 
$0\le \beta_i\le 10$ for all $1\le i\le m$; and, iii) $0\le \gamma\le 1$.   
For convenience, a constant step size $\mu^t\equiv\mu>0$ is suggested, but other step size rules such as backtracking line search with the reweighted objective work as well. As will be formalized in Section \ref{sec:main}, RAF converges if the constant $\mu$ is not too large, with the upper bound depending in part on the selection of $\{\beta_i\}_{1\le i\le m}$.

In the numerical tests presented in Section \ref{sec:alg} and \ref{sec:test}, we take $|\mathcal{S}|:=\lfloor\nicefrac{3m}{13}\rfloor $, $\beta_i\equiv\beta: =10$, $\gamma:=0.5$, and $\mu:=2$ (larger step sizes can be afforded for larger $m/n$ values).

\begin{algorithm}[t]
	\caption{Reweighted Amplitude Flow}
	\label{alg:raf}
	\begin{algorithmic}[1]
		\STATE {\bfseries Input:}
		Data $\{(\bm{a}_i;\psi_i\}_{1\le i\le m}$; maximum number of iterations $T$; step sizes $\mu^t=2/6$ and weighting parameters $\beta_i=10/5$ for real-/complex-valued Gaussian models; subset cardinality $|\mathcal{S}|=\lfloor\nicefrac{3m}{13}\rfloor$, and exponent $\gamma=0.5$.
		\STATE {\bfseries Construct} $\mathcal{S}$ to include indices associated with the $|\mathcal{S}|$ largest entries among $\{\psi_i\}_{1\le i\le m}$.
		\STATE {\bfseries Initialize} \label{step:3} $\bm{z}^0:=\sqrt{\nicefrac{\sum_{i=1}^m\psi_i^2}{m}}\,\tilde{\bm{z}}^0$ with $\tilde{\bm{z}}^0$ being 
		the unit principal eigenvector of 
		\begin{equation}
		\label{eq:ymatrix}
		\bm{Y}:=\frac{1}{m}\sum_{i=1}^mw_i^0 \bm{a}_i\bm{a}_i^\ast,\quad {\rm where}\quad w_i^0:=\left\{\!\!\begin{array}{cc}
		\psi_i^\gamma,& i\in\mathcal{S}\!\subseteq\!\mathcal{M}\!\\
		0,&{\rm otherwise}
		\end{array}
		\right..
		\end{equation}
		\STATE {\bfseries Loop: for} \label{step:4}
		{$t=0$ {\bfseries to} $T-1$}
		{ \begin{equation}
			\bm{z}^{t+1}=\bm{z}^t-\frac{\mu^t}{m}	\sum_{i=1}^m w_i^t\,
			\Big(\bm{a}_i^\ast\bm{z}^t-\psi_i\frac{\bm{a}_i^\ast\bm{z}^t}{|\bm{a}_i^\ast\bm{z}^t|}\Big)\bm{a}_i
			\label{eq:final}
			\end{equation}  
			where $w_i^t:=\frac{\nicefrac{|\bm{a}_i^\ast\bm{z}^t|}{\psi_i}}{\nicefrac{|\bm{a}_i^\ast\bm{z}^t|}{\psi_i}+\beta_i}$ for all $1\le i \le m$. }
		\STATE {\bfseries Output:}\label{step:5} 
		$\bm{z}^{T}$.
	\end{algorithmic}
\end{algorithm} 


\vspace{-3pt}
\section{Main Results}
\label{sec:main}

Our main result summarized in Theorem \ref{thm:initial} next establishes exact recovery under the real-valued Gaussian model, whose proof is postponed to Section \ref{sec:proof} for readability. Our RAF methogolody however can be generalized readily to the complex Gaussian and CDP models.   

\begin{theorem}[Exact recovery]
	\label{thm:initial}
	Consider $m$ noiseless measurements $\bm{\psi}=|\bm{A}\bm{x}|$ for
	an arbitrary signal $\bm{x}\in\mathbb{R}^n$.
	If $m\ge c_0|\mathcal{S}|\ge c_1 n$ with $|\mathcal{S}|$ being the pre-selected subset cardinality in the initialization step
	and the learning rate $\mu\le \mu_0$, then with probability at least $1-c_3{\rm e}^{-c_2m}$, the reweighted amplitude flow's estimates $\bm{z}^t$ in Algorithm~\ref{alg:raf} 
	obey 
	\begin{equation}\label{eq:linear}
	{\rm dist}( \bm{z}^t,\bm{x})\le \frac{1}{10}(1-\nu)^t\|\bm{x}\|,\quad t=0,1,\ldots
	\end{equation} 
	where $c_0,\,c_1,\,c_2,\,c_3>0$, 
	$0<\nu<1$, and $\mu_0>0$ are certain numerical constants depending on the choice of algorithmic parameters $|\mathcal{S}|,\,\beta$, $\gamma$, and $\mu$.  
\end{theorem}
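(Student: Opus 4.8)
The plan is to follow the now-standard two-stage template for nonconvex phase retrieval: first show that the weighted maximal correlation initializer $\bm{z}^0$ lands inside a \emph{basin of attraction} of $\pm\bm{x}$, namely ${\rm dist}(\bm{z}^0,\bm{x})\le\frac{1}{10}\|\bm{x}\|$, and then show that each reweighted gradient step contracts the distance to the solution set geometrically, so that \eqref{eq:linear} follows by induction on $t$. By the rotational invariance of the Gaussian ensemble I would fix $\bm{x}=\|\bm{x}\|\bm{e}_1$ throughout, and work with whichever of $\pm\bm{x}$ is closer to the current iterate.

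For the initialization (Stage 1), I would analyze the matrix $\bm{Y}=\frac{1}{m}\sum_{i=1}^m w_i^0\,\bm{a}_i\bm{a}_i^\ast$ of \eqref{eq:ymatrix}. First I would compute its expectation: because the selection $i\in\mathcal{S}$ keeps the indices with the largest $\psi_i=|\bm{a}_i^\ast\bm{x}|$ and the weights $w_i^0=\psi_i^\gamma$ further emphasize that direction, $\mathbb{E}[\bm{Y}]$ takes the form $\lambda_1\,\frac{\bm{x}\bm{x}^\ast}{\|\bm{x}\|^2}+\lambda_2\,(\bm{I}-\frac{\bm{x}\bm{x}^\ast}{\|\bm{x}\|^2})$ with a strictly positive spectral gap $\lambda_1-\lambda_2$ whose size I would read off from the relevant Gaussian order-statistic integrals. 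Next I would establish the concentration $\|\bm{Y}-\mathbb{E}[\bm{Y}]\|\le\delta(\lambda_1-\lambda_2)$ with probability at least $1-c_3\,{\rm e}^{-c_2 m}$, handling the data-dependent selection $\mathcal{S}$ by replacing the random order-statistic threshold with a deterministic one (valid up to an exponentially small event) and then applying a truncated matrix concentration bound over an $\epsilon$-net. A Davis--Kahan eigenvector perturbation argument then forces the principal eigenvector $\tilde{\bm{z}}^0$ to align with $\bm{x}/\|\bm{x}\|$ up to an $O(\delta)$ error, and combining this with the tightly concentrated norm estimate $\sum_{i}\nicefrac{\psi_i^2}{m}\approx\|\bm{x}\|^2$ of \eqref{eq:normest} yields ${\rm dist}(\bm{z}^0,\bm{x})\le\frac{1}{10}\|\bm{x}\|$ once $m\ge c_0|\mathcal{S}|\ge c_1 n$ with $\delta$ small.

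For the gradient stage (Stage 2), I would prove a \emph{local regularity condition} of the form
\begin{equation*}
\langle \nabla\ell_{\rm rw}(\bm{z}),\,\bm{z}-\bm{x}\rangle\ \ge\ \frac{\mu_1}{2}\,\|\bm{z}-\bm{x}\|^2+\frac{\mu_2}{2}\,\|\nabla\ell_{\rm rw}(\bm{z})\|^2
\end{equation*}
holding uniformly for every $\bm{z}$ with ${\rm dist}(\bm{z},\bm{x})\le\frac{1}{10}\|\bm{x}\|$, where $\mu_1,\mu_2>0$ depend on $\beta$ and $\gamma$. Expanding $\|\bm{z}^{t+1}-\bm{x}\|^2$ from the update \eqref{eq:final} gives $\|\bm{z}^{t+1}-\bm{x}\|^2\le(1-\mu\mu_1)\|\bm{z}^t-\bm{x}\|^2+(\mu^2-\mu\mu_2)\|\nabla\ell_{\rm rw}(\bm{z}^t)\|^2$, so choosing $\mu\le\mu_0:=\mu_2$ kills the last term and produces the one-step contraction $\|\bm{z}^{t+1}-\bm{x}\|^2\le(1-\mu\mu_1)\|\bm{z}^t-\bm{x}\|^2$, which yields \eqref{eq:linear} with $1-\nu:=\sqrt{1-\mu_1\mu}$. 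To prove the regularity condition I would split the summands of $\nabla\ell_{\rm rw}(\bm{z})$ into those with \emph{correctly} estimated signs $\mathrm{sgn}(\bm{a}_i^\ast\bm{z})=\mathrm{sgn}(\bm{a}_i^\ast\bm{x})$ and the \emph{sign-flipped} remainder. On the correct set the summand is essentially $w_i(\bm{a}_i^\ast(\bm{z}-\bm{x}))\bm{a}_i$, whose inner product with $\bm{z}-\bm{x}$ concentrates around a positive multiple of $\|\bm{z}-\bm{x}\|^2$ and supplies the curvature. The flipped set is where the reweighting pays off: the confidence weight $w_i=\frac{|\bm{a}_i^\ast\bm{z}|/\psi_i}{|\bm{a}_i^\ast\bm{z}|/\psi_i+\beta_i}$ is small precisely when $|\bm{a}_i^\ast\bm{z}|\ll\psi_i$, which is the regime where sign flips occur, so the adversarial contribution can be bounded by a small fraction of $\|\bm{z}-\bm{x}\|^2$. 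Smoothness, i.e. the $\|\nabla\ell_{\rm rw}(\bm{z})\|^2$ bound, follows from $0\le w_i\le1$ together with the standard operator-norm control of $\frac{1}{m}\sum_i\bm{a}_i\bm{a}_i^\ast$, and all these empirical sums must be shown to concentrate uniformly over the neighborhood via an $\epsilon$-net over the sphere combined with per-point exponential tail bounds.

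The main obstacle I anticipate is controlling the sign-flipped summands inside the regularity condition while the weights $w_i$ themselves depend on $\bm{z}$ in a nonlinear way. Unlike hard truncation \cite{taf}, the smooth weights do not zero out the offending terms outright, so I must quantify exactly how much mass the weighting leaves on the bad set and verify that, for the prescribed range $0\le\beta_i\le10$, this residual is strictly dominated by the curvature harvested from the correct set --- uniformly over the whole basin and, crucially, all the way down to the near-minimal sampling ratio $m\gtrsim n$. Establishing this domination, together with the attendant uniform concentration for $\bm{z}$-dependent weights (which blocks a direct appeal to a fixed-vector Bernstein bound and forces the net argument), is the technical crux of the proof.
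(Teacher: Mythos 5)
Your overall architecture --- a basin-of-attraction initialization followed by a local regularity condition (LRC) that yields geometric contraction --- is exactly the paper's, and your Stage~2 is essentially the paper's own proof. They expand ${\rm dist}^2(\bm{z}-\mu\nabla\ell_{\rm rw}(\bm{z}),\bm{x})$ against the LRC, bound $\|\nabla\ell_{\rm rw}(\bm{z})\|\le(1+\delta)\|\bm{h}\|$ using $w_i\le 1$ together with $\|\bm{A}\|\lesssim\sqrt{m}$, and split the descent inner product over the sign-flip events $\mathcal{D}_i=\{(\bm{a}_i^\ast\bm{x})(\bm{a}_i^\ast\bm{z})<0\}$ (Proposition~\ref{prop:right}), exploiting precisely your observation: on the flipped set with $|\bm{a}_i^\ast\bm{h}|\le(k+1)|\bm{a}_i^\ast\bm{x}|$ one has $\nicefrac{|\bm{a}_i^\ast\bm{z}|}{|\bm{a}_i^\ast\bm{x}|}\le k$, hence $w_i\le \frac{1}{1+\nicefrac{\beta}{k}}$, while reliable summands keep $w_i\ge\frac{1}{1+\beta(1+\eta)}$; uniformity over the basin is inherited from the TAF/RWF lemmas rather than a fresh net argument. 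One caveat: the paper's descent bound is only shown positive for $\beta$ \emph{small enough} (their worked constants use $\beta=2$), so your plan to certify the domination for the full algorithmic range $0\le\beta_i\le 10$ asks for more than the paper actually establishes; expect to shrink $\beta$ in the theory even though the implementation uses $\beta=10$.

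Where you genuinely diverge is Stage~1. You propose the standard spectral-perturbation route: compute $\mathbb{E}[\bm{Y}]$, which by rotational symmetry has the form $\lambda_1\nicefrac{\bm{x}\bm{x}^\ast}{\|\bm{x}\|^2}+\lambda_2(\bm{I}-\nicefrac{\bm{x}\bm{x}^\ast}{\|\bm{x}\|^2})$, prove $\|\bm{Y}-\mathbb{E}[\bm{Y}]\|\le\delta(\lambda_1-\lambda_2)$ after swapping the random order-statistic threshold for a deterministic quantile, and finish with Davis--Kahan. The paper instead uses a deterministic Rayleigh-quotient lemma (Lemma~\ref{lem:beta}): the misalignment of the principal eigenvector of $\bm{B}^\ast\bm{B}$ is at most $\nicefrac{\|\bm{B}\bm{u}\|^2}{\|\bm{B}\bm{x}\|^2}$ over unit $\bm{u}\perp\bm{x}$. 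The numerator is then controlled (Lemma~\ref{lem:up}) with \emph{no} threshold surrogate at all: once $\bm{x}=\bm{e}_1$, the selection $\mathcal{S}$ and the weights $\psi_i^\gamma$ depend only on the first column of $\bm{A}$, so the remaining columns are exactly independent of them and sub-gaussian covariance concentration applies directly; the denominator is lower-bounded (Lemma~\ref{lem:low}) by half-normal order statistics, producing the factor $0.99\cdot 1.14^\gamma|\mathcal{S}|\big[1+\log(\nicefrac{m}{|\mathcal{S}|})\big]$, so the final error $\kappa=\nicefrac{C}{1+\log(\nicefrac{m}{|\mathcal{S}|})}$ is driven below any target purely by enlarging $c_0=\nicefrac{m}{|\mathcal{S}|}$ while $\nicefrac{|\mathcal{S}|}{n}$ stays fixed. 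Your route is more modular and familiar (TWF-style), and it would work, but you still have to evaluate the same order-statistic integrals to quantify the gap $\lambda_1-\lambda_2$, and you additionally owe the bookkeeping for the substitution error between $\mathbb{1}_{\{\psi_i\ge\psi_{[|\mathcal{S}|]}\}}$ and $\mathbb{1}_{\{\psi_i\ge\tau\}}$ (a PSD sandwich between two deterministic thresholds suffices). The paper's route avoids Davis--Kahan, avoids computing $\mathbb{E}[\bm{Y}]$, handles the data-dependent selection exactly where it matters, and makes transparent which knob shrinks the initialization error --- which is why its constants remain serviceable near the information-theoretic limit.
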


According to Theorem \ref{thm:initial}, a few interesting properties of our RAF algorithm are worth highlighting. 
To start, RAF recovers the true solution exactly with high probability whenever the ratio $m/n$ of the number of equations to the unknowns exceeds some numerical constant. Expressed differently, RAF achieves the information-theoretic optimal order of sample complexity, which is consistent with the state-of-the-art including truncated Wirtinger flow (TWF) \cite{twf}, TAF \cite{taf}, and RWF \cite{reshaped}. Notice that the error contraction in \eqref{eq:linear} also holds at $t=0$, namely, ${\rm dist}(\bm{z}^0,\bm{x})\le\nicefrac{ \|\bm{x}\|}{10}$, therefore providing theoretical performance guarantees for the proposed initialization strategy (cf. Step \ref{step:3} of Algorithm \ref{alg:raf}).  
Moreover, starting from this initial estimate, RAF converges exponentially fast to the true solution $\bm{x}$. In other words, to reach any $\epsilon$-relative solution accuracy (i.e., ${\rm dist}(\bm{z}^T,\bm{x})\le \epsilon\|\bm{x}\|$), it suffices to run at most $T=\mathcal{O}(\log\nicefrac{1}{\epsilon})$ RAF iterations in Step \ref{step:4} of Algorithm \ref{alg:raf}. This in conjunction with the per-iteration complexity $\mathcal{O}(mn)$ (namely, the complexity of one reweighted gradient update in \eqref{eq:final}) confirms that RAF solves exactly a quadratic system in time $\mathcal{O}(mn\log \nicefrac{1}{\epsilon})$, which is linear in $\mathcal{O}(mn)$, the time required by the processor to read the entire data $\{(\bm{a}_i;\psi_i)\}_{1\le i\le m}$. Given the fact that the initialization stage can be performed in time $\mathcal{O}(n|\mathcal{S}|)$ and $|\mathcal{S}|< m$, 
the overall linear-time complexity of RAF is order-optimal.     

\section{Simulated Tests}\label{sec:test}

\begin{figure}[t]
	\vspace{-20pt}
	\centering
	\includegraphics[width=0.7\columnwidth, height=7.7cm]{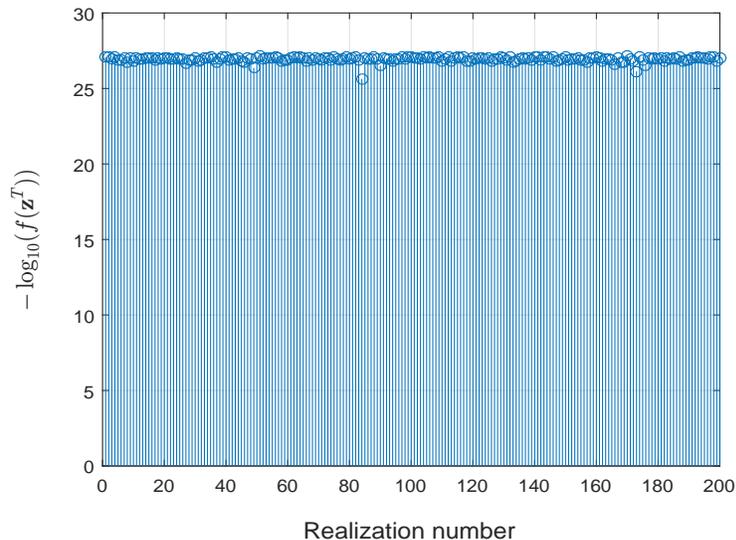}
	\caption{Function value $L(\bm{z}^T)$ evaluated at the returned RAF estimate $\bm{z}^T$ for $200$ trials with $n=2,000$ and $m=2n-1=3,999$.}
	\label{fig:func}
	\vspace{-0pt}
\end{figure}
Our theoretical findings about RAF have been corroborated with comprehensive numerical experiments, a sample of which are discussed next. 
Performance of RAF is evaluated relative to the state-of-the-art (T)WF \cite{wf,twf}, RWF \cite{reshaped}, and TAF \cite{taf} in terms of the empirical success rate among $100$ MC realizations,  
where a success will be declared for an independent trial if the returned estimate incurs error 
$\nicefrac{\|\bm{\psi}-|\bm{A}\bm{z}^T|\|}{\|\bm{x}\|}$
less than $10^{-5}$. 
Both the real Gaussian and the physically realizable CDP models were simulated. 
For fairness, all procedures were implemented with their suggested parameter values. 
We generated the truth $\bm{x}\sim\mathcal{N}(\bm{0},\bm{I})$, and i.i.d. measurement vectors $\bm{a}_i\sim\mathcal{N}(\bm{0},\bm{I})$, $1\le i\le m$. 
Each iterative scheme 
obtained its initial guess based on $200$ power or Lanczos iterations, followed by a sequence of $T=2,000$ (which can be set smaller as the ratio $m/n$ grows away from the limit of $2$) gradient-type iterations. 
For reproducibility, the Matlab implementation of our RAF algorithm is publicly available at \url{https://gangumn.github.io/RAF/}.

To show the power of RAF in the high-dimensional regime, the function value $L(\bm{z})$ in \eqref{eq:ls} evaluated at the returned estimate $\bm{z}^T$ (cf. Step \ref{step:5} of Algorithm \ref{alg:raf}) for $200$ MC realizations
is plotted (in negative logarithmic scale) in Fig. \ref{fig:func}, where the number of simulated noiseless measurements was set to be the information-theoretic limit, namely, 
$m=2n-1=3,999$ for $n=2,000$. It is self-evident that our proposed RAF approach returns a solution of function value $L(\bm{z}^T)$ smaller than $10^{-25}$ in all $200$ independent realizations 
even at this challenging information-theoretic limit condition. To the best of our knowledge, RAF is the first algorithm that empirically reconstructs any high-dimensional (say e.g., $n\ge 1,500$) signals exactly from an \emph{optimal number} of random quadratic equations, which also provides a positive answer to the question posed easier in the Introduction. 

The left panel in Fig.~\ref{fig:ireal} further
compares the empirical success rate of five schemes 
with the signal dimension being fixed at $n=1,000$ while the ratio
$m/n$ increasing by $0.1$ from $1$ to $5$. As clearly depicted by the plots, our RAF (the red plot) enjoys markedly improved performance over its competing alternatives. Moreover, it also achieves $100\%$ perfect signal recovery as soon as $m$ is about $2n$, where the others do not work (well).
To numerically demonstrate the stability and robustness of RAF in the presence of additive noise, the right panel in Fig.~\ref{fig:ireal} examines the normalized mean-square error  ${\rm NMSE}:=\nicefrac{{\rm dist}^2(\bm{z}^T,\bm{x})}{\|\bm{x}\|^2}$ as a function of the signal-to-noise ratio (SNR) for $m/n$ taking values $\{3,4,5\}$. The noise model $\psi_i=|\langle\bm{a}_i,\bm{x}\rangle |+\eta_i$ with $\bm{\eta}:=[\eta_i]_{1\le i\le m}\sim \mathcal{N}(\bm{0},\sigma^2\bm{I}_m)$ was simulated, where $\sigma^2$ was set such that certain ${\rm SNR}:=10\log_{10}(\nicefrac{\|\bm{A}\bm{x}\|^2}{m\sigma^2})$ values were achieved. For all choices of $m$ (as small as $3n$ which is nearly minimal), the numerical experiments illustrate that the NMSE scales inversely proportional to the SNR, which corroborates the stability of our RAF approach.

\begin{figure}[t]
	\centering
	\begin{subfigure}
		\centering
		\includegraphics[width=.50\columnwidth, height=5.9cm]{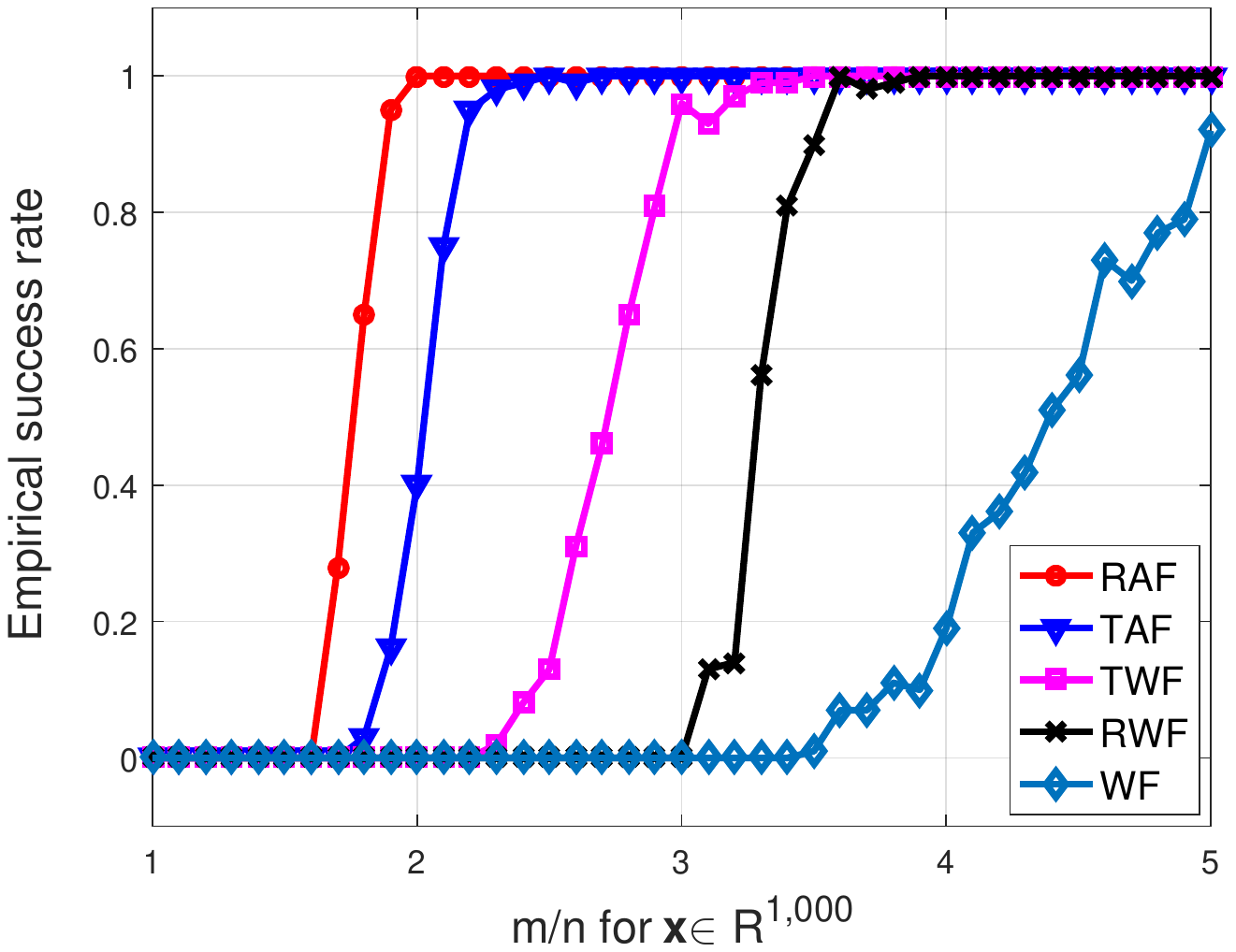}
	\end{subfigure}
	\hspace{-15pt}
	\begin{subfigure}
		\centering
		\includegraphics[width=.51\columnwidth, height=5.8cm]{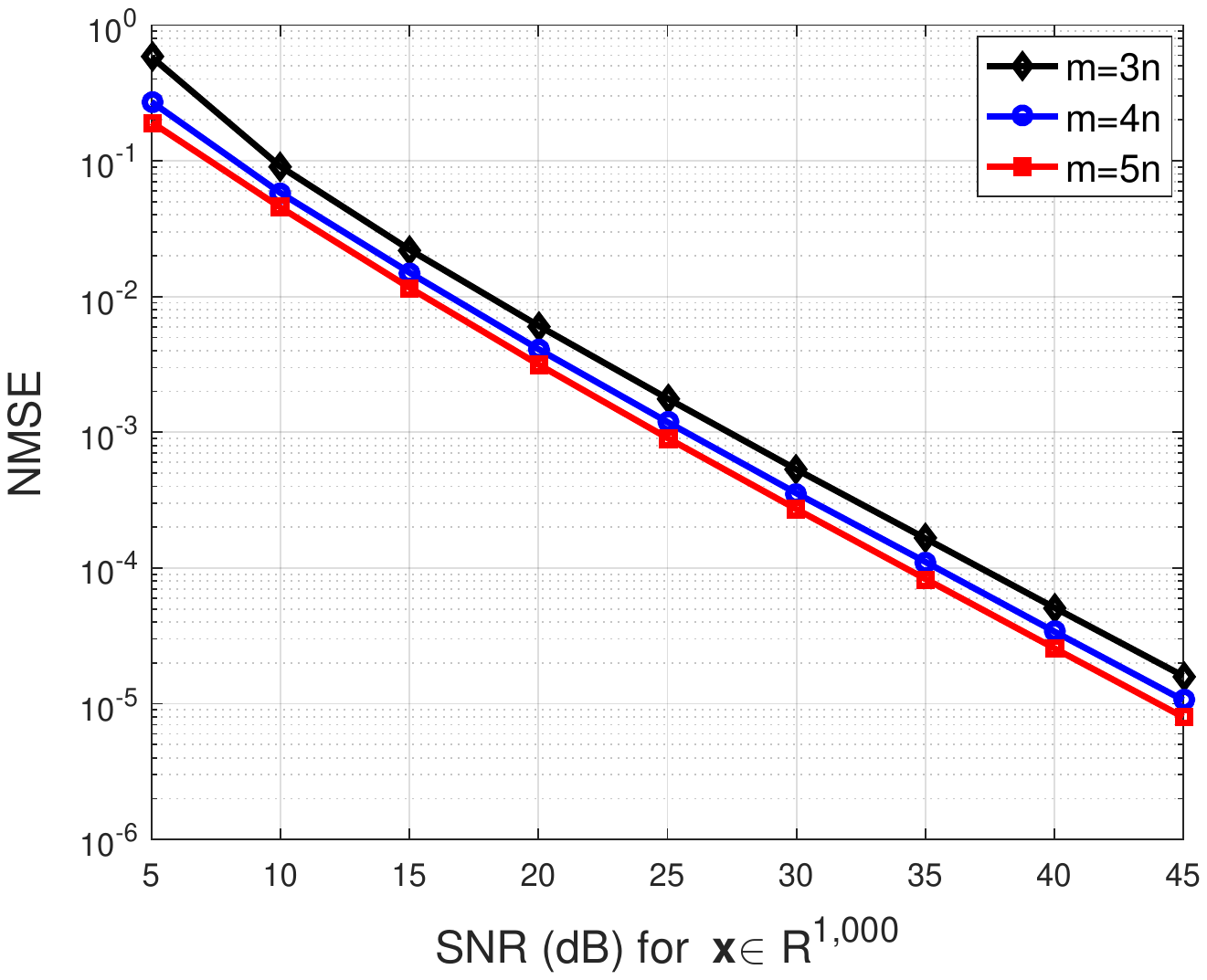}
	\end{subfigure}
	\vspace{-5pt}
	\caption{ Real-valued Gaussian model with $\bm{x}\in\mathbb{R}^{1,000}$: Empirical success rate (Left); and, NMSE vs. SNR (Right).
	}
	\label{fig:ireal}
	\vspace{-5pt}
\end{figure}

To demonstrate the efficacy and scalability of RAF in real-world conditions, the last experiment entails the Galaxy image \footnotemark\footnotetext{Downloaded from \url{http://pics-about-space.com/milky-way-galaxy}.} depicted by a three-way array $\bm{X}\in\mathbb{R}^{1,080\times 1,920\times 3}$, whose first two coordinates encode the pixel locations, and the third the RGB color bands. 
Consider the physically realizable CDP model with random masks~\cite{wf}.
Letting $\bm{x}\in\mathbb{R}^n$ ($n\approx 2\times 10^6$) be a vectorization of a certain band of $\bm{X}$, the CDP model with $K$ masks is
$ \bm{\psi}^{(k)}=|\bm{F}\bm{D}^{(k)}\bm{x}|,~1\le k\le K$,
where $\bm{F}\in\mathbb{C}^{n\times n}$ is a discrete Fourier transform matrix, and diagonal matrices $\bm{D}^{(k)}$ have their diagonal entries sampled uniformly at random from $\{1,-1,j,-j\}$ with $j:=\sqrt{-1}$. 
Implementing $K=4$ masks, each algorithm performs independently over each band $100$ power iterations for an initial guess, which was refined by $100$ gradient iterations.   
Recovered images of TAF (left) and RAF (right) are displayed in Fig.~\ref{fig:milky}, whose relative errors were $1.0347$ and $1.0715\times 10^{-3}$, respectively. WF and TWF returned images of corresponding relative error $1.6870$ and $1.4211$, which are far away from the ground truth.  

Regarding running times, RAF converges faster both in time and in the number of iterations required to achieve certain solution accuracy than TWF and WF in all simulated experiments, and it has comparable efficiency as TAF and RWF. All numerical experiments were implemented with MATLAB R$2016$a on an Intel CPU @ $3.4$ GHz ($32$ GB RAM) computer.



	\begin{figure}[ht]
		\centering
		\vspace{-15pt}
		\includegraphics[height=.24\textheight, width=1\textwidth]{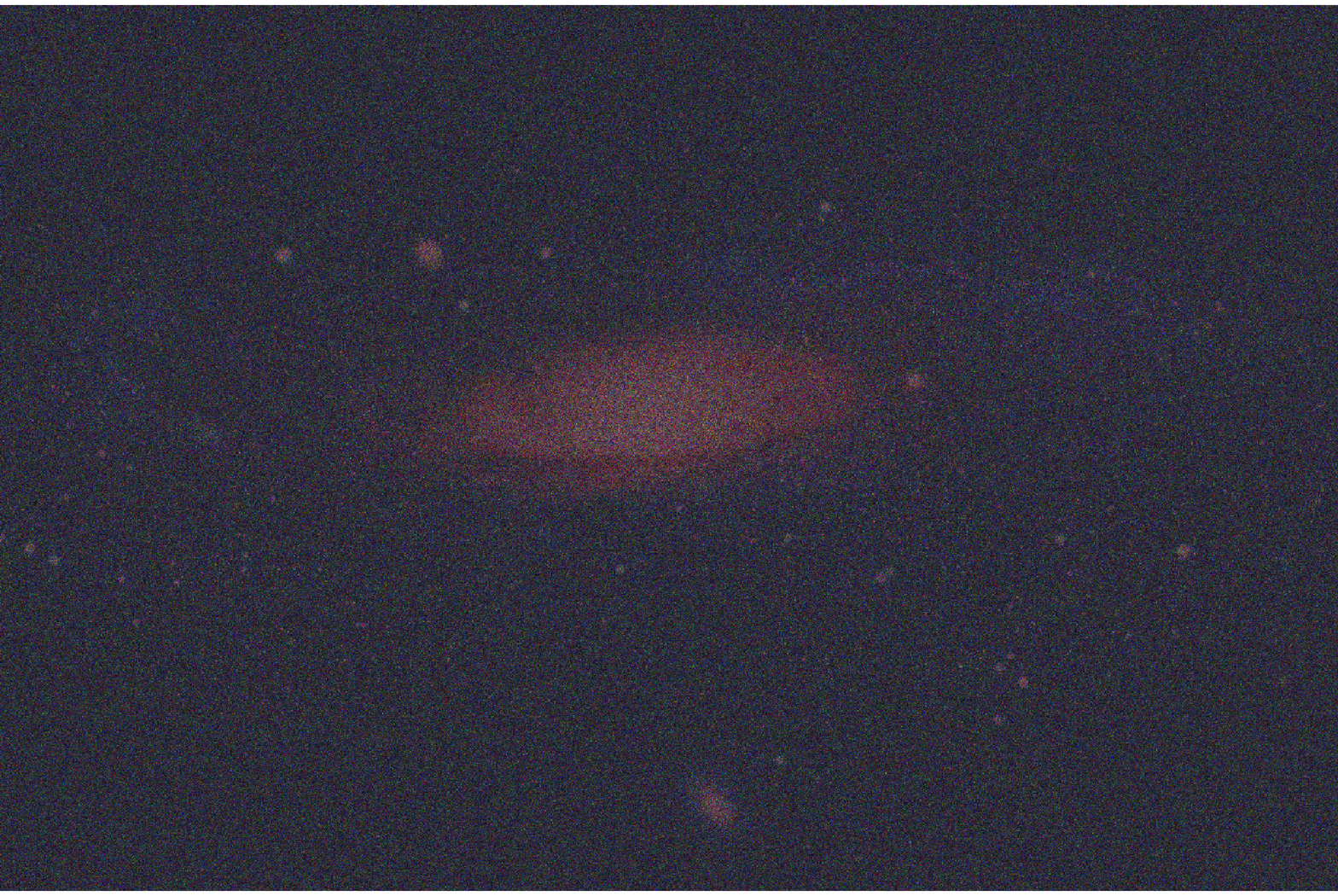} 
		\vspace{-5pt}
		\includegraphics[height=.24\textheight, width=1\textwidth]{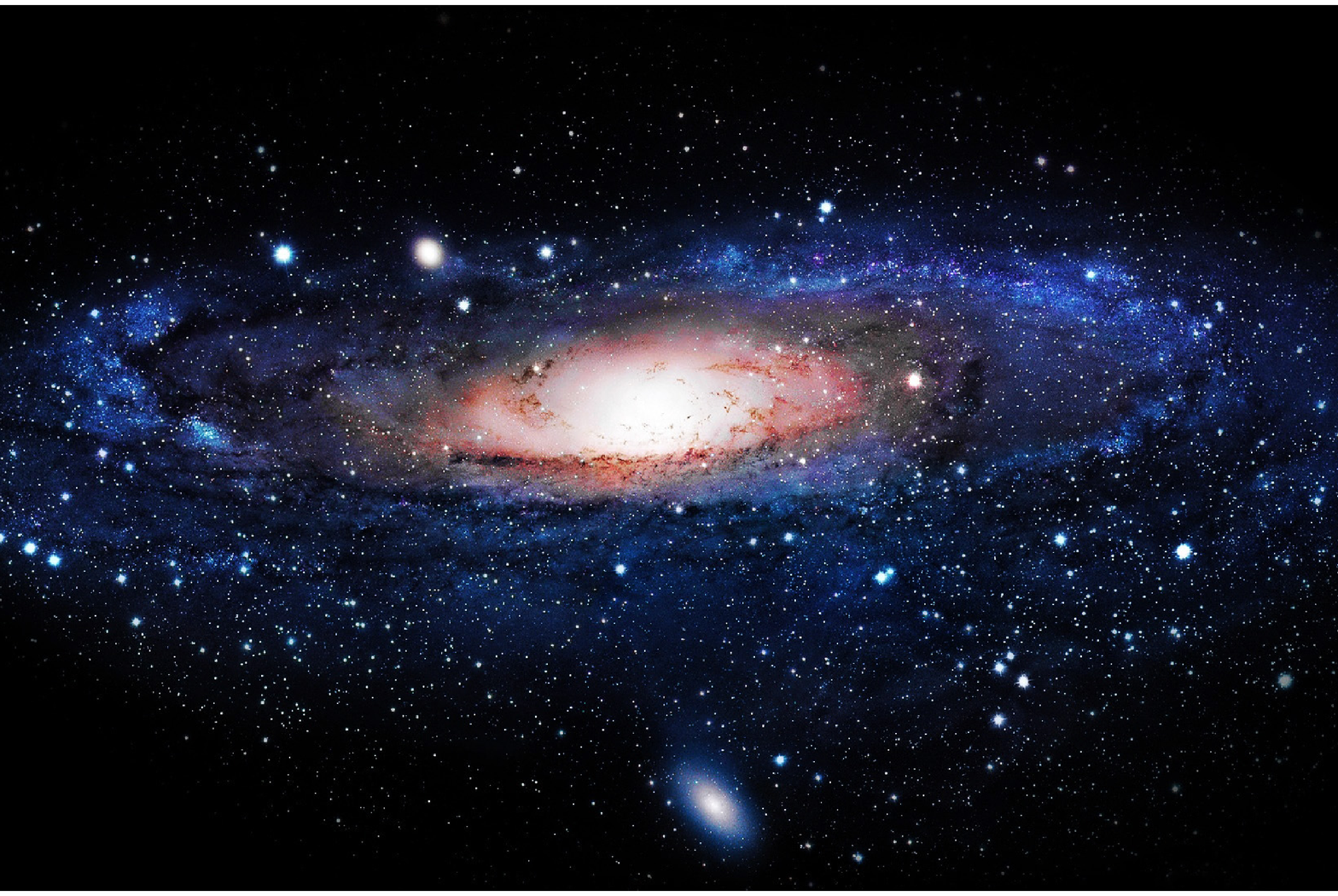}
		\caption{
			The recovered Milky Way Galaxy images after $100$ truncated gradient iterations of TAF (Top); and after $100$ reweighted gradient iterations of RAF (Bottom).
		}
		\label{fig:milky}
		\vspace{-0pt}
	\end{figure}

\section{Proofs}\label{sec:proof}


To prove Theorem~\ref{thm:initial}, 
this section establishes a few lemmas and the main ideas, whereas technical details are postponed to the Appendix for readability. 
 It is clear from Algorithm \ref{alg:raf}
 that the weighted maximal correlation initialization (cf. Step \ref{step:3}) and the reweighted gradient flow (cf. Step \ref{step:4}) distinguish themselves from those procedures in
 (T)WF \cite{wf,twf}, TAF \cite{taf}, and RWF \cite{reshaped}.
Hence, new proof techniques to cope with the weighting in both the initialization and the gradient flow, as well as the nonsmoothness and nonconvexity of the amplitude-based least-squares functional are required. Nevertheless, part of the proof is built upon those in \cite{wf}, \cite{reshaped}, \cite{taf}, \cite{2015chen1}.

The proof of Theorem~\ref{thm:initial} is based on two parts: Section~\ref{subsec:initial} below corroborates guaranteed theoretical performance of the proposed initialization, which essentially achieves any given constant relative error as soon as the number of equations is on the order of the number of unknowns; that is, $m \ge c_1 n$ for some constant $c_1>0$. It is worth mentioning that we reserve $c$ and its subscripted versions for absolute constants, and their values may vary with the context.
Under the sample complexity of order $\mathcal{O}(n)$, Section~\ref{subsec:er} further shows that RAF converges to the true signal $\bm{x}$ exponentially fast whenever the initial estimate lands within a relatively small-size neighborhood of $\bm{x}$ defined by ${\rm dist}(\bm{z}^0,\bm{x})\le (\nicefrac{1}{10}) \|\bm{x}\|$. 


\subsection{Weighted maximal correlation initialization}\label{subsec:initial}

This section is devoted to developing theoretical guarantees for the novel initialization procedure, which is summarized in the following proposition.

\begin{proposition}\label{prop:initial}
	For arbitrary $\bm{x}\in\mathbb{R}^n$, consider the noiseless measurements $\psi_i=|\bm{a}_i^\ast\bm{x}|$, $1\le  i\le m$. 
	If $m\ge c_0|\mathcal{S}|\ge c_1n$, then with probability exceeding $1-c_3{\rm e}^{-c_2m}$, the initial guess $\bm{z}^0$ obtained by the weighted maximal correlation method in Step \ref{step:3} of Algorithm \ref{alg:raf} satisfies
	\begin{equation}\label{eq:i1x}
	{\rm dist}(\bm{z}^0,\bm{x})\le \rho\|\bm{x}\|
	\end{equation}
	for $\rho=\nicefrac{1}{10}$ (or any sufficiently small positive number).		
	Here, $c_0,\,c_1,\,c_2,\,c_3>0$ are some absolute constants.
\end{proposition}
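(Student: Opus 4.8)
The plan is to establish separately the two ingredients that Step~\ref{step:3} combines, namely accuracy of the scaling factor $\sqrt{\nicefrac{1}{m}\sum_i\psi_i^2}$ and alignment of the principal eigenvector $\tilde{\bm{z}}^0$ of $\bm{Y}$ with the direction $\nicefrac{\bm{x}}{\|\bm{x}\|}$. By scale invariance I may assume $\|\bm{x}\|=1$, and by rotational invariance of the i.i.d.\ Gaussian ensemble $\{\bm{a}_i\}$ I may take $\bm{x}=\bm{e}_1$, so that $\psi_i=|a_{i,1}|$ and $\mathcal{S}$ collects the indices of the $|\mathcal{S}|$ largest values of $|a_{i,1}|$. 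The norm estimate is the easy half: since $\psi_i^2=a_{i,1}^2$ are i.i.d.\ sub-exponential with mean $1$, Bernstein's inequality yields $\big|\frac{1}{m}\sum_i\psi_i^2-1\big|\le\delta$ with probability at least $1-2{\rm e}^{-cm\delta^2}$, so the scaling factor lies in $1\pm\delta$. It then remains to control the direction.

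For the direction I would first replace the data-dependent selection by a deterministic one. Let $\tau$ be the $(1-\nicefrac{|\mathcal{S}|}{m})$-quantile of $|g|$, $g\sim\mathcal{N}(0,1)$, i.e.\ $\Pr(|g|\ge\tau)=\nicefrac{|\mathcal{S}|}{m}$; the algorithmic constraint $\nicefrac{|\mathcal{S}|}{m}\le 0.25$ forces $\tau>1$. Define the idealized matrix $\bm{Y}_\tau:=\frac{1}{m}\sum_{i=1}^m|a_{i,1}|^\gamma\bm{a}_i\bm{a}_i^\ast\mathbb{1}\{|a_{i,1}|\ge\tau\}$, whose summands are now \emph{independent}. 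A direct moment computation, exploiting that only the first coordinate is distinguished, shows $\mathbb{E}[\bm{Y}_\tau]$ is diagonal,
\[
\mathbb{E}[\bm{Y}_\tau]=\lambda_1\,\bm{e}_1\bm{e}_1^\ast+\lambda_2\,(\bm{I}-\bm{e}_1\bm{e}_1^\ast),\quad \lambda_1:=\mathbb{E}\big[|g|^{\gamma+2}\mathbb{1}\{|g|\ge\tau\}\big],\;\;\lambda_2:=\mathbb{E}\big[|g|^{\gamma}\mathbb{1}\{|g|\ge\tau\}\big],
\]
with spectral gap $\lambda_1-\lambda_2=\mathbb{E}[|g|^\gamma(g^2-1)\mathbb{1}\{|g|\ge\tau\}]>0$, since the integrand is nonnegative on $\{|g|\ge\tau\}\supseteq\{|g|\ge 1\}$ and strictly positive on $\{|g|>1\}$. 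Thus the population matrix has $\bm{e}_1$ as its unique top eigenvector, separated from the remaining spectrum by a constant gap.

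Next I would prove concentration of the empirical eigenvector, controlling two deviations. First, the genuine set $\mathcal{S}$ differs from $\{i:|a_{i,1}|\ge\tau\}$; but the $|\mathcal{S}|$-th order statistic of $\{|a_{i,1}|\}$ concentrates around $\tau$, so with overwhelming probability $\mathcal{S}$ is sandwiched between the threshold sets for $\tau\pm\delta$, and the associated weighted matrices differ from $\bm{Y}_\tau$ by $\mathcal{O}(\delta)$ in operator norm via a Lipschitz-in-threshold bound. Second, $\bm{Y}_\tau$ concentrates around its mean: writing $\bm{u}^\ast\bm{Y}_\tau\bm{u}=\frac{1}{m}\sum_i|a_{i,1}|^\gamma|\bm{a}_i^\ast\bm{u}|^2\mathbb{1}\{|a_{i,1}|\ge\tau\}$ as a sum of independent sub-exponential variables, Bernstein's inequality handles each fixed $\bm{u}$, and a standard $1/4$-net over $\mathcal{S}^{n-1}$ (of cardinality ${\rm e}^{\mathcal{O}(n)}$) upgrades this to $\|\bm{Y}_\tau-\mathbb{E}[\bm{Y}_\tau]\|\le\delta$ with probability $\ge 1-c_3{\rm e}^{-c_2m}$, provided $m\ge c_1 n$ with $c_1$ large enough to beat the net entropy. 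Combining the two estimates gives $\|\bm{Y}-\mathbb{E}[\bm{Y}_\tau]\|\le\delta'$ on the same event.

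Finally I would invoke the Davis--Kahan $\sin\Theta$ theorem: since $\mathbb{E}[\bm{Y}_\tau]$ has top eigenvector $\bm{e}_1$ with gap $\lambda_1-\lambda_2=\Omega(1)$, the principal eigenvector $\tilde{\bm{z}}^0$ of $\bm{Y}$ obeys ${\rm dist}(\tilde{\bm{z}}^0,\bm{e}_1)\le\nicefrac{2\delta'}{(\lambda_1-\lambda_2)}$, which is made smaller than $\nicefrac{\rho}{2}$ by taking $\nicefrac{m}{n}$ (equivalently $c_1$) large. Scaling by the norm estimate and splitting ${\rm dist}(\bm{z}^0,\bm{x})$ into its direction and magnitude contributions via the triangle inequality then yields ${\rm dist}(\bm{z}^0,\bm{x})\le\rho$, the failure probability being the union of the ${\rm e}^{-c_2m}$-type bounds above. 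I expect the genuinely delicate step to be the first deviation---decoupling the data-dependent top-$|\mathcal{S}|$ selection from the correlated weights $|a_{i,1}|^\gamma$---because the indicator $\mathbb{1}\{i\in\mathcal{S}\}$ depends jointly on all $m$ samples; the threshold-sandwiching, combined with monotonicity of the weighted sum in the threshold, is the device I would use to tame this coupling.
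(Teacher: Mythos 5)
Your overall architecture---reduce to $\bm{x}=\bm{e}_1$, replace the data-dependent selection by a deterministic quantile threshold $\tau$ via sandwiching of the $|\mathcal{S}|$-th order statistic, compute the population matrix $\lambda_1\bm{e}_1\bm{e}_1^\ast+\lambda_2(\bm{I}-\bm{e}_1\bm{e}_1^\ast)$, and finish with Davis--Kahan---is a coherent alternative to the paper's route, and your sandwiching device is essentially the same binomial/order-statistics argument the paper uses inside its Lemma \ref{lem:low}. The gap is in the matrix concentration step, and it is precisely caused by the weighting $\psi_i^\gamma$ that the proposition is about. The summands $|a_{i,1}|^\gamma|\bm{a}_i^\ast\bm{u}|^2\mathbb{1}\{|a_{i,1}|\ge\tau\}$ are \emph{not} sub-exponential when $\gamma>0$: in the worst direction $\bm{u}=\bm{e}_1$ they equal $|a_{i,1}|^{2+\gamma}\mathbb{1}\{|a_{i,1}|\ge\tau\}$, whose tail is of order $\exp(-t^{2/(2+\gamma)}/2)$, i.e., sub-Weibull of exponent $2/(2+\gamma)<1$. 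Consequently the two-sided bound $\|\bm{Y}_\tau-\mathbb{E}[\bm{Y}_\tau]\|\le\delta$ cannot hold with probability $1-c_3{\rm e}^{-c_2m}$: the scalar event $\bm{e}_1^\ast(\bm{Y}_\tau-\mathbb{E}[\bm{Y}_\tau])\bm{e}_1>\delta$ already occurs whenever a single sample satisfies $|a_{i,1}|^{2+\gamma}\ge m(\lambda_1+\delta)$, which has probability on the order of $m\exp\big(-c\,(m\delta)^{2/(2+\gamma)}\big)$, asymptotically far larger than ${\rm e}^{-c_2m}$ for any fixed $c_2$. So Bernstein as you invoke it is unjustified, the best failure probability your route certifies is $\exp(-c\,m^{2/(2+\gamma)})$ (weaker than the proposition's claim), and the same heavy upper tail contaminates even the sharper $\|(\bm{Y}-\mathbb{E}[\bm{Y}_\tau])\bm{e}_1\|$ form of Davis--Kahan, since that column contains the $(1,1)$ deviation.

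The missing idea---and the reason the paper does not use a symmetric perturbation bound---is that the signal direction must only ever be controlled \emph{from below}, where heavy upper tails are harmless (an upward fluctuation of the energy along $\bm{x}$ can only improve alignment). The paper's Lemma \ref{lem:beta} is a one-sided substitute for Davis--Kahan: exploiting that $\tilde{\bm{z}}^0$ is an eigenvector of $\bm{B}^\ast\bm{B}$ (so the cross term $(\bm{z}^0)^\ast\bm{B}^\ast\bm{B}(\bm{z}^0)^\perp$ vanishes exactly) together with its variational optimality, it yields $\sin^2\theta\le\nicefrac{\|\bm{B}\bm{u}\|^2}{\|\bm{B}\bm{x}\|^2}$ for $\bm{u}\perp\bm{x}$, with no off-diagonal block to bound. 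The numerator then involves only columns $2,\dots,n$ of $\bm{A}$, which are independent of the selection $\mathcal{S}$ and of the weights; conditionally on the first column the relevant rows are scaled Gaussians, so an honest ${\rm e}^{-cn}$ bound is available (Lemma \ref{lem:up}). The denominator $\sum_{i\in\mathcal{S}}|a_{[i],1}|^{2+\gamma}$ needs only a lower bound, obtained via $|a_{[i],1}|^{2+\gamma}\ge|a_{[|\mathcal{S}|],1}|^\gamma a_{[i],1}^2$ plus order statistics at rate ${\rm e}^{-cm}$ (Lemma \ref{lem:low}); as a bonus this produces the factor $1+\log(\nicefrac{m}{|\mathcal{S}|})$ that drives the error down. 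If you want to salvage your plan, you must make it one-sided in the same way (e.g., a block bound of the form $\tan\theta\le\|\bm{b}\|/(a-\|\bm{C}\|)$ with $a$ lower-bounded and $\bm{C}$ the orthogonal block), but note the off-diagonal block $\bm{b}$ still carries weights $|a_{i,1}|^{1+\gamma}$ with the same sub-Weibull obstruction; the clean fix is exactly the paper's variational lemma, which eliminates that block altogether.
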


Due to the homogeneity of ~\eqref{eq:i1x}, it suffices to prove the result for the case of $\|\bm{x}\|=1$. Assume first that the norm $\|\bm{x}\|=1$ is also perfectly known, and 
$\bm{z}^0$ has already been scaled such that $\|\bm{z}^0\|=1$. 
At the end of this proof, this approximation error between the actually employed norm estimate $\sqrt{\nicefrac{\sum_{i=1}^m y_i}{m}}$ found based on the strong law of large numbers
and the unknown norm $\|\bm{x}\|=1$ will be taken care of.
For independent Gaussian random measurement vectors $\bm{a}_i\sim\mathcal{N}(\bm{0},\bm{I}_n)$ and arbitrary unit signal vector $\bm{x}$,
there always exists an orthogonal transformation denoted by $\bm{U}\in\mathbb{R}^{n\times n}$ such that $\bm{x}=\bm{U}\bm{e}_1$. Since
\begin{equation}
|\langle\bm{a}_i,\bm{x}\rangle|^2=|\langle\bm{a}_i,\bm{U}\bm{e}_1\rangle|^2=|\langle\bm{U}^\ast\bm{a}_i,\bm{e}_1\rangle|^2\eqdef |\langle\bm{a}_i,\bm{e}_1\rangle|^2
\end{equation}
where $\eqdef$ means random quantities on both sides of the equality have the same distribution, it is thus without loss of generality to work with $\bm{x}=\bm{e}_1$.

Since the norm $\|\bm{x}\|=1$ is assumed known, the weighted maximal correlation initialization in Step \ref{step:3} finds the initial estimate $\bm{z}^0=\tilde{\bm{z}}^0$ (the scaling factor is the exactly known norm $1$ in this case) as the principal eigenvector of 
\begin{equation}
\label{eq:eig}
\bm{Y}:=\frac{1}{|\mathcal{S}|}\bm{B}^\ast\bm{B}=\frac{1}{|\mathcal{S}|}\sum_{i\in\mathcal{S}}\psi_i^\gamma\bm{a}_i\bm{a}_i^\ast
\end{equation}
where $\bm{B}:=\big[\psi_i^{\nicefrac{\gamma}{2}}\bm{a}_i\big]_{i\in\mathcal{S}}$ is an $|\mathcal{S}|\times n$ matrix, and $\mathcal{S}\subsetneqq\{1,2,\ldots,m\}$ includes the indices of the $|\mathcal{S}|$ largest entities among all modulus data $\{\psi_i\}_{1\le i\le m}$.
The following result is a modification of \cite[Lemma 1]{taf}, which is key to proving Proposition~\ref{prop:initial} and 
whose proof can be found in Section \ref{sec:lembeta} in the Appendix. 

\begin{lemma}\label{lem:beta}
	Consider $m$ noiseless measurements $\psi_i=|\bm{a}_i^\ast\bm{x}|$, $1\le  i\le m$. For arbitrary $\bm{x}\in\mathbb{R}^n$ of unity norm, the next result holds for all unit vectors $\bm{u}\in\mathbb{R}^n$ perpendicular to the vector $\bm{x}$; that is, for all vectors $\bm{u}\in\mathbb{R}^n$ satisfying $\bm{u}^\ast\bm{x}=0$ and $\|\bm{u}\|=1$:
	\begin{equation}\label{eq:mse}
	\frac{1}{2}\|\bm{x}\bm{x}^\ast-\bm{z}^0(\bm{z}^0)^\ast\|^2_F\le \frac{\|\bm{B}\bm{u}\|^2}{\|\bm{B}\bm{x}\|^2}
	\end{equation}
	where $\bm{z}^0=\tilde{\bm{z}}^0$ is given by
	\begin{equation}
	\label{eq:maxeig}
	\tilde{\bm{z}}^0:=\arg\max_{\|\bm{z}\|=1}~\frac{1}{|\mathcal{S}|}\bm{z}^\ast\bm{B}^\ast\bm{B}\bm{z}.
	\end{equation}
\end{lemma}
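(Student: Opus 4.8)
The plan is to first convert the left-hand side of \eqref{eq:mse} into a single angle. Since \eqref{eq:maxeig} defines $\bm{z}^0$ as a unit vector and $\bm{x}$ has unit norm, expanding the squared Frobenius norm through traces and using $\|\bm{x}\|=\|\bm{z}^0\|=1$ gives $\frac{1}{2}\|\bm{x}\bm{x}^\ast-\bm{z}^0(\bm{z}^0)^\ast\|_F^2 = 1-\langle\bm{x},\bm{z}^0\rangle^2$. Writing $\bm{z}^0=\cos\theta\,\bm{x}+\sin\theta\,\bm{u}$, where (after a harmless sign flip of $\bm{z}^0$ making $\cos\theta\ge 0$) the vector $\bm{u}$ is the unit vector along the component of $\bm{z}^0$ orthogonal to $\bm{x}$, the left-hand side becomes exactly $\sin^2\theta$. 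It therefore suffices to establish $\sin^2\theta\le\|\bm{B}\bm{u}\|^2/\|\bm{B}\bm{x}\|^2$ for this particular $\bm{u}$, which is the instance of \eqref{eq:mse} that the initialization analysis of Proposition~\ref{prop:initial} will invoke.

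The crux is to exploit that $\bm{z}^0$ is not merely a competitor but the \emph{global maximizer} of the Rayleigh quotient in \eqref{eq:maxeig}. I would restrict attention to the two-dimensional plane $V:=\mathrm{span}\{\bm{x},\bm{u}\}$, which contains $\bm{z}^0$. Because $\bm{B}^\ast\bm{B}\bm{z}^0=\lambda_1\bm{z}^0$ with $\lambda_1=\lambda_{\max}(\bm{B}^\ast\bm{B})$, and $\lambda_1\bm{z}^0\in V$, projecting this eigen-relation onto $V$ shows that $(\cos\theta,\sin\theta)$ is the top eigenvector of the $2\times 2$ positive-semidefinite Gram matrix $\bm{G}:=\left[\begin{smallmatrix}p & c\\ c & q\end{smallmatrix}\right]$, with $p:=\|\bm{B}\bm{x}\|^2$, $q:=\|\bm{B}\bm{u}\|^2$, and $c:=(\bm{B}\bm{x})^\ast(\bm{B}\bm{u})$. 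I want to stress that the weaker estimate obtained from the single inequality $\|\bm{B}\bm{z}^0\|^2\ge\|\bm{B}\bm{x}\|^2$ does not suffice here: it leaves the cross term $c$ uncontrolled and, after Cauchy--Schwarz, only yields $\sin^2\theta\le 4pq/(p+q)^2$, which is strictly weaker than the target $q/p$ whenever $p>q$. Using the full stationarity encoded by the top eigenvector of $\bm{G}$ is what removes this slack.

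Writing out the two scalar eigen-equations for $\bm{G}$ and eliminating $c$ gives $(\lambda_1-p)(\lambda_1-q)=c^2$ together with $\tan^2\theta=(\lambda_1-p)/(\lambda_1-q)$, whence $\sin^2\theta=(\lambda_1-p)/(2\lambda_1-p-q)$. Cauchy--Schwarz supplies $c^2\le pq$, so $(\lambda_1-p)(\lambda_1-q)\le pq$; solving this quadratic inequality for $s:=\lambda_1-p\ge 0$ forces $s\le q$, i.e. $\lambda_1-p\le q$. Since $\sin^2\theta=s/(2s+p-q)$ is nondecreasing in $s$ (for $p\ge q$), substituting the extreme value $s=q$ yields $\sin^2\theta\le q/(p+q)\le q/p=\|\bm{B}\bm{u}\|^2/\|\bm{B}\bm{x}\|^2$, as claimed. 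The degenerate cases are immediate: if $\sin\theta=0$ the bound is trivial, and if $p\le q$ then $q/p\ge 1\ge\sin^2\theta$.

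The main obstacle is precisely the cross-correlation $c=(\bm{B}\bm{x})^\ast(\bm{B}\bm{u})$, since $\bm{u}\perp\bm{x}$ does \emph{not} make $\bm{B}\bm{u}$ orthogonal to $\bm{B}\bm{x}$; the remedy is to invoke the exact two-dimensional top-eigenvector relation rather than a one-sided Rayleigh-quotient comparison, after which the estimate closes by Cauchy--Schwarz and a monotonicity argument. The only remaining gap relative to Proposition~\ref{prop:initial} is that here $\|\bm{x}\|$ and the scaling of $\bm{z}^0$ are taken as known; as announced in the surrounding text, the error between the norm estimate $\sqrt{\sum_{i=1}^m\psi_i^2/m}$ and $\|\bm{x}\|$ is absorbed separately and does not affect the purely directional bound proved here.
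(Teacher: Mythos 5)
Your proof is correct, and while it shares the paper's skeleton --- reduce to the plane $V=\mathrm{span}\{\bm{x},\bm{z}^0\}$, rewrite the left-hand side as $\sin^2\theta$, and exploit that $\bm{z}^0$ is an \emph{eigenvector} of $\bm{B}^\ast\bm{B}$ rather than merely a point whose Rayleigh quotient beats that of $\bm{x}$ --- it closes the estimate by a genuinely different mechanism. The paper works in the frame $\{\bm{z}^0,(\bm{z}^0)^\perp\}$ adapted to the maximizer: there the eigen-relation kills the cross term, since $(\bm{z}^0)^\ast\bm{B}^\ast\bm{B}(\bm{z}^0)^\perp=\lambda_1(\bm{z}^0)^\ast(\bm{z}^0)^\perp=0$, so $\|\bm{B}\bm{x}\|^2$ and $\|\bm{B}\bm{x}^\perp\|^2$ split into two diagonal terms, and the claim follows from a one-line sign check of $\|\bm{B}\bm{x}\|^2\sin^2\theta-\|\bm{B}\bm{x}^\perp\|^2$ combined with $\|\bm{B}(\bm{z}^0)^\perp\|\le\|\bm{B}\bm{z}^0\|$. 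You instead work in the frame $\{\bm{x},\bm{u}\}$ adapted to the truth, where the cross term $c=(\bm{B}\bm{x})^\ast(\bm{B}\bm{u})$ survives, and you eliminate it through the exact $2\times 2$ eigen-equations, namely $c^2=(\lambda_1-p)(\lambda_1-q)$ and $\sin^2\theta=(\lambda_1-p)/(2\lambda_1-p-q)$ with $p:=\|\bm{B}\bm{x}\|^2$ and $q:=\|\bm{B}\bm{u}\|^2$, followed by Cauchy--Schwarz ($c^2\le pq$, hence $\lambda_1-p\le q$) and monotonicity in $s=\lambda_1-p$. Your bookkeeping of degenerate cases is complete: when $p>q$ and $\sin\theta\ne 0$, the facts $\lambda_1\ge p>q$ and $\cos\theta\ne 0$ are automatic, so every division in your derivation is legitimate, and the remaining cases are trivial exactly as you state. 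What your route buys is the marginally sharper intermediate bound $\sin^2\theta\le q/(p+q)$, plus the instructive computation that the one-sided comparison $\|\bm{B}\bm{z}^0\|^2\ge\|\bm{B}\bm{x}\|^2$ alone can only give $4pq/(p+q)^2$, which is indeed weaker than $q/p$ whenever $p>q$ and quantifies why the full eigen-relation is indispensable; what the paper's frame buys is shorter algebra, since no cross term ever has to be controlled.

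One shared caveat: the lemma's quantifier ``for all unit vectors $\bm{u}$ perpendicular to $\bm{x}$'' is not what either argument proves --- for $\bm{u}$ outside the plane $V$, e.g., along a direction in which $\|\bm{B}\bm{u}\|$ is very small while $\bm{z}^0$ is genuinely tilted away from $\bm{x}$, the inequality \eqref{eq:mse} can fail. Like the paper, you establish it only for the particular in-plane $\bm{u}$, and you were right to flag this explicitly: that is the only instance invoked downstream, since Lemma~\ref{lem:up} bounds $\|\bm{B}\bm{u}\|^2$ uniformly over all $\bm{u}\perp\bm{x}$ and is then applied to this specific one.
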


In the sequel, we start proving Proposition~\ref{prop:initial}. 
The first step consists in upper-bounding the quantity on the right-hand-side of~\eqref{eq:mse}. To be specific, this task involves upper bounding its numerator, and lower bounding its denominator, which are summarized in Lemma~\ref{lem:up} and Lemma~\ref{lem:low} and whose proofs are deferred to Section~\ref{sec:proofup} and Section \ref{sec:prooflow} in the Appendix, accordingly.

\begin{lemma}\label{lem:up}
	In the setting of Lemma~\ref{lem:beta}, if $\nicefrac{|\mathcal{S}|}{n}\ge c_4$, then the next 
	\begin{equation}\label{eq:up0}
	\|\bm{B}\bm{u}\|^2\le 1.01\sqrt{\nicefrac{2^\gamma}{\pi}} \Gamma(\nicefrac{\gamma+1}{2})|\mathcal{S}|
	\end{equation}
	holds with probability at least $1-2{\rm e}^{-c_5 n}$, where $\Gamma(\cdot)$ is the Gamma function, and
	 $c_4,\,c_5$ are certain universal constants.
\end{lemma}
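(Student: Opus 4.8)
The plan is to exploit a decoupling between the data-dependent selection set $\mathcal{S}$ and the component of each $\bm{a}_i$ orthogonal to $\bm{x}$. Keeping the reduction $\bm{x}=\bm{e}_1$, write $\bm{a}_i=(a_{i,1},\bm{b}_i)$ with $\bm{b}_i\in\mathbb{R}^{n-1}$, so that $\psi_i=|a_{i,1}|$. Both $\mathcal{S}$ (the indices of the $|\mathcal{S}|$ largest $\psi_i$) and the weights $\psi_i^{\gamma}$ are functions of $\{a_{i,1}\}_{i=1}^m$ only, hence independent of $\{\bm{b}_i\}$. For any fixed unit $\bm{u}\perp\bm{e}_1$ one has $\bm{a}_i^\ast\bm{u}=\bm{b}_i^\ast\bm{u}$, whence $\|\bm{B}\bm{u}\|^2=\sum_{i\in\mathcal{S}}\psi_i^{\gamma}(\bm{b}_i^\ast\bm{u})^2$. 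Conditioning on $\{a_{i,1}\}$ freezes $\mathcal{S}$ and the weights while leaving $\{\bm{b}_i^\ast\bm{u}\}$ i.i.d. $\mathcal{N}(0,1)$; this turns the quantity into a weighted sum of independent $\chi_1^2$ variables with conditional mean $\sum_{i\in\mathcal{S}}\psi_i^{\gamma}$. This decoupling is what lets the random, data-driven weights be treated as deterministic coefficients inside a standard concentration argument.

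Granting the conditioning, I would proceed in two concentration steps. First, for a fixed $\bm{u}$ the summands $\psi_i^{\gamma}(\bm{b}_i^\ast\bm{u})^2$ are conditionally independent sub-exponential variables, so a Bernstein bound gives $\|\bm{B}\bm{u}\|^2\le(1+\delta)\sum_{i\in\mathcal{S}}\psi_i^{\gamma}$ with probability $1-\mathrm{e}^{-\Omega(|\mathcal{S}|)}$ for any small $\delta>0$. Since the perpendicular direction of the eigenvector $\bm{z}^0$ is itself data-dependent and cannot be fixed in advance, the estimate must hold simultaneously for all admissible $\bm{u}$; that is, \eqref{eq:up0} is really an operator-norm bound on $P_{\bm{e}_1^\perp}\bm{B}^\ast\bm{B}P_{\bm{e}_1^\perp}$. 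I would upgrade the pointwise bound to a uniform one by a $\tfrac14$-net over the unit sphere of the $(n-1)$-dimensional subspace $\bm{e}_1^\perp$: the net has cardinality $\mathrm{e}^{O(n)}$, the per-point failure probability $\mathrm{e}^{-\Omega(|\mathcal{S}|)}$ absorbs the union bound precisely because the hypothesis $|\mathcal{S}|/n\ge c_4$ forces $|\mathcal{S}|\gtrsim n$, and the net-to-sphere passage inflates the constant by a factor arbitrarily close to $1$, which can be folded into the $1.01$ slack.

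The remaining---and genuinely delicate---step is to pin down the leading constant, i.e., to control the empirical moment $\tfrac{1}{|\mathcal{S}|}\sum_{i\in\mathcal{S}}\psi_i^{\gamma}$ of the \emph{selected} weights. Because $\mathcal{S}$ collects the largest $|a_{i,1}|$, this is a top-order-statistic average; I would handle it by showing the data-driven selection threshold concentrates around the population quantile $\tau$ (with $\mathbb{P}(|\xi|\ge\tau)\approx|\mathcal{S}|/m$ for a standard Gaussian $\xi$) and then evaluating the corresponding truncated Gaussian moment of $|\xi|^{\gamma}$, reconciling it with $\sqrt{2^{\gamma}/\pi}\,\Gamma(\tfrac{\gamma+1}{2})$ up to the admitted $1.01$ factor. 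This is the main obstacle: the weights are themselves large and random, so the weighted operator norm has heavier tails than in the unweighted ($\gamma=0$) case of \cite{taf}, and keeping the multiplicative constant tight while taking the uniform supremum requires careful bookkeeping of the sub-exponential norms together with the choice of $|\mathcal{S}|/m$. Finally, the $o(1)$ discrepancy between the employed norm estimate $\sqrt{\sum_{i}\psi_i^2/m}$ and the true unit norm---already controlled by the concentration noted at \eqref{eq:normest}---is folded into the constants, completing the argument.
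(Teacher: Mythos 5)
Your decoupling is precisely the paper's: with $\bm{x}=\bm{e}_1$, both the selected set $\mathcal{S}$ and the weights $\psi_i^\gamma$ are functions of the first column of $\bm{A}$ alone, so conditionally on that column the remaining $(n-1)$ coordinates of the selected rows stay i.i.d.\ standard Gaussian. The paper then invokes the concentration inequality for matrices with independent subgaussian rows (\cite[Remark 5.40.1]{chap2010vershynin}) applied to $\frac{1}{|\mathcal{S}|}\bm{B}_r^\ast\bm{B}_r$, which is exactly the packaged form of your Bernstein-plus-$\nicefrac{1}{4}$-net argument, and like you it proves an operator-norm bound valid for all unit $\bm{u}\perp\bm{x}$ so that the data dependence of $\bm{u}$ is immaterial. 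Up to that point the two arguments coincide.

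The genuine gap is your third step, and it is fatal to the constant claimed in \eqref{eq:up0}. The average selected weight $\frac{1}{|\mathcal{S}|}\sum_{i\in\mathcal{S}}\psi_i^\gamma$ indeed concentrates, but around the \emph{conditional} (truncated) moment $\mathbb{E}\big[|\xi|^\gamma \,\big|\, |\xi|\ge\tau\big]$ with $\mathbb{P}(|\xi|\ge\tau)=\nicefrac{|\mathcal{S}|}{m}$, and for $\gamma>0$ this exceeds $C_\gamma:=\sqrt{\nicefrac{2^\gamma}{\pi}}\,\Gamma(\nicefrac{\gamma+1}{2})$ by a factor bounded away from $1$; no bookkeeping can reconcile them to within $1.01$. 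Concretely, every selected index obeys $\psi_i\ge\psi_{[|\mathcal{S}|]}$, and the paper's own threshold estimate in the proof of Lemma~\ref{lem:low} gives $\psi_{[|\mathcal{S}|]}\ge 1.4\,{\rm erfc}^{-1}(\nicefrac{|\mathcal{S}|}{m})\ge 1.14$ with high probability under the working assumption $\nicefrac{|\mathcal{S}|}{m}\le 0.25$; hence $\frac{1}{|\mathcal{S}|}\sum_{i\in\mathcal{S}}\psi_i^\gamma\ge 1.14^\gamma\approx 1.07$ for the default $\gamma=\nicefrac{1}{2}$, whereas $1.01\,C_{1/2}\approx 0.83$. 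Since, conditionally on the first column, $\mathbb{E}\|\bm{B}\bm{u}\|^2=\sum_{i\in\mathcal{S}}\psi_i^\gamma$ for any fixed unit $\bm{u}\perp\bm{e}_1$, your route carried out honestly yields $\|\bm{B}\bm{u}\|^2\le(1+\delta)\,\mathbb{E}\big[|\xi|^\gamma\,\big|\,|\xi|\ge\tau\big]\,|\mathcal{S}|$ --- the same $\mathcal{O}(|\mathcal{S}|)$ order, and still adequate for the downstream ratio $\kappa$ in \eqref{eq:fbound} because the truncated moment grows only like $\log^{\nicefrac{\gamma}{2}}(\nicefrac{m}{|\mathcal{S}|})$ --- but it cannot deliver \eqref{eq:up0} with the stated constant. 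You should also know the paper's own proof is defective at exactly this spot: it asserts $\mathbb{E}[\bm{b}_{r,i}\bm{b}_{r,i}^\ast]=C_\gamma\bm{I}_{n-1}$ for the selected, reweighted rows, thereby endowing the top order statistics $|a_{[i],1}|^\gamma$ with the \emph{unconditional} Gaussian moment; the selection effect you correctly singled out as the main obstacle (and which is absent only in the unweighted case $\gamma=0$ of \cite{taf}) is precisely what it overlooks. So your skeleton matches the paper's, but your step three fails --- and, as written, so does the paper's.
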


\begin{lemma}\label{lem:low}
	In the setting of Lemma~\ref{lem:beta}, the following holds with probability exceeding $1-{\rm e}^{-c_6m}$: 
	\begin{equation}\label{eq:low0}
	\|\bm{B}\bm{x}\|^2\ge 	0.99|\mathcal{S}|\big[1+\log (\nicefrac{m}{|\mathcal{S}|})\big]\ge 0.99\times 1.14^\gamma|\mathcal{S}|\big[1+\log (\nicefrac{m}{|\mathcal{S}|})\big]
	\end{equation}
	provided that $m\ge c_0|\mathcal{S}|\ge c_1n$ for some absolute constants $c_0,\,c_1,\,c_6>0$. 
\end{lemma}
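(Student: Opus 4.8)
The plan is to use the reduction already in force, $\bm{x}=\bm{e}_1$, so that $\psi_i=|\bm{a}_i^\ast\bm{x}|=|a_{i,1}|$ with $a_{i,1}\sim\mathcal{N}(0,1)$ are i.i.d.\ folded-normal variables depending only on the first coordinate of $\bm{a}_i$; in particular $n$ is irrelevant to this lemma (the $c_1n$ condition is inherited from the global setting, not binding here), and everything concentrates at the $m$-scale, matching the target probability $1-\mathrm{e}^{-c_6m}$. Under this reduction $\bm{B}\bm{x}=[\psi_i^{\gamma/2}a_{i,1}]_{i\in\mathcal{S}}$, so the quantity to be bounded is the order-statistic sum
\begin{equation}
\|\bm{B}\bm{x}\|^2=\sum_{i\in\mathcal{S}}\psi_i^{\gamma}\,|a_{i,1}|^2=\sum_{i\in\mathcal{S}}\psi_i^{\gamma+2},
\end{equation}
where $\mathcal{S}$ indexes the $|\mathcal{S}|$ largest values among $\{\psi_i\}_{i=1}^m$. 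First I would peel off the $\gamma$-dependent factor from the second-moment mass. Writing $\tau:=\min_{i\in\mathcal{S}}\psi_i=\psi_{[|\mathcal{S}|]}$ for the smallest selected modulus, every $i\in\mathcal{S}$ satisfies $\psi_i\ge\tau$, hence $\psi_i^{\gamma+2}\ge\tau^{\gamma}\psi_i^2$ and therefore $\|\bm{B}\bm{x}\|^2\ge\tau^{\gamma}\sum_{i\in\mathcal{S}}\psi_i^2$. This splits the proof into two independent tasks: (i) lower-bounding the random threshold $\tau$, and (ii) lower-bounding the truncated second moment $\sum_{i\in\mathcal{S}}\psi_i^2$.

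For task (i), note that $\tau\ge t$ holds exactly when at least $|\mathcal{S}|$ of the $\psi_i$ exceed $t$. Taking the fixed level $t=1.14$ and using $\mathbb{P}(|g|>1.14)=2(1-\Phi(1.14))\approx0.254>3/13\ge|\mathcal{S}|/m$ (with $g\sim\mathcal{N}(0,1)$ and $\Phi$ its cumulative distribution), the count $\#\{i:\psi_i>1.14\}$ is a binomial whose mean strictly exceeds $|\mathcal{S}|$; a Chernoff/Hoeffding bound then gives $\#\{i:\psi_i>1.14\}\ge|\mathcal{S}|$, i.e.\ $\tau\ge1.14$, with probability at least $1-\mathrm{e}^{-cm}$. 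The population $(1-\alpha)$-quantile at $\alpha=3/13$ is $\approx1.20$, so the slack down to $1.14$ comfortably absorbs the fluctuation.

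For task (ii), I would replace the data-dependent set $\mathcal{S}$ by a fixed deterministic threshold. Choosing $t_0$ slightly above the population quantile so that $\mathbb{P}(|g|>t_0)=\alpha'<|\mathcal{S}|/m$, binomial concentration guarantees $\#\{i:\psi_i>t_0\}\le|\mathcal{S}|$ w.h.p., whence the $|\mathcal{S}|$ largest values contain all indices exceeding $t_0$ and $\sum_{i\in\mathcal{S}}\psi_i^2\ge\sum_{i=1}^m\psi_i^2\,\mathbb{1}(\psi_i>t_0)$. The summands are i.i.d.\ truncated $\chi^2_1$ variables, hence sub-exponential, so Bernstein's inequality yields $\sum_{i:\psi_i>t_0}\psi_i^2\ge(1-\epsilon)\,m\,\mathbb{E}[g^2\mathbb{1}(|g|>t_0)]$ with probability $1-\mathrm{e}^{-cm}$. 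The truncated moment is computed by integration by parts, $\mathbb{E}[g^2\mathbb{1}(|g|>t_0)]=2t_0\phi(t_0)+2(1-\Phi(t_0))$, and a short numerical comparison shows this exceeds $0.99\,\alpha\,[1+\log(1/\alpha)]$ for every admissible $\alpha=|\mathcal{S}|/m\le3/13$ (at $\alpha=3/13$ the left side is $\approx0.70$ against a target $\approx0.56$). Absorbing the $(1-\epsilon)$ and the binomial-coupling losses into the leading constant then gives $\sum_{i\in\mathcal{S}}\psi_i^2\ge0.99\,|\mathcal{S}|\,[1+\log(m/|\mathcal{S}|)]$.

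Combining (i) and (ii) by a union bound over the finitely many $\mathrm{e}^{-cm}$-events produces $\|\bm{B}\bm{x}\|^2\ge1.14^{\gamma}\cdot0.99\,|\mathcal{S}|\,[1+\log(m/|\mathcal{S}|)]$; since $1.14^{\gamma}\ge1$ for $\gamma\ge0$, this $\gamma$-dependent bound is the stronger of the two inequalities in \eqref{eq:low0} and dominates the stated $0.99\,|\mathcal{S}|[1+\log(m/|\mathcal{S}|)]$. The main obstacle I anticipate is task (ii): one must carry the explicit constant $0.99$ through a genuinely quantitative chain — coupling the random set $\mathcal{S}$ to a fixed threshold without shedding too much mass, invoking Bernstein with an explicit (not merely asymptotic) deviation, and verifying the truncated-Gaussian-moment inequality with enough numerical margin that both concentration slacks still fit under $0.99$. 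By contrast, the threshold bound (i) and the initial reduction are comparatively routine.
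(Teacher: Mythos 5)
Your proposal is correct, and its skeleton coincides with the paper's: both reduce to $\bm{x}=\bm{e}_1$, write $\|\bm{B}\bm{x}\|^2=\sum_{i\in\mathcal{S}}\psi_i^{2+\gamma}$, peel off the factor $\psi_{[|\mathcal{S}|]}^\gamma$ via $\psi_i^{2+\gamma}\ge \psi_{[|\mathcal{S}|]}^\gamma\psi_i^2$, and then handle the two pieces separately; your task (i) (showing $\psi_{[|\mathcal{S}|]}\ge 1.14$ by counting exceedances of a fixed threshold and applying a Chernoff bound to the binomial count) is the same quantile-concentration argument the paper runs, just packaged with a fixed level $1.14$ rather than the population quantile $\sqrt{2}\,{\rm erfc}^{-1}(|\mathcal{S}|/m)$ minus a deviation $\delta$. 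You also correctly treat the $1.14^\gamma$ bound as the stronger of the two inequalities in \eqref{eq:low0} and derive it first (the ordering in the lemma statement is evidently a typo). The genuine difference is task (ii): the paper does not prove the bound $\sum_{i=1}^{|\mathcal{S}|}a_{[i],1}^2\ge 0.99|\mathcal{S}|\bigl[1+\log(m/|\mathcal{S}|)\bigr]$ at all, but imports it wholesale as Lemma~\ref{lem:lowtaf} (citing \cite[Lemma 3]{taf}), whereas you supply a self-contained proof: couple the random top-$|\mathcal{S}|$ set to a deterministic threshold $t_0$ chosen just above the population quantile (so that the exceedance count is at most $|\mathcal{S}|$ with high probability), apply Bernstein's inequality to the i.i.d.\ sub-exponential truncated variables $\psi_i^2\mathbb{1}(\psi_i>t_0)$, and verify numerically that the truncated moment $2t_0\phi(t_0)+2(1-\Phi(t_0))$ dominates $0.99\,\alpha[1+\log(1/\alpha)]$ on the admissible range of $\alpha=|\mathcal{S}|/m$ (your margins check out: $\approx 0.70$ versus $\approx 0.56$ at $\alpha=3/13$, with the ratio increasing as $\alpha\to 0$ since the truncated moment behaves like $\alpha(1+2\log(1/\alpha))$). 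What your route buys is independence from the external reference and an explicit accounting of where the constant $0.99$ comes from; what the paper's route buys is brevity and the reuse of a result already established, with all quantitative bookkeeping hidden in the citation. Your one stated assumption worth flagging is $|\mathcal{S}|/m\le 3/13$ (from the default parameter choice), which is slightly tighter than the paper's working assumption $|\mathcal{S}|/m\le 0.25$; at $0.25$ your threshold argument in task (i) still closes, but with margin $0.254-0.25$, so the exponent constant degrades accordingly.
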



Taking together the upper bound in \eqref{eq:up0}
and the lower bound in \eqref{eq:low0}, one arrives at
\begin{align}\label{eq:fbound}
\frac{\|\bm{B}\bm{u}\|^2}{\|\bm{B}\bm{x}\|^2}\le \frac{C}{1+\log(\nicefrac{m}{|{\mathcal{S}}|})} \buildrel\triangle\over = 
\kappa
\end{align}
where the constant $C:=1.02\times  1.14^{-\gamma}\sqrt{\nicefrac{2^\gamma}{\pi}}\Gamma(\nicefrac{\gamma+1}{2})$ and
which holds with probability at least $1-2{\rm e}^{-c_5n}-{\rm e}^{-c_6m}$, with the proviso that $m\ge c_0|\mathcal{S}|\ge c_1n$. Since $m=\mathcal(O)(n)$, one can then rewrite the probability as $1-c_{3}{\rm e}^{-c_2m}$ for certain constants $c_2,\,c_3>0$.    
To have a sense of the size of $C$, taking our default value $\gamma=0.5$ for instance gives rise to $C=0.7854$.


It is clear that the bound $\kappa$ in \eqref{eq:fbound} can be rendered arbitrarily small  
by taking sufficiently large $\nicefrac{m}{|\mathcal{S}|}$ values (while maintaining $\nicefrac{|\mathcal{S}|}{n}$ to be some constant based on Lemma \ref{lem:low}). With no loss of generality, let us work with $\kappa:=0.001$ in the following.

The wanted upper bound on the distance between the initialization $\bm{z}^0$ and the truth $\bm{x}$ can be obtained based upon similar arguments found in \cite[Section 7.8]{wf}, which are delineated as follows. For unit vectors $\bm{x}$ and $\bm{z}^0$, 
recall from \eqref{eq:sin} that
\begin{equation}
|\bm{x}^\ast\bm{z}^0|^2=\cos^2\theta=1-\sin^2\theta\ge 1-\kappa,
\end{equation}
where $0\le \theta\le \nicefrac{\pi}{2} $ denotes the angle between the spaces spanned by $\bm{z}^0$ and $\bm{x}$, therefore
\begin{align}\label{eq:inequality1}
{\rm dist}^2(\bm{z}^0,\,\bm{x})&\le \|\bm{z}^0\|^2+\|\bm{x}\|^2-2|\bm{x}^\ast\bm{z}^0|\nonumber\\
&\le\left( 2-2\sqrt{1-\kappa}\right)\left\|\bm{x}\right\|^2\nonumber\\
&\approx \kappa\left\|\bm{x}\right\|^2.
\end{align}

As discussed prior to Lemma~\ref{lem:beta}, 
the exact norm $\|\bm{x}\|=1$ is generally not known, and one often scales the unit directional vector found in \eqref{eq:maxeig} by the estimate $\sqrt{\sum_{i=1}^m \nicefrac{\psi_i^2}{m}}$. 
Next the approximation error between the estimated norm $\|\bm{z}^0\|=\sqrt{\sum_{i=1}^m \nicefrac{\psi_i^2}{m}}$ and the true norm $\|\bm{x}\|=1$ is accounted for. Recall from \eqref{eq:maxeig} that the direction of $\bm{x}$ is estimated to be $\tilde{\bm{z}}^0$ (of unity norm). 
Using similar results in \cite[Lemma 7.8 and Section 7.8]{wf}, the following holds with high probability as long as the ratio $m/n$ exceeds some numerical constant 
\begin{equation}\label{eq:inequality2}
\|\bm{z}^0-\tilde{\bm{z}}^0\|=|\|\bm{z}^0\|-1|\le (\nicefrac{1}{20})\|\bm{x}\|.
\end{equation}
Taking the inequalities in \eqref{eq:inequality1}
and \eqref{eq:inequality2} together, it is safe to conclude that
\begin{align}
{\rm dist}(\bm{z}^0,\bm{x})&\le \|\bm{z}^0-\tilde{\bm{z}}^0\|+{\rm dist}(\tilde{\bm{z}}^0,\bm{x})\le (\nicefrac{1}{10})\|\bm{x}\|
\end{align}
which confirms that the initial estimate obeys the relative error $\nicefrac{{\rm dist}(\bm{z}^0,\,\bm{x})}{\|\bm{x}\|}\le 1/10$ for any $\bm{x}\in\mathbb{R}^n$ with  probability $1-c_3{\rm e}^{-c_2m}$, provided that $m\ge c_0|\mathcal{S}|\ge c_1n$ for some numerical constants $c_0,\,c_1,\,c_2,\,c_3>0$.

\subsection{Exact Phase Retrieval from Noiseless Data}\label{subsec:er}

It has been demonstrated that the initial estimate $\bm{z}^0$ obtained by means of the weighted maximal correlation initialization strategy has at most a constant relative error to the globally optimal solution $\bm{x}$, i.e., ${\rm dist}(\bm{z}^0,\bm{x})\le (\nicefrac{1}{10})\|\bm{x}\|$. We demonstrate in the following that starting from such an initial estimate, the RAF iterates (in Step \ref{step:4} of Algorithm~\ref{alg:raf}) converge at a linear rate to the global optimum $\bm{x}$; that is, ${\rm dist}(\bm{z}^t,\bm{x})\le (\nicefrac{1}{10})c^t\|\bm{x}\|$ for some constant $0<c<1$ depending on the step size $\mu>0$, the weighting parameter $
\beta$, and the data $\{(\bm{a}_i;\psi_i)\}_{1\le i\le m}$. This constitutes the second part of the proof of Theorem~\ref{thm:initial}. 
Toward this end, it suffices to show that the iterative updates of RAF is locally contractive within a relatively small neighboring region of the truth $\bm{x}$. Instead of directly coping with the moments in the weights, we establish a conservative result based directly on \cite{taf} and \cite{reshaped}.
Recall first that our gradient flow uses the reweighted gradient 
\begin{equation}
	\nabla\ell_{\rm rw}(\bm{z}:=\frac{1}{m}\sum_{i=1}^m w_i\left(\bm{a}_i^\ast\bm{z}-\psi_i\frac{\bm{a}_i^\ast\bm{z}}{|\bm{a}_i^\ast\bm{z}|}\right)\bm{a}_i
=\frac{1}{m}\sum_{i=1}^m w_i\left(\bm{a}_i^\ast\bm{z}-|\bm{a}_i^\ast\bm{x}|\frac{\bm{a}_i^\ast\bm{z}}{|\bm{a}_i^\ast\bm{z}|}\right)\bm{a}_i\label{eq:rw}
\end{equation}
with weights
\begin{equation}
\label{eq:weights}
w_i=\frac{1}{1+\nicefrac{\beta}{(\nicefrac{|\bm{a}_i^\ast\bm{z}|}{|\bm{a}_i^\ast\bm{x}|})}},\quad 1\le i\le m
\end{equation}
in which the dependence on the iterate index $t$ is ignored for notational brevity.

\begin{proposition}[Local error contraction] \label{prop:lec}
	For arbitrary $\bm{x}\in\mathbb{R}^n$, consider $m$ noise-free measurements $\psi_i=\left|\bm{a}_i^\ast\bm{x}\right|$, $1\le i\le m$. There exist some numerical constants $c_1,\,c_2,\,c_3>0$, and $0<\nu<1$ such that the following holds with probability exceeding $1-c_3{\rm e}^{-c_2m}$
	\begin{equation}\label{eq:contract}
	{\rm dist}^2\!\left(\bm{z}-\mu\nabla \ell_{\rm rw}(\bm{z}),\,\bm{x}\right)\le (1-\nu){\rm dist}^2\!\left(\bm{z},\,\bm{x}\right)
	\end{equation}
	for all $\bm{x},\,\bm{z}\in\mathbb{R}^n$ obeying ${\rm dist}(\bm{z},\,\bm{x})\le (\nicefrac{1}{10})\|\bm{x}\|$, provided that $m\ge c_1 n$ and 
	that the constant step size $\mu\le \mu_0$, where the numerical constant $\mu_0$ depends on the parameter $\beta>0$ and data $\{(\bm{a}_i;\psi_i)\}_{1\le i\le m}$. 
\end{proposition}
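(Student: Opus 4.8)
The plan is to expand the squared distance after one reweighted gradient step and to control the cross term and the gradient-norm term separately, reducing matters to a local regularity condition (strong convexity plus smoothness) for the reweighted loss. Concretely, writing $\bm{h}:=\bm{z}-\bm{x}$ (after fixing the global sign so that $\|\bm{z}-\bm{x}\|\le\|\bm{z}+\bm{x}\|$), I would start from the identity
\begin{equation}
{\rm dist}^2\!\left(\bm{z}-\mu\nabla\ell_{\rm rw}(\bm{z}),\bm{x}\right)\le\|\bm{h}\|^2-2\mu\,\big\langle\nabla\ell_{\rm rw}(\bm{z}),\bm{h}\big\rangle+\mu^2\big\|\nabla\ell_{\rm rw}(\bm{z})\big\|^2.
\end{equation}
To obtain \eqref{eq:contract} it then suffices to establish a \emph{local regularity condition} (in the sense of \cite{wf,taf,reshaped}): there exist constants $\lambda,\,\Lambda>0$ with $\nu:=2\mu\lambda-\mu^2\Lambda>0$ for all $\mu\le\mu_0:=\lambda/\Lambda$, such that
\begin{equation}
\label{eq:rc}
\big\langle\nabla\ell_{\rm rw}(\bm{z}),\bm{h}\big\rangle\ge\lambda\,\|\bm{h}\|^2,\qquad \big\|\nabla\ell_{\rm rw}(\bm{z})\big\|^2\le\Lambda\,\|\bm{h}\|^2
\end{equation}
uniformly over the neighborhood ${\rm dist}(\bm{z},\bm{x})\le\tfrac{1}{10}\|\bm{x}\|$. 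The Proposition follows immediately by substituting \eqref{eq:rc} into the display above.

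For the first (lower-bound) inequality in \eqref{eq:rc}, the key is the sign-aligned form of the gradient in \eqref{eq:rw}: using $\psi_i=|\bm{a}_i^\ast\bm{x}|$ and the weight $0\le w_i\le 1$, each summand $w_i(\bm{a}_i^\ast\bm{z}-|\bm{a}_i^\ast\bm{x}|\,\mathrm{sgn}(\bm{a}_i^\ast\bm{z}))\bm{a}_i$ splits into a benign part (when the estimated sign $\mathrm{sgn}(\bm{a}_i^\ast\bm{z})$ matches $\mathrm{sgn}(\bm{a}_i^\ast\bm{x})$), whose inner product with $\bm{h}$ is essentially $w_i|\bm{a}_i^\ast\bm{h}|^2$, and a misdirected part from the sign-mismatch set, which must be shown to be small. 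I would leverage the fact, already exploited in \cite{taf}, that the mismatch event $\mathrm{sgn}(\bm{a}_i^\ast\bm{z})\ne\mathrm{sgn}(\bm{a}_i^\ast\bm{x})$ forces $|\bm{a}_i^\ast\bm{x}|\le|\bm{a}_i^\ast\bm{h}|$, so the mismatched contributions are dominated by terms controllable via standard concentration. Because the weights satisfy $0\le w_i\le1$ and, as noted in R3, vanish wherever $\bm{a}_i^\ast\bm{z}=0$, the reweighting can only shrink the spurious contributions relative to the unweighted amplitude flow; hence a conservative bound inherited directly from \cite{taf,reshaped} suffices, which is the route the authors signal (``Instead of directly coping with the moments in the weights, we establish a conservative result''). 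Concretely, I would bound $\langle\nabla\ell_{\rm rw}(\bm{z}),\bm{h}\rangle$ below by the corresponding quantity for a worst-case weight pattern and then invoke the concentration of $\tfrac{1}{m}\sum_i|\bm{a}_i^\ast\bm{h}|^2\ge(1-\delta)\|\bm{h}\|^2$ (valid with probability $1-c_3\mathrm{e}^{-c_2 m}$ once $m\ge c_1 n$) to extract a strictly positive $\lambda$ on the neighborhood.

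The second (upper-bound) inequality in \eqref{eq:rc} is the easier one: by the triangle inequality and $w_i\le1$,
\begin{equation}
\big\|\nabla\ell_{\rm rw}(\bm{z})\big\|\le\frac{1}{m}\sum_{i=1}^{m}\big|\bm{a}_i^\ast\bm{z}-|\bm{a}_i^\ast\bm{x}|\,\mathrm{sgn}(\bm{a}_i^\ast\bm{z})\big|\,\|\bm{a}_i\|,
\end{equation}
and a short argument (again mirroring \cite{taf}) shows the integrand is at most $2|\bm{a}_i^\ast\bm{h}|$ in magnitude, reducing the bound to $\Lambda\|\bm{h}\|^2$ via the operator-norm concentration $\big\|\tfrac{1}{m}\sum_i\bm{a}_i\bm{a}_i^\ast\big\|\le 1+\delta$. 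I expect the \textbf{main obstacle} to be the lower bound $\langle\nabla\ell_{\rm rw}(\bm{z}),\bm{h}\rangle\ge\lambda\|\bm{h}\|^2$, and specifically controlling the sign-mismatch set uniformly over \emph{all} $\bm{z}$ in the neighborhood: since the weights $w_i$ depend on $\bm{z}$ in a nonlinear, nonsmooth way, one cannot appeal to a fixed random matrix but must establish the bound uniformly, presumably via an $\epsilon$-net over the neighborhood combined with a Lipschitz-in-$\bm{z}$ estimate for the reweighted gradient, together with a careful accounting that the $\beta$-dependence of $w_i$ keeps $\lambda$ bounded away from zero. The dependence of $\mu_0$ on $\beta$ enters precisely through the resulting constants $\lambda(\beta)$ and $\Lambda(\beta)$ in $\mu_0=\lambda/\Lambda$.
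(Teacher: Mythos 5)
Your reduction of the claim to a regularity condition (a curvature lower bound $\langle\nabla\ell_{\rm rw}(\bm{z}),\bm{h}\rangle\ge\lambda\|\bm{h}\|^2$ plus a smoothness upper bound on $\|\nabla\ell_{\rm rw}(\bm{z})\|$), followed by expanding ${\rm dist}^2(\bm{z}-\mu\nabla\ell_{\rm rw}(\bm{z}),\bm{x})$, is exactly the paper's route, and you correctly flag the curvature bound as the hard part. However, your plan for that bound rests on a premise that fails. You argue that since $0\le w_i\le 1$, ``the reweighting can only shrink the spurious contributions,'' so the bound can be ``inherited directly'' from TAF/RWF after passing to a worst-case weight pattern and invoking the unrestricted concentration $\frac{1}{m}\sum_i|\bm{a}_i^\ast\bm{h}|^2\ge(1-\delta)\|\bm{h}\|^2$. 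The problem is that the same weights multiply the benign terms $w_i(\bm{a}_i^\ast\bm{h})^2$ that must supply the positive curvature, and $w_i=\big(1+\beta\,|\bm{a}_i^\ast\bm{x}|/|\bm{a}_i^\ast\bm{z}|\big)^{-1}$ has no uniform positive lower bound: it tends to $0$ as $|\bm{a}_i^\ast\bm{z}|\to 0$, and indices with tiny $|\bm{a}_i^\ast\bm{z}|$ are precisely ones where $(\bm{a}_i^\ast\bm{h})^2$ is large. Hence a worst-case weight pattern yields $\lambda=0$, and the unrestricted concentration cannot rescue this, since it says nothing about how much of $\|\bm{h}\|^2$ survives on the subset where the weights are actually bounded below. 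The paper's proof (Proposition \ref{prop:right} with Lemmas \ref{le:1sttermraf}, \ref{le:2ndtermraf}, and \ref{le:1sttermtaf}) supplies exactly this missing ingredient: it restricts to the events $\mathcal{E}_i=\{|\bm{a}_i^\ast\bm{z}|/|\bm{a}_i^\ast\bm{x}|\ge 1/(1+\eta)\}$, on which $w_i\ge 1/(1+\beta(1+\eta))$, and then invokes the nontrivial event-restricted concentration imported from TAF, $\frac{1}{m}\sum_i(\bm{a}_i^\ast\bm{h})^2\mathbb{1}_{\mathcal{E}_i}\ge(1-\zeta_1-\epsilon)\|\bm{h}\|^2$. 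Moreover, on the mismatch set the crude bound $w_i\le 1$ alone is not sharp enough to leave the final constant positive over the relevant $\beta$ range; the paper splits mismatches into moderate ones, $|\bm{a}_i^\ast\bm{h}|\le(k+1)|\bm{a}_i^\ast\bm{x}|$, where the ratio bound gives $w_i\le 1/(1+\beta/k)$, and gross but rare ones, each controlled through Lipschitz surrogates of the indicators and Bernstein concentration. This balancing of $\beta$ across the benign and spurious terms is what your ``careful accounting'' would have to reproduce; naming the obstacle is not the same as overcoming it.

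Your smoothness bound also fails as written. After the termwise triangle inequality $\|\nabla\ell_{\rm rw}(\bm{z})\|\le\frac{1}{m}\sum_i|v_i|\,\|\bm{a}_i\|$ with $|v_i|\le 2|\bm{a}_i^\ast\bm{h}|$, the cancellation across summands is destroyed: since $\|\bm{a}_i\|\approx\sqrt{n}$ under the Gaussian model, this route produces $\Lambda$ of order $n$, hence $\mu_0$ of order $1/n$ rather than a numerical constant, and the operator-norm concentration $\|\frac{1}{m}\sum_i\bm{a}_i\bm{a}_i^\ast\|\le 1+\delta$ can no longer be applied once absolute values have been pulled inside the sum. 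The fix, which is the paper's argument, is to keep the matrix structure: write $\nabla\ell_{\rm rw}(\bm{z})=\frac{1}{m}{\rm diag}(\bm{w})\bm{A}\bm{v}$ as in \eqref{eq:eqav}, so that $\|\nabla\ell_{\rm rw}(\bm{z})\|\le\frac{1}{m}\|{\rm diag}(\bm{w})\|\,\|\bm{A}\|\,\|\bm{v}\|\le\frac{1+\delta'}{\sqrt{m}}\|\bm{v}\|$, and then bound $\|\bm{v}\|^2\le\sum_i(\bm{a}_i^\ast\bm{h})^2\le(1+\delta'')^2m\|\bm{h}\|^2$, giving $\|\nabla\ell_{\rm rw}(\bm{z})\|\le(1+\delta)\|\bm{h}\|$ with dimension-free constants.
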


Proposition~\ref{prop:lec} suggests that the distance of RAF's successive iterates to the global optimum $\bm{x}$ decreases monotonically once the algorithm's iterate $\bm{z}^t$ enters a small neighboring region around the truth $\bm{x}$. This small-size neighborhood is commonly known as the \emph{basin of attraction}, and has been widely discussed in recent nonconvex optimization contributions; see e.g., \cite{twf}, \cite{reshaped}, \cite{taf}. Expressed differently, RAF's iterates will stay within the region and will be attracted towards $\bm{x}$ exponentially fast as soon as it lands within the basin of attraction. To substantiate Proposition~\ref{prop:lec}, recall the useful analytical tool of the local regularity condition~\cite{wf}, which plays a key role in establishing linear convergence of iterative procedures to the global optimum in \cite{wf}, \cite{twf}, \cite{reshaped}, \cite{taf}, \cite{staf}, \cite{gu2017}, \cite{mtwf}.

For RAF, the reweighted gradient $\nabla\ell_{\rm rw}(\bm{z})$ in  \eqref{eq:rw} 
is said to obey the local regularity condition, or ${\rm LRC}(\mu,\lambda,\epsilon)$ for some constant $\lambda>0$, if the following inequality
\begin{align}
\label{eq:lrc}
\left\langle\nabla\ell_{\rm rw}(\bm{z}),\,\bm{h}\right\rangle \ge \frac{\mu}{2}\left\|\nabla\ell_{\rm rw}(\bm{z})\right\|^2+\frac{\lambda}{2}\left\|\bm{h}\right\|^2
\end{align}
holds for all $\bm{z}\in\mathbb{R}^n$ such that $\left\|\bm{h}\right\|=\left\|\bm{z}-\bm{x}\right\|
\le \epsilon\left\|\bm{x}\right\|$ for some constant $0<\epsilon<1$, where the ball given by $\left\|\bm{z}-\bm{x}\right\|\le \epsilon\left\|\bm{x}\right\|$ is the so-termed \emph{basin of attraction}.

Realizing $\bm{h}:=\bm{z}-\bm{x}$, algebraic manipulations in conjunction with the regularity property \eqref{eq:lrc} confirms
 \begin{align}
{\rm dist}^2\!\left(\bm{z}-{\mu}\nabla\ell_{\rm rw}(\bm{z}),\,\bm{x}\right)&=\left\|\bm{z}-\mu\nabla\ell_{\rm rw}(\bm{z})-\bm{x}
\right\|^2\nonumber\\
&=\left\|\bm{h}\right\|^2-2\mu\left\langle\bm{h},\nabla
\ell_{\rm rw}(\bm{z})\right\rangle+\left\|\mu\nabla\ell_{\rm rw}(\bm{z})\right\|^2\label{eq:lastterm}\\
&\le \left\|\bm{h}\right\|^2-2\mu\left( \frac{\mu}{2}\left\|\nabla\ell_{\rm rw}(\bm{z})\right\|^2+\frac{\lambda}{2}\left\|\bm{h}\right\|^2\right)+\left\|\mu\nabla\ell_{\rm rw}(\bm{z})\right\|^2\nonumber\\
&=\left(1-\lambda\mu\right)\left\|\bm{h}\right\|^2=\left(1-\lambda\mu\right){\rm dist}^2(\bm{z},\,\bm{x})\label{eq:lrc1}
\end{align}
for all points $\bm{z}$ adhering to $\left\|\bm{h}\right\|\le \epsilon\left\|\bm{x}\right\|$. 
It is self-evident that if the regularity condition ${\rm LRC}(\mu,\lambda,\epsilon)$ can be established for RAF, our ultimate goal of proving the local error contraction in \eqref{eq:contract} follows straightforwardly upon setting $\nu:=\lambda\mu$.

\subsubsection{Proof of the local regularity condition in~\eqref{eq:lrc}}

The first step of proving the local regularity condition in~\eqref{eq:lrc} is to control the size of the reweighted gradient $\nabla\ell_{\rm rw}(\bm{z})$, i.e., to upper bound the last term in~\eqref{eq:lastterm}. To start, let rewrite the reweighted gradient in a compact matrix-vector representation
\begin{equation}
\nabla\ell_{\rm rw}(\bm{z})=\frac{1}{m}\sum_{i=1}^mw_i\left(\bm{a}_i^\ast\bm{z}-|\bm{a}_i^\ast\bm{x}|\frac{\bm{a}_i^\ast\bm{z}}{\left|\bm{a}_i^\ast\bm{z}\right|}\right)\bm{a}_i\buildrel\triangle\over =\frac{1}{m}{\rm diag}(\bm{w})\bm{A}\bm{v}\label{eq:eqav}
\end{equation}
where ${\rm diag}(\bm{w})\in\mathbb{R}^{n\times n}$ is a diagonal matrix holding in order entries of $\bm{w}:=[w_1~\cdots~w_m]^\ast\in\mathbb{R}^m$ on its main diagonal, and 
$\bm{v}:=[v_1~\cdots~v_m]^\ast\in\mathbb{R}^m$ with $v_i:=\bm{a}_i^\ast\bm{z}-|\bm{a}_i^\ast\bm{x}|\frac{\bm{a}_i^\ast\bm{z}}{|\bm{a}_i^\ast\bm{z}|}$.   
Based on the definition of the induced matrix $2$-norm (or the matrix spectral norm), it is easy to check that
\begin{align}
\|\nabla\ell_{\rm rw}(\bm{z})\|&=\left\|\frac{1}{m}{\rm diag}(\bm{w})\bm{A}\bm{v}\right\|\nonumber\\&\le \frac{1}{m}\|{\rm diag}(\bm{w})\|\cdot\|\bm{A}\|\cdot\|\bm{v}\|\nonumber\\
&\le \frac{1+\delta'}{\sqrt{m}}\|\bm{v}\|
\label{eq:ineqvi}
\end{align}
where we have used the inequalities $\|{\rm diag}(\bm{w})\|\le 1$ due to $w_i\le 1$ for all $1\le i\le m$, and $\|\bm{A}\|\le (1+\delta')\sqrt{m}$ for some constant $\delta'>0$ according to \cite[Theorem 5.32]{chap2010vershynin}, provided that $m/n$ is sufficiently large.

The task therefore remains to bound $\|\bm{v}\|$ in \eqref{eq:ineqvi}, which is addressed next. To this end, notice that 
\begin{align}\label{eq:vbound}
\|\bm{v}\|^2=\sum_{i=1}^m\!\left(\bm{a}_i^\ast\bm{z}-|\bm{a}_i^\ast\bm{x}|\frac{\bm{a}_i^\ast\bm{z}}{|\bm{a}_i^\ast\bm{z}|}\right)^2&\le\sum_{i=1}^m\left(|\bm{a}_i^\ast\bm{z}|-|\bm{a}_i^\ast\bm{x}|\right)^2\nonumber\\
&\le \sum_{i=1}^m\left(\bm{a}_i^\ast\bm{z}-\bm{a}_i^\ast\bm{x}\right)^2\nonumber\\
&=\sum_{i=1}^m(\bm{a}_i^\ast\bm{h})^2\le (1+\delta'')^2m\|\bm{h}\|^2
\end{align}
for some numerical constant $\delta''>0$,
where the last can be obtained using \cite[Lemma 3.1]{phaselift} and which holds with probability at least $1-{\rm e}^{-c_2m}$ as long as $m>c_1n$ holds true.

Combing the results in \eqref{eq:ineqvi} and \eqref{eq:vbound} and taking $\delta>0$ larger than the constant $(1+\delta')(1+\delta'')-1$, the size of $\nabla\ell_{\rm rw}(\bm{z})$ can be bounded as follows
\begin{equation}
\label{eq:gradbound}
\|\nabla\ell_{\rm rw}(\bm{z})\|\le (1+\delta)\|\bm{h}\|
\end{equation}  
which holds with probability $1-{\rm e}^{-c_2m}$,
with a proviso that $m/n$ exceeds some numerical constant $c_7>0$.
This result indeed asserts that the reweighted gradient of the objective function $L(\bm{z})$
or the search direction employed in our RAF algorithm is well behaved, implying that the function value along the iterates does not change too much.

In order to prove the LRC, it suffices to show that the reweighted gradient $\nabla\ell_{\rm rw}(\bm{z})$ ensures sufficient descent, 
that is, there exists a numerical constant $c>0$ such that along the search direction $\nabla \ell_{\rm rw}(\bm{z})$ the following uniform lower bound holds
\begin{equation}
\left\langle\nabla
\ell_{\rm rw}(\bm{z}),\,\bm{h}\right\rangle \ge c \|\bm{h}\|^2
\end{equation} 
which will be addressed next. Formally, this can be summarized in the following proposition, whose proof is deferred to Appendix \ref{prop:right}.

\begin{proposition}\label{prop:right}
	Fixing any sufficiently small constant $\epsilon>0$, consider the noise-free measurements $\psi_i=|\bm{a}_i^\ast\bm{x}|$, $1\le i\le m$. There exist some numerical constants $c_1,\,c_2,\,c_3>0$ such that the following holds with probability at least $1-c_3{\rm e}^{-c_2m}$:
	\begin{equation}\label{eq:innerproduct}
	\left\langle\bm{h},\nabla	\ell_{\rm rw}(\bm{z})\right\rangle \ge \left[\frac{1-\zeta_1-\epsilon}{1+\beta(1+\eta)}-2(\zeta_2+\epsilon )-\frac{2(0.1271-\zeta_2+\epsilon)}{1+\nicefrac{\beta}{k}}
	\right]\|\bm{h}\|^2
	\end{equation} 
	for all $\bm{x},\,\bm{z}\in\mathbb{R}^n$ obeying $\|\bm{h}\|\le \frac{1}{10}\|\bm{x}\|$, provided that $m/n>c_1$, and that $\beta\ge 0$ is small enough.
\end{proposition}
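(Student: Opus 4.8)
The plan is to establish the uniform lower bound in \eqref{eq:innerproduct} by expanding the inner product $\langle\bm{h},\nabla\ell_{\rm rw}(\bm{z})\rangle$ into a weighted sum over the $m$ measurements, then controlling the expectation (population-level quantity) of each term and invoking concentration to pass from expectation to the empirical average. Writing $\bm{h}=\bm{z}-\bm{x}$ and substituting the reweighted gradient from \eqref{eq:rw}, I would obtain
\[
\langle\bm{h},\nabla\ell_{\rm rw}(\bm{z})\rangle=\frac{1}{m}\sum_{i=1}^m w_i\Big[(\bm{a}_i^\ast\bm{h})(\bm{a}_i^\ast\bm{z})-|\bm{a}_i^\ast\bm{x}|\frac{\bm{a}_i^\ast\bm{z}}{|\bm{a}_i^\ast\bm{z}|}(\bm{a}_i^\ast\bm{h})\Big].
\]
The standard device here is to split each summand according to whether the sign of $\bm{a}_i^\ast\bm{z}$ agrees with that of $\bm{a}_i^\ast\bm{x}$. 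On the ``good'' set where the signs match, the bracketed term reduces essentially to $(\bm{a}_i^\ast\bm{h})^2$ and contributes positively; on the ``bad'' set (mismatched signs, which occur only when $\bm{a}_i$ is nearly orthogonal to $\bm{x}$) the term can be negative and must be controlled as an error. The quantities $\zeta_1,\zeta_2$ in the statement should be identified as the (small) bounds on the measure and contribution of these mismatched directions, with $\epsilon$ absorbing the concentration slack.

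\textbf{Key steps in order.} First I would fix the weights at their current-iterate values and use $0\le w_i\le 1$ together with the structural bound $w_i\ge \tfrac{1}{1+\beta(1+\eta)}$ valid on the event that the confidence ratio $|\bm{a}_i^\ast\bm{z}|/|\bm{a}_i^\ast\bm{x}|$ is bounded below, and $w_i\le \tfrac{1}{1+\beta/k}$ when that ratio is at most some $k$; these two pointwise sandwich bounds on $w_i$ are exactly what produce the two denominators $1+\beta(1+\eta)$ and $1+\beta/k$ appearing in \eqref{eq:innerproduct}. Second, I would isolate the dominant positive term $\tfrac{1}{m}\sum_i w_i(\bm{a}_i^\ast\bm{h})^2$ and lower-bound it using the lower weight bound and a concentration estimate for $\tfrac{1}{m}\sum_i(\bm{a}_i^\ast\bm{h})^2\approx\|\bm{h}\|^2$; this yields the leading $\tfrac{1-\zeta_1-\epsilon}{1+\beta(1+\eta)}\|\bm{h}\|^2$ contribution. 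Third, I would bound the cross terms arising from the sign-mismatch set: these are the terms contributing $-2(\zeta_2+\epsilon)$ and $-\tfrac{2(0.1271-\zeta_2+\epsilon)}{1+\beta/k}$, where the numerical constant $0.1271$ should emerge from an explicit Gaussian integral quantifying the expected contribution of mismatched (nearly orthogonal) directions, exactly as in the TAF analysis \cite{taf}. Throughout, I would lean on the already-cited tools: \cite[Lemma~3.1]{phaselift} for $\sum_i(\bm{a}_i^\ast\bm{h})^2$, and standard $\epsilon$-net plus Bernstein/sub-exponential concentration arguments to make each empirical average uniformly close to its expectation over the low-dimensional neighborhood $\|\bm{h}\|\le\tfrac{1}{10}\|\bm{x}\|$.

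\textbf{The main obstacle} I anticipate is handling the coupling between the data-dependent weights $w_i$ and the sign-mismatch indicator simultaneously, because $w_i$ is itself a nonlinear function of the random ratio $|\bm{a}_i^\ast\bm{z}|/|\bm{a}_i^\ast\bm{x}|$ that also governs whether the signs match. A naive bound that treats $w_i$ and the gradient summand independently loses too much. The clean way around this is the sandwich strategy already indicated: rather than computing $\mathbb{E}[w_i(\cdot)]$ exactly, replace $w_i$ by its extreme values ($\tfrac{1}{1+\beta(1+\eta)}$ on the bulk, $\tfrac{1}{1+\beta/k}$ on the tail set where mismatches live) so that the weight factors out of each regime and the remaining expectations reduce to the familiar TAF/RWF integrals. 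A secondary difficulty is the uniformity over all $\bm{z}$ in the basin of attraction (and hence over all $\bm{h}$): because $\nabla\ell_{\rm rw}$ is nonsmooth in $\bm{z}$, I would obtain the bound pointwise with exponentially high probability and then extend to all $\bm{z}$ by a covering-net argument, using the Lipschitz-type control \eqref{eq:gradbound} on $\|\nabla\ell_{\rm rw}(\bm{z})\|$ already established to bound the discretization error. Once \eqref{eq:innerproduct} holds, choosing $\beta$ small enough renders the bracketed coefficient a strictly positive constant $c$, which combined with the gradient-norm bound \eqref{eq:gradbound} yields the regularity condition \eqref{eq:lrc} and thus Proposition~\ref{prop:lec}.
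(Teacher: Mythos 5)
Your proposal follows essentially the same route as the paper: the sign-mismatch decomposition of $\langle\bm{h},\nabla\ell_{\rm rw}(\bm{z})\rangle$, the sandwich bounds $w_i\ge\frac{1}{1+\beta(1+\eta)}$ on the event that the confidence ratio exceeds $\frac{1}{1+\eta}$ and $w_i\le\frac{1}{1+\nicefrac{\beta}{k}}$ on the bounded mismatch regime (with $w_i\le 1$ on the extreme tail), and the reduction of the remaining expectations to the TAF/RWF Gaussian integrals plus sub-exponential concentration and a covering argument---this is exactly how the paper factors the weights out so that its Lemmas~\ref{le:1sttermraf} and \ref{le:2ndtermraf} apply. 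The only slight imprecision is your gloss that $\zeta_1$ measures the mismatched directions: in the paper $\zeta_1$ arises from restricting the positive term to the event $\mathcal{E}_i=\{\nicefrac{|\bm{a}_i^\ast\bm{z}|}{|\bm{a}_i^\ast\bm{x}|}\ge\frac{1}{1+\eta}\}$ (via \cite[Lemma 5]{taf}), while only $\zeta_2$ and the constant $0.1271$ quantify the mismatch set, but this does not affect the correctness of the mechanics you describe.
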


Taking the results in  \eqref{eq:innerproduct} and \eqref{eq:gradbound} together back to \eqref{eq:lrc}, one concludes that the local regularity condition holds for $\mu$ and $\lambda$ obeying the following
\begin{align}
\frac{1-\zeta_1-\epsilon}{1+\beta(1+\eta)}-2(\zeta_2+\epsilon )-\frac{2(0.1271-\zeta_2+\epsilon)}{1+\nicefrac{\beta}{k}}
\ge \frac{\mu}{2}(1+\delta)^2+\frac{\lambda}{2}.
\end{align}
For instance, take $\beta=2$, 
$k=5$, $\eta=0.5$, and $\epsilon=0.001$, we have $\zeta_1=0.8897$ and $\zeta_2=0.0213$, thus confirming that 
$
\langle\ell_{\rm rw}(\bm{z}),\bm{h}\rangle\ge 0.1065\|\bm{h}\|^2.
$ Setting further $\delta=0.001$ leads to
\begin{equation}
0.1065\ge 0.501\mu+0.5\lambda
\end{equation}
which concludes the proof of the local regularity condition in \eqref{eq:lrc}. The local error contraction in \eqref{eq:contract} follows directly from substituting the local regularity condition into \eqref{eq:lrc1}, hence validating Proposition \ref{prop:lec}.


\section{Conclusions}\label{sec:con}
This paper put forth a linear-time algorithm 
termed reweighted amplitude flow (RAF) 
for solving systems of random quadratic equations. Our novel procedure effects two consecutive stages, namely, a weighted maximal correlation initialization that is attainable based upon a few power or Lanczos iterations, and a sequence of simple iteratively reweighted generalized gradient iterations for the nonconvex nonsmooth least-squares loss function. Our RAF approach is conceptually simple, easy-to-implement, as well as numerically scalable and effective. It was also demonstrated to achieve the optimal sample and computational complexity orders. 
Substantial numerical tests
using both synthetic data and real images
corroborated the superior performance of RAF over state-of-the-art iterative solvers. Empirically, RAF solves a set of random quadratic equations in the high-dimensional regime with large probability so long as a unique solution exists, where the number $m$ of equations in the real-valued Gaussian case can be as small as $2n-1$ with $n$ being the number of unknowns to be recovered. 

Future research extensions include studying robust and/or sparse phase retrieval and (semi-definite) matrix recovery by means of (stochastic) reweighted amplitude flow counterparts \cite{sparta,duchi2017,mtwf,tit2015ccg,acha2015krt}. Exploiting the possibility of leveraging suitable (re)weighting regularization to improve empirical performance of other nonconvex iterative procedures such as \cite{duchi2017}, \cite{mtwf}, \cite{gu2017}
is worth investigating as well.


\subsection*{Acknowledgments}

The authors would like to thank John C. Duchi for his helpful feedback on our initialization.


\appendix

\section{Proof details}
By homogeneity of \eqref{eq:quad}, it suffices to work with the case where $\|\bm{x}\|=1$. 

\subsection{Proof of Lemma \ref{lem:beta}}
\label{sec:lembeta}
It is easy to check that 
\begin{align}
\frac{1}{2}\big\|\bm{x}\bm{x}^\ast-{\bm{z}}^0(\bm{z}^0)^\ast\big\|_F
^2&=\frac{1}{2}\|\bm{x}\|^4+\frac{1}{2}\|{\bm{z}}^0\|^4-|\bm{x}^\ast{\bm{z}}^0|^2\nonumber\\
&=1-|\bm{x}^\ast{\bm{z}}^0|^2\nonumber\\
&=1-\cos^2\theta\label{eq:ineq1}
\end{align}
where $0\le \theta\le \nicefrac{\pi}{2}$ denotes the angle between the hyperplanes spanned by $\bm{x}$ and $\bm{z}^0$. Letting $(\bm{z}^0)^\perp\in \mathbb{R}^n$ be a unit vector orthogonal to ${\bm{z}}^0$ and have a nonnegative inner-product with $\bm{x}$, then $\bm{x}$ can be uniquely expressed as a linear communication of $\bm{z}^0$ and $(\bm{z}^0)^\perp$, yielding 
\begin{equation}
\label{eq:orth1}
\bm{x}={\bm{z}}^0\cos\theta+({\bm{z}}^0)^{\perp}\sin\theta.
\end{equation}
Likewise, introduce the unit vector $\bm{x}^{\perp}$ to be orthogonal to $\bm{x}$ and to have a nonnegative inner-product with $(\bm{z}^0)^\perp$. Therefore, $\bm{x}^\perp$ can be uniquely written as 
\begin{equation}
\label{eq:orth2}
\bm{x}^{\perp}:=-\bm{z}^0\sin\theta+(\bm{z}^0)^{\perp}\cos\theta.
\end{equation}

Recall from \eqref{eq:maxeig} (after ignoring the normalization factor $\nicefrac{1}{|\mathcal{S}|}$) that $\bm{z}^0$ is the solution to the principal component analysis (PCA) problem
\begin{align}\label{eq:maxeig1}
{\bm{z}}^0:=\arg\max_{\|\bm{z}\|=1}~&~\bm{z}^\ast\bm{B}^\ast\bm{B}\bm{z}.
\end{align}		
Therefore, it holds that
$\bm{B}^\ast\bm{B}\bm{z}^0
=\lambda_{1}{\bm{z}}^0$, where $\lambda_1>0$ is the largest eigenvalue of $\bm{B}^\ast\bm{B}$. 
Multiplying \eqref{eq:orth1} and \eqref{eq:orth2} by $\bm{B}$ from the left gives rise to  
\begin{subequations}\label{eq:prem}
	\begin{align}
	\bm{B}	\bm{x}&=	\bm{B}\bm{z}^0\cos\theta+	\bm{B}(\bm{z}^0)^{\perp}\sin\theta\label{eq:prem1},\\
	\bm{B}	\bm{x}^{\perp}&=-	\bm{B}\bm{z}^0\sin\theta+	\bm{B}(\bm{z}^0)^{\perp}\cos\theta\label{eq:prem2}.
	\end{align}	
\end{subequations}
Taking the $2$-norm square of both sides in \eqref{eq:prem1} and \eqref{eq:prem2}
yields  
\begin{subequations}\label{eq:prem}
	\begin{align}
	\|\bm{B}\bm{x}\|^2&=\|\bm{B}\bm{z}^0\|^2\cos^2\theta+\|\bm{B}(\bm{z}^0)^{\perp}\|^2\sin^2\theta\label{eq:prem11},\\
	\|\bm{B}\bm{x}^\perp\|^2&=\|\bm{B}\bm{z}^0\|^2\sin^2\theta+\|\bm{B}(\bm{z}^0)^{\perp}\|^2\cos^2\theta\label{eq:prem21},
	\end{align}	
\end{subequations}
where the cross-terms disappear due to $(\bm{z}^0)^\ast\bm{B}^\ast\bm{B}(\bm{z}^0)^\perp=\lambda_1(\bm{z}^0)^\ast(\bm{z}^0)^\perp=0$ according to the definition of $(\bm{z}^0)^\perp$.

With the relationships established in \eqref{eq:prem},
construct now the following
\begin{align*}
&\|\bm{B}\bm{x}\|^2\sin^2\theta-\|\bm{B}\bm{x}^\perp\|^2\\
&=(\|\bm{B}\bm{z}^0\|^2\cos^2\theta+\|\bm{B}(\bm{z}^0)^{\perp}\|^2\sin^2\theta)\sin^2\theta
-(\|\bm{B}\bm{z}^0\|^2\sin^2\theta+\|\bm{B}(\bm{z}^0)^{\perp}\|^2\cos^2\theta)\\
&=\big(
\|\bm{B}\bm{z}^0\|^2\cos^2\theta-\|\bm{B}\bm{z}^0\|^2+\|\bm{B}(\bm{z}^0)^\perp\|^2\sin^2\theta \big)\sin^2\theta-\|\bm{B}(\bm{z}^0)^\perp \|^2\cos^2\theta\\
&=\big(\|\bm{B}(\bm{z}^0)^\perp\|^2-\|\bm{B}\bm{z}^0\|^2\big)\sin^4\theta-\|\bm{B}(\bm{z}^0)^\perp\|^2\cos^2\theta\\
&\le 0
\end{align*}
where $\bm{B}^\ast\bm{B}\succeq \bm{0}$, so $\|\bm{B}(\bm{z}^0)^\perp\|^2-\|\bm{B}\bm{z}^0\|^2\le 0$ holds for any unit vector $(\bm{z}^0)^\perp\in\mathbb{R}^n$ because $\bm{z}^0$ maximizes the term in \eqref{eq:maxeig},  
hence yielding
\begin{equation}\label{eq:sin}
\sin^2\theta=1-\cos^2\theta \le \frac{\|\bm{B}\bm{x}^\perp\|^2}{\|\bm{B}\bm{x}\|^2}.
\end{equation}
Plugging \eqref{eq:ineq1} into above, \eqref{eq:mse} follows directly from setting
$\bm{u}=\bm{x}^\perp$.

\subsection{Proof of Lemma \ref{lem:up}}\label{sec:proofup}

Let $\{\bm{b}_i^\ast\}_{1\le i\le |\mathcal{S}|}$ denote rows of $\bm{B}\in\mathbb{R}^{|\mathcal{S}|\times n}$, which are obtained by scaling rows of  $\bm{A}_{\mathcal{S}}:=\{\bm{a}_i^\ast\}_{i\in\mathcal{S}}\in\mathbb{R}^{|\mathcal{S}|\times n}$ by weights $\{w_i=\psi_i^{\nicefrac{\gamma}{2}}\}_{i\in\mathcal{S}}$ [cf. \eqref{eq:eig}]. Since $\bm{x}=\bm{e}_1$, then $\bm{\psi}=|\bm{A}\bm{e}_1|=|\bm{A}_{ 1}|$, namely, the index set $\mathcal{S}$ depends solely on the first column of $\bm{A}$, and is independent of the other columns of $\bm{A}$. 
In this direction, 
partition accordingly $\bm{A}^{\mathcal{S}}:=[\bm{A}^{\mathcal{S}}_1~\bm{A}^{\mathcal{S}}_r]$, where $\bm{A}^{\mathcal{S}}_1\in\mathbb{R}^{|\mathcal{S}|\times 1}$ denotes the first column of $\bm{A}^{\mathcal{S}}$, and $\bm{A}^{\mathcal{S}}_r\in\mathbb{R}^{|\mathcal{S}|\times (n-1)}$ collects the remaining ones. Likewise, partition $\bm{B}=[\bm{B}_1~\bm{B}_r]$ with $\bm{B}_1\in\mathbb{R}^{|\mathcal{S}|\times 1}$ and $\bm{B}_r\in\mathbb{R}^{|\mathcal{S}|\times (n-1)}$. By the argument above, rows of $\bm{A}^\mathcal{S}$ are mutually independent, and they follow i.i.d. Gaussian distribution with mean $\bm{0}$ and covariance matrix $\bm{I}_{n-1}$. Furthermore, the weights $\psi_{i}^{\nicefrac{\gamma}{2}}=|\bm{a}_{i}^\ast\bm{e}_1|^{\nicefrac{\gamma}{2}}=|a_{i,1}|^{\nicefrac{\gamma}{2}}$, $\forall i\in\mathcal{S}$ are also independent of the entries in $\bm{A}^\mathcal{S}$.
As a consequence, rows of $
\bm{B}_r$ are mutually independent of each other, and one can explicitly write its $i$-th row as $\bm{b}_{r,i}=|\bm{a}_{[i]}^\ast\bm{e}_1|^{\nicefrac{\gamma}{2}}\bm{a}_{[i],\backslash 1}=|a_{[i],1}|^{\nicefrac{\gamma}{2}}
\bm{a}_{[i],\backslash 1}$, where $\bm{a}_{[i],\backslash 1}\in\mathbb{R}^{n-1}$ is obtained through removing the first entry of $\bm{a}_{[i]}$. It is easy to verify that $\mathbb{E}[\bm{b}_{r,i}]=\bm{0}$, and $\mathbb{E}[\bm{b}_{r,i}\bm{b}_{r,i}^\ast]=C_\gamma\bm{I}_{n-1}$, where the constant $C_\gamma:=\sqrt{\nicefrac{2^\gamma}{\pi}} \Gamma(\nicefrac{\gamma+1}{2}) \|\bm{x}\|^{\gamma}=\sqrt{\nicefrac{2^\gamma}{\pi}} \Gamma(\nicefrac{\gamma+1}{2}) $, and $\Gamma(\cdot)$ is the Gamma function. 

Given
$\bm{x}^\ast\bm{x}^\perp=\bm{e}_1^\ast\bm{x}^\perp=0$, one can write $\bm{x}^\perp=[0~\bm{r}^\ast]^\ast$ with any unit vector $\bm{r}\in\mathbb{R}^{n-1}$, hence
\begin{equation}\label{eq:su}
\|\bm{B}\bm{x}^\perp\|^2
=\|\bm{B}[0~\bm{r}^\ast]^\ast\|^2=\|\bm{B}_r\bm{r}\|^2
\end{equation}
with independent subgaussian rows $\bm{b}_{r,i}=|a_{j,1}|^{\nicefrac{\gamma}{2}}\bm{a}_{j,\backslash 1}$ if $0\le \gamma\le 1$.
Standard concentration results on the sum of random positive semi-definite matrices composed of independent non-isotropic subgaussian rows~\cite[Remark 5.40.1]{chap2010vershynin} assert that 
\begin{equation}\label{eq:iso}
\left\|\frac{1}{|\mathcal{S}|}\bm{B}_r^\ast\bm{B}_r-C_\gamma\bm{I}_{n-1} \right\|\le \delta
\end{equation}
holds with probability at least $1-2{\rm e}^{-c_5 n}$ provided that $\nicefrac{|\mathcal{S}|}{n}$ is larger than some positive constant. Here,
$\delta>0$ is a numerical constant that can take arbitrarily small values, and $c_5>0$ is a constant depending on $\delta$. 
With no loss of generality, take $\delta:=0.01C_\gamma$ in~\eqref{eq:iso}. For any unit vector $\bm{r}\in\mathbb{R}^{n-1}$, the following holds with probability at least $1-2{\rm e}^{-c_5 n}$ 
\begin{equation}
\left\|\frac{1}{|\mathcal{S}|}\bm{r}^\ast\bm{B}_r^\ast\bm{B}_r\bm{r}-C_\gamma\bm{r}^\ast\bm{r} \right\|\le \delta \bm{r}^\ast\bm{r}=\delta
\end{equation}
or
\begin{equation}
\left\|\bm{B}_r\bm{r}\right\|^2=\bm{r}^\ast\bm{B}_r^\ast\bm{B}_r\bm{r}\le 1.01C_\gamma|\mathcal{S}|.
\end{equation}
Taking the last back to \eqref{eq:su} confirms that
\begin{equation}\label{eq:up}
\|\bm{B}\bm{x}^\perp\|^2\le1.01C_\gamma|\mathcal{S}|
\end{equation}
holds with probability at least $1-2{\rm e}^{-c_5 n}$ if $\nicefrac{|\mathcal{S}|}{n}$ exceeds some constant. Note that $c_5$ depends on the maximum subgaussian norm of the rows  $\bm{b}_i$ in $\bm{B}_r$, and we assume without loss of generality $c_5\ge 1/2$. Therefore, one confirms that the numerator $\|\bm{B}\bm{u}\|^2$ in~\eqref{eq:mse}
is upper bounded via replacing $\bm{x}^\perp$ with $\bm{u}$ in~\eqref{eq:up}.

\subsection{Proof of Lemma \ref{lem:low}}\label{sec:prooflow}

This section is devoted to obtaining a meaningful lower bound for the denominator $\|\bm{B}\bm{x}\|^2$ in~\eqref{eq:low0}. Note first that 
$$\|\bm{B}\bm{x}\|^2=\sum_{i=1}^{|\mathcal{S}|}\|\bm{b}_i^\ast\bm{x}\|^{2}=\sum_{i=1}^{|\mathcal{S}|}\psi_{[i]}^\gamma|\bm{a}_{[i]}^\ast\bm{x}|^2=\sum_{i=1}^{|\mathcal{S}|}|\bm{a}_{[i]}^\ast\bm{x}|^{2+\gamma}.
$$
Taking without loss of generality $\bm{x}=\bm{e}_1$, the term on the right side of the last equality reduces to 
\begin{equation}
\label{eq:lowerb}
\|\bm{B}\bm{x}\|^2=\sum_{i=1}^{|\mathcal{S}|}|a_{[i],1}|^{2+\gamma}.
\end{equation}
Since $a_{[i],1}$ follows the standard normal distribution, the probability density function (pdf) of random variables $|a_{[i],1}|^{2+\gamma}$ can be given in closed form as 
\begin{equation}\label{eq:pdf1}
p(t)=\sqrt{\frac{2}{\pi}}\cdot\frac{1}{2+\gamma} t^{-\frac{1+\gamma}{2+\gamma}}{\rm e}^{-\frac{1}{2}t^{\frac{2}{2+\gamma}}},\quad t> 0
\end{equation}
which is rather complicated and whose cumulative density function (cdf) 
does not come in closed-form in general. Therefore, instead of dealing with the pdf in \eqref{eq:pdf1} directly, we shall take a different route by deriving a lower bound that is a bit looser yet suffices for our purpose, which is detailed as follows.  

Since $|a_{[|\mathcal{S}|],1}|\le \cdots\le |a_{[2],1}|\le |a_{[1],1}|$, then it holds for all $1\le i\le |\mathcal{S}|$ that $|a_{[i],1}|^{2+\gamma}\ge |a_{[|\mathcal{S}|],1}|^\gamma a_{[i],1}^2$, therefore yielding
\begin{equation}\label{eq:bbb}
\|\bm{B}\bm{x}\|^2=\sum_{i=1}^{|\mathcal{S}|}|a_{[i],1}|^{2+\gamma}\ge |a_{[|\mathcal{S}|],1}|^\gamma\sum_{i=1}^{|\mathcal{S}|}a_{[i],1}^{2}.
\end{equation}
Hence, we next demonstrate that deriving a lower bound for $\|\bm{B}\bm{x}\|^2$ suffices to derive a lower bound for the summation on the right hand side above. The latter can be achieved by appealing to a result in \cite[Lemma 3]{taf}, which for completeness is included in the following.
\begin{lemma}\label{lem:lowtaf}
	For arbitrary unit vector $\bm{x}\in\mathbb{R}^n$, 
	let $\psi_i=|\bm{a}_i^\ast\bm{x}|$, $1\le i\le m$ be $m$ noiseless measurements.
	Then with probability at least $1-{\rm e}^{-c_2m}$, the following holds:
	\begin{equation}\label{eq:low0taf}
	\sum_{i=1}^{|\mathcal{S}|} a_{[i],1}^2
	\ge {0.99|\mathcal{S}|}\big[1+\log (\nicefrac{m}{|\mathcal{S}|})\big]
	\end{equation}
	provided that $m\ge c_0|\mathcal{S}|\ge c_1 n$ for some numerical constants $c_0,\,c_1,\,c_2>0$. 
\end{lemma}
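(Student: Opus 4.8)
The plan is to observe that, after the reduction to $\bm{x}=\bm{e}_1$ already established above, the scalars $g_i:=a_{i,1}$ are i.i.d.\ standard Gaussians and $\{a_{[i],1}\}$ are simply their magnitude-ordered values; hence $n$ never truly enters this essentially one-dimensional statement, and the left side of~\eqref{eq:low0taf} is just the sum of the $|\mathcal{S}|$ largest among $g_1^2,\dots,g_m^2$. I would begin from the deterministic variational identity $\sum_{i=1}^{|\mathcal{S}|}a_{[i],1}^2=\max_{|\mathcal{I}|=|\mathcal{S}|}\sum_{i\in\mathcal{I}}g_i^2$, whose immediate consequence is that for every threshold $\tau>0$,
\begin{equation}
\sum_{i=1}^{|\mathcal{S}|}a_{[i],1}^2\;\ge\;\sum_{i=1}^m g_i^2\,\mathbb{1}\{|g_i|>\tau\}\qquad\text{on the event}\quad N_\tau:=\big|\{i:|g_i|>\tau\}\big|\le|\mathcal{S}|.
\end{equation}
The problem thereby splits into controlling the truncated count $N_\tau$ and lower bounding the truncated energy $\sum_i g_i^2\mathbb{1}\{|g_i|>\tau\}$.

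I would fix a small $\epsilon_0>0$ and define $\tau$ implicitly through $\mathbb{P}(|g|>\tau)=(1-\epsilon_0)\,|\mathcal{S}|/m$, so that $N_\tau$ is Binomial with mean $(1-\epsilon_0)|\mathcal{S}|$. A Chernoff bound then gives $\mathbb{P}(N_\tau>|\mathcal{S}|)\le {\rm e}^{-c\epsilon_0^2|\mathcal{S}|}$, and here the hypothesis $m\ge c_0|\mathcal{S}|$ is essential: it makes $|\mathcal{S}|$ a fixed fraction of $m$, so the failure probability is genuinely of order ${\rm e}^{-c_2 m}$ rather than merely ${\rm e}^{-c|\mathcal{S}|}$. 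For the energy, I would treat $\sum_i g_i^2\mathbb{1}\{|g_i|>\tau\}$ as a sum of i.i.d.\ sub-exponential summands and apply Bernstein's inequality to obtain $\sum_i g_i^2\mathbb{1}\{|g_i|>\tau\}\ge(1-\epsilon_1)\,m\,\mathbb{E}\big[g^2\mathbb{1}\{|g|>\tau\}\big]$ with probability at least $1-{\rm e}^{-cm}$; a union bound merges the two events.

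It then remains to evaluate the mean, which by integration by parts has the closed form $\mathbb{E}[g^2\mathbb{1}\{|g|>\tau\}]=2\tau\phi(\tau)+\mathbb{P}(|g|>\tau)$ with $\phi(t)=\tfrac{1}{\sqrt{2\pi}}{\rm e}^{-t^2/2}$. Multiplying by $m$ and using $m\,\mathbb{P}(|g|>\tau)=(1-\epsilon_0)|\mathcal{S}|$, this becomes $(1-\epsilon_0)|\mathcal{S}|+2m\tau\phi(\tau)$, and since the additive $|\mathcal{S}|$ covers the ``$+1$'' term, the proof reduces to the single-variable inequality (in $\tau$, equivalently in the ratio $m/|\mathcal{S}|$)
\begin{equation}
\frac{2\tau\phi(\tau)}{\mathbb{P}(|g|>\tau)}\;\ge\;0.99\,\log\!\frac{m}{|\mathcal{S}|}\qquad\text{for all }\tau\ge0,
\end{equation}
after which reinstating the $(1-\epsilon_0)(1-\epsilon_1)$ slack produces the advertised factor $0.99$ in~\eqref{eq:low0taf}.

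The main obstacle I anticipate is exactly this last inequality. Because $\tau$ is specified only implicitly by the Gaussian tail, relating $2\tau\phi(\tau)$ to $\log(m/|\mathcal{S}|)$ must pass through the Mills ratio, and the tempting crude step $2\tau\phi(\tau)\ge\tau^2\,\mathbb{P}(|g|>\tau)$ is \emph{too lossy} for moderate $\tau$: it would wrongly demand $\tau^2\ge\log(m/|\mathcal{S}|)$, which already fails near $\tau\approx1$ (where $\mathbb{P}(|g|>1)\approx0.317<{\rm e}^{-1}$). One must therefore keep the mean exact and verify the displayed inequality directly. The binding case is the boundary $m/|\mathcal{S}|\to1$, i.e.\ $\tau\to0$, where both sides vanish and the ratio of their linearizations is only about $1.01$, leaving essentially no margin; it is this razor-thin slack that forces $\epsilon_0,\epsilon_1$ to be taken very small and that constitutes the delicate bookkeeping. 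Away from the boundary the estimate is comfortable, and the probabilistic ingredients (Chernoff and Bernstein) are otherwise routine.
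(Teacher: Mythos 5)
Your proposal is sound in its essentials, but it is a genuinely different argument from the paper's. In fact the paper does not prove this lemma at all: it imports it verbatim as Lemma~3 of \cite{taf}. That proof sidesteps the awkwardness of the Gaussian order statistics by a pairing trick: group the $a_{i,1}^2$'s two at a time, so each pair-sum is chi-square with two degrees of freedom, i.e., exponential with exactly known tail ${\rm e}^{-\xi/2}$; the sum of the top $|\mathcal{S}|$ squared Gaussians dominates the sum of the top $\nicefrac{|\mathcal{S}|}{2}$ pair-sums, the relevant quantile is exactly $2\log(\nicefrac{m}{|\mathcal{S}|})$, the truncated mean $(\hat{\chi}+2){\rm e}^{-\hat{\chi}/2}$ produces the factor $1+\log(\nicefrac{m}{|\mathcal{S}|})$ in closed form, and a Bernstein bound finishes. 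What the pairing buys is that no analytic inequality is ever needed. Your route stays with the Gaussians directly: threshold at the population quantile, Chernoff for the exceedance count, Bernstein for the truncated energy, and the exact identity $\mathbb{E}[g^2\mathbb{1}\{|g|>\tau\}]=2\tau\phi(\tau)+Q(\tau)$ with $Q(\tau):=\mathbb{P}(|g|>\tau)$; what this buys is a self-contained, more elementary argument with no parity bookkeeping. Your crux inequality is indeed true for all $\tau\ge 0$: setting $h(\tau):=2\tau\phi(\tau)-Q(\tau)\log(\nicefrac{1}{Q(\tau)})$, one computes $h'(\tau)=2\phi(\tau)\left[\log(\nicefrac{1}{Q(\tau)})-\tau^2\right]$, and since $\log(\nicefrac{1}{Q(\tau)})-\tau^2$ is strictly concave in the relevant sense (its derivative $\nicefrac{\phi(\tau)}{(1-\Phi(\tau))}-2\tau$ is strictly decreasing), it changes sign exactly once; as $h(0)=0$ and $h(\tau)\to 0^+$ at infinity, $h$ is nonnegative throughout, and your $(1-\epsilon_0)(1-\epsilon_1)\ge 0.99$ slack then closes the proof.

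Two smaller corrections. First, your justification of the ${\rm e}^{-c_2m}$ probability has an inequality backwards: $m\ge c_0|\mathcal{S}|$ only \emph{upper}-bounds $\nicefrac{|\mathcal{S}|}{m}$, so by itself it cannot prevent the failure probability from degrading to ${\rm e}^{-c|\mathcal{S}|}$. What makes $|\mathcal{S}|$ a constant fraction of $m$ is the other half of the hypothesis, $c_0|\mathcal{S}|\ge c_1 n$, combined with the regime $m=\Theta(n)$ in which the lemma is invoked — or simply the paper's explicit choice $|\mathcal{S}|=\lfloor\nicefrac{3m}{13}\rfloor$. Second, your worry about razor-thin slack as $\nicefrac{m}{|\mathcal{S}|}\to 1$ is moot: the lemma only asserts the bound for $m\ge c_0|\mathcal{S}|$ with $c_0$ at the prover's disposal, and the paper's standing assumption is $\nicefrac{|\mathcal{S}|}{m}\le 0.25$, which forces $\tau\gtrsim 1.15$; there your inequality holds with a comfortable margin (e.g., at $\tau=1$ the two sides are roughly $1.53$ versus $1.15$), so $\epsilon_0,\epsilon_1$ need not be taken vanishingly small and the bookkeeping is not delicate at all.
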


Combining the results in Lemma \ref{lem:lowtaf} and \eqref{eq:bbb} together, one further establishes that
\begin{equation}\label{eq:bxbound}
\|\bm{B}\bm{x}\|^2\ge |a_{[|\mathcal{S}|],1}|^\gamma \sum_{i=1}^{|\mathcal{S}|}a_{[i],1}^2\ge |a_{[|\mathcal{S}|],1}|^\gamma 
\cdot	0.99|\mathcal{S}|\big[1+\log (\nicefrac{m}{|\mathcal{S}|})\big].
\end{equation}
The task remains to estimate the size of $|a_{[|\mathcal{S}|],1}|$, which we recall is the $|\mathcal{S}|$-th largest among the $m$ independent realizations $\{\psi_{i}=|a_{i,1}|\}_{1\le i\le m}$. 
Taking $\gamma=-1$ in \eqref{eq:pdf1} gives the pdf of the half-normal distribution
\begin{equation}
\label{eq:pdf2}
p(t)=\sqrt{\frac{2}{\pi}}{\rm e}^{-\frac{1}{2}t^2},\quad t> 0
\end{equation}
whose corresponding cdf is
\begin{equation}
\label{eq:cdf}
F(\tau)={\rm erf}(\nicefrac{\tau}{\sqrt{2}}).
\end{equation}

Setting $F(\tau_{|\mathcal{S}|}):=1-\nicefrac{|\mathcal{S}|}{m}$ or using the complementary cdf $\nicefrac{|\mathcal{S}|}{m}:={\rm erfc}(\nicefrac{\tau}{\sqrt{2}})$ based on the  complementary error function
gives rise to an estimate of the size of the $|\mathcal{S}|$-th largest [or equivalently, the $(m-|\mathcal{S}|)$-th smallest] entry in the $m$ realizations, namely
\begin{equation}
\tau_{|\mathcal{S}|}=\sqrt{2} \,{\rm erfc}^{-1}(\nicefrac{\mathcal{|S}|}{m})
\end{equation}  
where ${\rm erfc}^{-1}(\cdot)$ represents the inverse complementary error function. In the sequel, we show that the deviation of the $|\mathcal{S}|$-th largest realization $\psi_{|\mathcal{S}|}$ from its expected value $\tau_{|\mathcal{S}|}$ found above is bounded with high probability.

For random variable $\psi=|a|$ with $a$ obeying the standard Gaussian distribution, consider the event $\psi\le \tau_{|\mathcal{S}|}-\delta$ for fixed constant $\delta>0$. Define the indicator random variable $\chi=\mathbb{1}_{\{\psi\le \tau_{|\mathcal{S}|}-\delta\}}$, whose expectation can be obtained by substituting $\tau=\tau_{|\mathcal{S}|}-\delta$ into the pdf in \eqref{eq:cdf}
\begin{equation}
\label{eq:chiexp}
\mathbb{E}[\chi_i]={\rm erf}(\nicefrac{\tau_{|\mathcal{S}|}-\delta}{\sqrt{2}}).
\end{equation}
Consider now the $m$ independent copies $\{\chi_i=\mathbb{1}_{\{\psi_i\le \tau_{|\mathcal{S}|}-\delta 
	\}}\}_{1\le i\le m}$ of $\chi$, and the following holds
\begin{align}
\mathbb{P}(\psi_{|\mathcal{S}|}\le \tau_{|\mathcal{S}|}-\delta)&=\mathbb{P}\Big(\sum_{i=1}^m\chi_i\le m-|\mathcal{S}|\Big)\nonumber\\
&= \mathbb{P}\Big(\frac{1}{m}\sum_{i=1}^m\big(\chi_i-\mathbb{E}[\chi_i]\big)\le 1-\frac{|\mathcal{S}|}{m}-\mathbb{E}[\chi_i]
\Big)\label{eq:chivar}.
\end{align} 

Clearly, random variables $\chi_i$ are bounded, so they are sub-gaussian \cite{chap2010vershynin}. For notational brevity, let $t:=1-\nicefrac{|\mathcal{S}|}{m}-\mathbb{E}[\chi_i]=1-\nicefrac{|\mathcal{S}|}{m}-{\rm erf}(\nicefrac{\tau_{|\mathcal{S}|}-\delta}{\sqrt{2}})$.
Appealing to a large deviation inequality for sums of independent
sub-gaussian random variables, one establishes that
\begin{equation}
\label{eq:subchi}
\mathbb{P}(\psi_{|\mathcal{S}|}\le \tau_{|\mathcal{S}|}-\delta)= \mathbb{P}\Big(\frac{1}{m}\sum_{i=1}^m\big(\chi_i-\mathbb{E}[\chi_i]\big)\le 1-\frac{|\mathcal{S}|}{m}-\mathbb{E}[\chi_i]
\Big)\le {\rm e}^{-c_5mt^2}
\end{equation}
where $c_5>0$ is some absolute constant. 
On the other hand, using the definition of the error function and properties of integration gives rise to
\begin{align}
\label{eq:tbound}
t=1-\nicefrac{|\mathcal{S}|}{m}-{\rm erf}(\nicefrac{\tau_{|\mathcal{S}|}-\delta}{\sqrt{2}})=\frac{2}{\sqrt{\pi}} \int_{\nicefrac{(\tau_{|\mathcal{S}|}-\delta)}{\sqrt{2}}}^{\nicefrac{\tau_{|\mathcal{S}|}}{\sqrt{2}}} {\rm e}^{-s^2}{\rm d}s\ge \sqrt{\frac{2}{\pi}}\delta {\rm e}^{-\frac{\tau_{|\mathcal{S}|}^2}{2}}\ge  \sqrt{\frac{2}{\pi}}\delta.
\end{align}
Taking the results in \eqref{eq:subchi} and \eqref{eq:tbound} together, one concludes that
fixing any constant $\delta>0$, the following holds with probability at least
$
1-{\rm e}^{-c_2m}$: 
$$\psi_{|\mathcal{S}|}\ge \tau_{|\mathcal{S}|}-\delta\ge \sqrt{2} \,{\rm erfc}^{-1}(\nicefrac{\mathcal{|S}|}{m})-\delta$$ 
where the constant $c_2:={\nicefrac{2}{\pi}}\cdot c_5\delta^2$. Furthermore, choosing without loss of generality $\delta:=0.01\tau_{|\mathcal{S}|}$ above leads to $\psi_{|\mathcal{S}|}\ge 1.4\,{\rm erfc}^{-1}(\nicefrac{|\mathcal{S}|}{m})$. 

Substituting the last inequality into \eqref{eq:bxbound} and under our working assumption $\nicefrac{|\mathcal{S}|}{m}\le 0.25$, one readily obtains that
\begin{equation}
\|\bm{B}\bm{x}\|^2\ge [1.4\,
{\rm erfc}^{-1}(\nicefrac{|\mathcal{S}|}{m}) ]^\gamma
\cdot	0.99|\mathcal{S}|\big[1+\log (\nicefrac{m}{|\mathcal{S}|})\big]\ge 0.99\cdot1.14^\gamma|\mathcal{S}|\big[1+\log (\nicefrac{m}{|\mathcal{S}|})\big]
\end{equation}
which holds with probability exceeding $1-{\rm e}^{-c_2m}$ for some absolute constant $c_2>0$, concluding the proof of Lemma \ref{lem:low}.

\subsection{Proof of Proposition \ref{lem:low}}\label{prop:right}

To proceed, let us introduce the following events for all $1\le i\le m$:
\begin{align}
\mathcal{D}_i&:=\big\{(\bm{a}_i^\ast\bm{x})(\bm{a}_i^\ast\bm{z})<0\big\}\label{eq:eset} \\
\mathcal{E}_i&:=	\left\{\frac{|\bm{a}_i^\ast\bm{z}|}{|\bm{a}_i^\ast\bm{x}|}\ge \frac{1}{1+\eta}
\right\}\label{eq:gset}
\end{align}
for some fixed constant $\eta>0$, 
in which the former corresponds to the gradients involving wrongly estimated signs, namely, $\frac{\bm{a}_i^\ast\bm{z}}{|\bm{a}_i^\ast\bm{z}|}\ne \frac{\bm{a}_i^\ast\bm{x}}{|\bm{a}_i^\ast\bm{x}|}$, and the second will be useful for deriving error bounds. Based on the definition of $\mathcal{D}_i$ and with $\mathbb{1}_{\mathcal{D}_i}$ denoting the indicator function of the event $\mathcal{D}_i$, we have
\begin{align}
\langle\ell_{\rm rw}(\bm{z}),\bm{h}\rangle&=\frac{1}{m}\sum_{i=1}^m w_i\!\left(\bm{a}_i^\ast\bm{z}-|\bm{a}_i^\ast\bm{x}|\frac{\bm{a}_i^\ast\bm{z}}{|\bm{a}_i^\ast\bm{z}|}
\right)(\bm{a}_i^\ast\bm{h})\nonumber\\
&=\frac{1}{m}\sum_{i=1}^m w_i\!\left(\bm{a}_i^\ast\bm{h}+\bm{a}_i^\ast\bm{x}-|\bm{a}_i^\ast\bm{x}|\frac{\bm{a}_i^\ast\bm{z}}{|\bm{a}_i^\ast\bm{z}|}
\right)(\bm{a}_i^\ast\bm{h})\nonumber\\
&=\frac{1}{m}\sum_{i=1}^m w_i(\bm{a}_i^\ast\bm{h})^2+\frac{1}{m}\sum_{i=1}^m2w_i
\big(\bm{a}_i^\ast\bm{x}\big)(\bm{a}_i^\ast\bm{h})\mathbb{1}_{\mathcal{D}_i}\nonumber\\
&\ge \frac{1}{m}\sum_{i=1}^mw_i(\bm{a}_i^\ast\bm{h})^2-\frac{1}{m}\sum_{i=1}^m2w_i\big|\bm{a}_i^\ast\bm{x}\big|\big|\bm{a}_i^\ast\bm{h}\big|\mathbb{1}_{\mathcal{D}_i} \label{eq:lrcfinal}.	
\end{align}  

In the following, we will derive a lower bound for the term on the right hand side of \eqref{eq:lrcfinal}. To be specific, a lower bound for the first term $\frac{1}{m}\sum_{i=1}^mw_i(\bm{a}_i^\ast\bm{h})^2$
and an upper bound for the second term $\frac{1}{m}\sum_{i=1}^m2w_i\big|\bm{a}_i^\ast\bm{x}\big|\big|\bm{a}_i^\ast\bm{h}\big|\mathbb{1}_{\mathcal{D}_i} $ will be obtained, which occupies Lemmas \ref{le:1sttermraf} and \ref{le:2ndtermraf}, with their proofs postponed to Appendix \ref{proof:1sttermraf} and Appendix \ref{proof:2ndtermraf}, respectively.

%

\begin{lemma}
	\label{le:1sttermraf}
	Fix any $\eta,\,\beta>0$. For any sufficiently small constant $\epsilon>0$, the following holds with probability at least $1-2{\rm e}^{-c_5\epsilon^2 m}$:
	\begin{equation}
	\label{eq:1sttermraf}
	\frac{1}{m}\sum_{i=1}^mw_i(\bm{a}_i^\ast\bm{h})^2\ge \frac{1-\zeta_1-\epsilon}{1+\beta(1+\eta)}\big\|\bm{h}\big\|^2
	\end{equation}
	with $w_i=\frac{1}{1+\nicefrac{\beta}{(\nicefrac{|\bm{a}_i^\ast\bm{z}|}{|\bm{a}_i^\ast\bm{x}|})}}$ for all $1\le i\le m$,  
	provided that $\nicefrac{m}{n}>(c_6 \cdot\epsilon^{-2}\log\epsilon^{-1})$ for certain numerical constants $c_5,\,c_6>0$. 
\end{lemma}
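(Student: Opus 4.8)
The plan is to pass from the reweighted quadratic form to a \emph{truncated} unweighted one, for which the factor $\tfrac{1}{1+\beta(1+\eta)}$ in \eqref{eq:1sttermraf} emerges naturally. Since $w_i=\tfrac{1}{1+\beta/r_i}$ with $r_i:=\nicefrac{|\bm a_i^\ast\bm z|}{|\bm a_i^\ast\bm x|}$ is monotonically increasing in $r_i$, on the event $\mathcal E_i$ of \eqref{eq:gset} (where $r_i\ge \tfrac{1}{1+\eta}$, hence $\beta/r_i\le\beta(1+\eta)$) one has the deterministic bound $w_i\ge \tfrac{1}{1+\beta(1+\eta)}$. As every summand $w_i(\bm a_i^\ast\bm h)^2$ is nonnegative, discarding the indices in $\mathcal E_i^c$ yields
\begin{equation}
\frac1m\sum_{i=1}^m w_i(\bm a_i^\ast\bm h)^2\ge \frac{1}{1+\beta(1+\eta)}\,\frac1m\sum_{i=1}^m(\bm a_i^\ast\bm h)^2\,\mathbb{1}_{\mathcal E_i}.
\end{equation}
It therefore suffices to establish the uniform lower bound $\tfrac1m\sum_i(\bm a_i^\ast\bm h)^2\mathbb{1}_{\mathcal E_i}\ge(1-\zeta_1-\epsilon)\|\bm h\|^2$ over all $\bm z$ with $\|\bm h\|\le\tfrac{1}{10}\|\bm x\|$, which identifies $\zeta_1$ as a truncation loss.

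Next I would compute the population mean of the truncated second moment. Writing $\mathbb{1}_{\mathcal E_i}=1-\mathbb{1}_{\mathcal E_i^c}$ and using $\mathbb E[(\bm a^\ast\bm h)^2]=\|\bm h\|^2$ for $\bm a\sim\mathcal N(\bm 0,\bm I_n)$, the task reduces to upper bounding the loss $\mathbb E[(\bm a^\ast\bm h)^2\mathbb{1}_{\mathcal E^c}]$, where $\mathcal E^c=\{(1+\eta)|\bm a^\ast\bm z|<|\bm a^\ast\bm x|\}$. By rotational invariance of the Gaussian, only the projection of $\bm a$ onto $\mathrm{span}\{\bm x,\bm h\}$ matters, so this is a two-dimensional integral: setting $a:=\bm a^\ast\bm x/\|\bm x\|$ and $b$ the orthogonal coordinate (both standard normal and independent), $\mathcal E^c$ becomes a wedge in the $(a,b)$-plane and $(\bm a^\ast\bm h)^2$ a fixed quadratic form. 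Passing to polar coordinates factors the radius (with $\mathbb E[R^2]=2$) from the angle, leaving an elementary angular integral depending only on $\eta$ and on the direction and length of $\bm h$. I would then set $\zeta_1:=\sup\{\mathbb E[(\bm a^\ast\bm h)^2\mathbb{1}_{\mathcal E^c}]/\|\bm h\|^2:\ \|\bm h\|\le\tfrac{1}{10}\|\bm x\|\}$; continuity of the integrand in the angle and length of $\bm h$ over this compact set guarantees the supremum is attained and strictly below one, and it can be evaluated numerically for any fixed $\eta$ (e.g. $\eta=\tfrac12$).

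The remaining, and most delicate, step is to upgrade the mean computation to a bound holding \emph{uniformly} over the continuum $\{\bm z:\|\bm z-\bm x\|\le\tfrac{1}{10}\|\bm x\|\}$. For a \emph{fixed} $\bm z$ this is routine: the summands $(\bm a_i^\ast\bm h)^2\mathbb{1}_{\mathcal E_i}$ are independent, nonnegative, and sub-exponential (a squared Gaussian times a bounded indicator), so a Bernstein-type deviation inequality gives $\tfrac1m\sum_i(\bm a_i^\ast\bm h)^2\mathbb{1}_{\mathcal E_i}\ge \mathbb E[\cdot]-\tfrac{\epsilon}{2}\|\bm h\|^2$ with probability at least $1-2\mathrm e^{-c_5\epsilon^2 m}$, the small-deviation (Gaussian) regime accounting for the $\epsilon^2 m$ in the exponent. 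I would then cover the ball by an $\epsilon$-net of cardinality $(C/\epsilon)^n$ and union bound; keeping the failure probability exponentially small forces $\nicefrac{m}{n}\gtrsim \epsilon^{-2}\log\epsilon^{-1}$, precisely the stated sample requirement.

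The main obstacle is the passage from the net to an arbitrary $\bm z$, because the truncation indicator $\mathbb{1}_{\mathcal E_i}$ is \emph{discontinuous} in $\bm z$: although $(\bm a_i^\ast\bm h)^2$ is Lipschitz in $\bm h$, perturbing $\bm z$ can flip individual indicators. I would control this by a monotonicity/sandwiching argument — replacing the threshold $\tfrac{1}{1+\eta}$ by $\tfrac{1}{1+\eta}\pm\delta$ produces events monotone in the threshold, and for a net point $\bm z_0$ near $\bm z$ one obtains an inclusion of the form $\mathcal E_i(\bm z)\supseteq\mathcal E_i^{(+\delta)}(\bm z_0)$ with $\delta$ comparable to the net radius. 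The gap between the $\pm\delta$-thresholded expectations is $O(\delta)$ because the density of the ratio $r$ near the threshold is bounded, so choosing $\delta$ (hence the net radius) a small multiple of $\epsilon$ absorbs the discretization error into the $\epsilon\|\bm h\|^2$ budget. Collecting the three error contributions — the mean loss $\zeta_1$, the fixed-$\bm z$ fluctuation $\tfrac{\epsilon}{2}$, and the net/threshold slack $\tfrac{\epsilon}{2}$ — then yields \eqref{eq:1sttermraf} on the claimed event, completing the proof.
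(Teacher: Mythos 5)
Your opening reduction is exactly the one the paper uses: since $w_i$ is increasing in $r_i:=\nicefrac{|\bm{a}_i^\ast\bm{z}|}{|\bm{a}_i^\ast\bm{x}|}$, on the event $\mathcal{E}_i$ of \eqref{eq:gset} one has $w_i\ge \frac{1}{1+\beta(1+\eta)}$, and dropping the nonnegative summands off $\mathcal{E}_i$ leaves the truncated quadratic form $\frac{1}{m}\sum_{i=1}^m(\bm{a}_i^\ast\bm{h})^2\mathbb{1}_{\mathcal{E}_i}$. At that point, however, the paper does not re-derive anything: it simply invokes Lemma~\ref{le:1sttermtaf} (imported from the TAF paper) as a black box, with $\zeta_1$ given by \eqref{eq:zeta12}. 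You instead attempt to prove that concentration result from scratch, and your sketch of it contains a genuine gap.

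The problematic step is the uniformity claim that for $\bm{z}$ near a net point $\bm{z}_0$ one has the inclusion $\mathcal{E}_i(\bm{z})\supseteq\mathcal{E}_i^{(+\delta)}(\bm{z}_0)$ with $\delta$ comparable to the net radius. Passing from $\bm{z}_0$ to $\bm{z}$ perturbs the ratio $r_i$ by $\nicefrac{|\bm{a}_i^\ast(\bm{z}-\bm{z}_0)|}{|\bm{a}_i^\ast\bm{x}|}$, and this is \emph{not} comparable to $\|\bm{z}-\bm{z}_0\|$: the numerator can be as large as $\|\bm{a}_i\|\,\|\bm{z}-\bm{z}_0\|\approx\sqrt{n}\,\|\bm{z}-\bm{z}_0\|$, and the denominator $|\bm{a}_i^\ast\bm{x}|$ can be arbitrarily small. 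Hence the inclusion fails pointwise for precisely those indices, and your bounded-density observation does not repair it; bounded density of $r_i$ near the threshold controls expected mass for a \emph{fixed} $\bm{z}$, not which indicators flip under the worst-case $\bm{z}$ in the ball. The two standard repairs both cost something your budget does not account for: (i) shrinking the net radius to $O(\epsilon/\sqrt{n})$ inflates the net cardinality to $(C\sqrt{n}/\epsilon)^n$, and the union bound then demands $\nicefrac{m}{n}\gtrsim\epsilon^{-2}(\log n+\log\epsilon^{-1})$, i.e., an extra $\log n$ factor beyond the claimed $\nicefrac{m}{n}>c_6\epsilon^{-2}\log\epsilon^{-1}$; or (ii) keeping the net radius at $O(\epsilon)$ and showing that the set $\mathcal{F}$ of flipped indices has cardinality $O(\epsilon m)$, and then bounding $\frac{1}{m}\sum_{i\in\mathcal{F}}(\bm{a}_i^\ast\bm{h})^2$ \emph{uniformly} over all index subsets of that size and all admissible $\bm{h}$ --- a restricted-eigenvalue-type lemma that your sketch never states. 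This is exactly the delicacy the proof of the cited Lemma~\ref{le:1sttermtaf} has to grapple with (the $\sqrt{1.01}$ threshold slack appearing in \eqref{eq:zeta12} is a trace of that bookkeeping). So your step 1 and the fixed-$\bm{z}$ Bernstein-plus-net architecture are sound and match the intended sample complexity, but the net-to-continuum passage, as written, does not go through.
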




Now we turn to the second term in \eqref{eq:lrcfinal}. For ease of exposition, let us first introduce the following events
\begin{align}
\mathcal{B}_i&:=\big\{|\bm{a}_i^\ast\bm{x}|< |\bm{a}_i^
\ast\bm{h}|\le (k+1)|\bm{a}_i^\ast\bm{x}|
\big\}\label{eq:bset}\\
\mathcal{O}_i&:=\big\{ (k+1)|\bm{a}_i^\ast\bm{x}|< |\bm{a}_i^
\ast\bm{h}|
\big\}\label{eq:oset}
\end{align}
for all $1\le i\le m$ and some fixed constant $k>0$.
The second term can be bounded as follows
\begin{align}
\frac{1}{m}\sum_{i=1}^m2w_i\big|\bm{a}_i^\ast\bm{x}\big|\big|\bm{a}_i^\ast\bm{h}\big|\mathbb{1}_{\mathcal{D}_i}&\le \frac{1}{m}\sum_{i=1}^mw_i\!\left[(\bm{a}_i^\ast\bm{x})^2+(\bm{a}_i^\ast\bm{h})^2\right]\mathbb{1}_{\{(\bm{a}_i^\ast\bm{z})(\bm{a}_i^\ast\bm{x})<0\}}\nonumber\\
&=\frac{1}{m}\sum_{i=1}^m w_i\!\left[(\bm{a}_i^\ast\bm{x})^2+(\bm{a}_i^\ast\bm{h})^2\right]\mathbb{1}_{\{(\bm{a}_i^\ast\bm{h})(\bm{a}_i^\ast\bm{x})+(\bm{a}_i^\ast\bm{x})^2<0\}}\nonumber\\
&\le \frac{1}{m}\sum_{i=1}^m w_i\!\left[(\bm{a}_i^\ast\bm{x})^2+(\bm{a}_i^\ast\bm{h})^2\right]\mathbb{1}_{ \{|\bm{a}_i^\ast\bm{x}|< |\bm{a}_i^
	\ast\bm{h}|\}}\nonumber\\
&\le  \frac{2}{m}\sum_{i=1}^m w_i(\bm{a}_i^\ast\bm{h})^2\mathbb{1}_{ \{|\bm{a}_i^\ast\bm{x}|< |\bm{a}_i^
	\ast\bm{h}|\}}\nonumber\\
&= \frac{2}{m}\sum_{i=1}^m w_i(\bm{a}_i^\ast\bm{h})^2\mathbb{1}_{ \{|\bm{a}_i^\ast\bm{x}|< |\bm{a}_i^
	\ast\bm{h}|\le (k+1)|\bm{a}_i^\ast\bm{x}|\}}\nonumber\\
& + \frac{2}{m}\sum_{i=1}^m w_i(\bm{a}_i^\ast\bm{h})^2\mathbb{1}_{ \{(k+1)|\bm{a}_i^\ast\bm{x}|< |\bm{a}_i^
	\ast\bm{h}|\}}\nonumber\\
&=	\frac{2}{m}\sum_{i=1}^m w_i(\bm{a}_i^\ast\bm{h})^2\mathbb{1}_{ \mathcal{B}_i}
+	 \frac{2}{m}\sum_{i=1}^m w_i(\bm{a}_i^\ast\bm{h})^2\mathbb{1}_{ \mathcal{O}_i}
\label{eq:2ndbound}
\end{align}
where the first equality is derived by substituting $\bm{z}=\bm{h}+\bm{x}$ according to the definition of $\bm{h}$, the second event suffices for $(\bm{a}_i^\ast\bm{h})(\bm{a}_i^\ast\bm{x})+(\bm{a}_i^\ast\bm{x})^2<0$, and the second equality follows from writing the indicator function $\mathbb{1}_{\{|\bm{a}_i^\ast\bm{x}|< |\bm{a}_i^\ast\bm{h}|\}}$ as the summation of two indicator functions of two events $\mathbb{1}_{\{|\bm{a}_i^\ast\bm{x}|<|\bm{a}_i^\ast\bm{h}|\le (k+1)|\bm{a}_i^\ast\bm{x}|\}}$ and $\mathbb{1}_{\{|\bm{a}_i^\ast\bm{h}|> (k+1)|\bm{a}_i^\ast\bm{x}|\}}$.

The task so far remains to derive upper bounds for the two terms on the right side of \eqref{eq:2ndbound}, which leads to Lemma \ref{le:2ndtermraf}. 

\begin{lemma}
	\label{le:2ndtermraf}
	Fixing some $k>0$, define $\zeta_2$ to be the maximum of $\mathbb{E}[w_i]$ in \eqref{eq:comp} for $\varrho=0.01$ and $\nu =0.1$, which depends only on $k$.	
	For any $\epsilon>0$, if $\nicefrac{m}{n}>c_6 \epsilon^{-2}\log \epsilon^{-1}$, the following hold simultaneously with probability at least $1-c_3{\rm e}^{-c_2\epsilon^2m}$:
	\begin{equation}
	\frac{1}{m}\sum_{i=1}^m w_i(\bm{a}_i^\ast\bm{h})^2\mathbb{1}_{\mathcal{O}_i}\le (\zeta_2+\epsilon )\|\bm{h}\|^2\label{eq:2ndterm2raf}
	\end{equation}
	and
	\begin{equation}
	\frac{1}{m}\sum_{i=1}^m w_i(\bm{a}_i^\ast\bm{h})^2\mathbb{1}_{ \mathcal{B}_i}\le \frac{0.1271-\zeta_2+\epsilon}{1+\nicefrac{\beta}{k}}\|\bm{h}\|^2 \label{eq:2ndterm1raf}
	\end{equation}
	for all $\bm{h}\in\mathbb{R}^n$ obeying $\nicefrac{\|\bm{h}\|}{\|\bm{x}\|}\le \nicefrac{1}{10}$, where $c_1,\,c_2,\,c_3>0$ are some universal constants. 
\end{lemma}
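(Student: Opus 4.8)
The plan is to prove both inequalities by the same four-step template: reduce to $\|\bm{x}\|=1$ by homogeneity (so that $\psi_i=|\bm{a}_i^\ast\bm{x}|$ and $\bm{a}_i^\ast\bm{x}\sim\mathcal{N}(0,1)$), bound the weight $w_i$ using the geometry of the conditioning event, evaluate the resulting expectation as a two-dimensional Gaussian integral, and finally pass from the expectation to the empirical average \emph{uniformly} over $\bm{h}$ via concentration plus a covering argument. Write $\bm{h}=\langle\bm{h},\bm{x}\rangle\bm{x}+\bm{h}_\perp$; then for each $i$ the pair $(\bm{a}_i^\ast\bm{x},\,\bm{a}_i^\ast\bm{h})$ is jointly Gaussian with $\bm{a}_i^\ast\bm{h}_\perp\sim\mathcal{N}(0,\|\bm{h}_\perp\|^2)$ independent of $\bm{a}_i^\ast\bm{x}$, so every summand is a function of these two scalars alone. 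This is what reduces each expectation to an integral over $\mathbb{R}^2$ that scales as $\|\bm{h}\|^2$ times a function of the angle between $\bm{h}$ and $\bm{x}$.

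For the $\mathcal{O}_i$ term in \eqref{eq:2ndterm2raf} I would keep the weight and compute $\mathbb{E}\big[w_i(\bm{a}_i^\ast\bm{h})^2\mathbb{1}_{\mathcal{O}_i}\big]$ directly; maximizing this integral over the admissible angles (those compatible with $\|\bm{h}\|/\|\bm{x}\|\le 1/10$, which is exactly the role of the parameters $\nu=0.1$ and the slack $\varrho=0.01$ entering \eqref{eq:comp}) yields the bound $\zeta_2\|\bm{h}\|^2$, with $\zeta_2$ depending only on $k$. For the $\mathcal{B}_i$ term in \eqref{eq:2ndterm1raf} the key observation is that on $\mathcal{B}_i$, together with the wrong-sign event $\mathcal{D}_i$ underlying this term, one has $|\bm{a}_i^\ast\bm{z}|=|\bm{a}_i^\ast\bm{h}|-|\bm{a}_i^\ast\bm{x}|\le k\,|\bm{a}_i^\ast\bm{x}|$, hence $|\bm{a}_i^\ast\bm{z}|/|\bm{a}_i^\ast\bm{x}|\le k$ and, since $w_i$ is increasing in that ratio, $w_i\le 1/(1+\beta/k)$. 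Pulling this deterministic factor out reduces the term to $\tfrac{1}{1+\beta/k}\cdot\tfrac1m\sum_i(\bm{a}_i^\ast\bm{h})^2\mathbb{1}_{\mathcal{B}_i}$, whose expectation I bound by $(0.1271-\zeta_2)\|\bm{h}\|^2$: the $0.1271$ is the value of $\mathbb{E}[(\bm{a}_i^\ast\bm{h})^2\mathbb{1}_{\{|\bm{a}_i^\ast\bm{x}|<|\bm{a}_i^\ast\bm{h}|\}}]/\|\bm{h}\|^2$ on the whole region $|\bm{a}_i^\ast\bm{x}|<|\bm{a}_i^\ast\bm{h}|$, and subtracting $\zeta_2$ accounts for the complementary $\mathcal{O}_i$ piece already handled.

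With the expectations in hand, for a \emph{fixed} $\bm{h}$ each summand is bounded above by $(\bm{a}_i^\ast\bm{h})^2$ and is therefore sub-exponential with parameter $O(\|\bm{h}\|^2)$; a Bernstein-type inequality (e.g., \cite[Proposition 5.16]{chap2010vershynin}) then gives that the empirical average exceeds its mean by at most $\epsilon\|\bm{h}\|^2$ except with probability at most $2\,{\rm e}^{-c\epsilon^2 m}$ for some absolute $c>0$ and small $\epsilon$.

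The main obstacle is upgrading this to a bound uniform over all $\bm{h}$ in the ball $\|\bm{h}\|\le\|\bm{x}\|/10$, because the indicators $\mathbb{1}_{\mathcal{B}_i}$ and $\mathbb{1}_{\mathcal{O}_i}$ are discontinuous in $\bm{h}$, so the pointwise estimate cannot simply be lifted. I would cover the ball by an $\epsilon$-net $\mathcal{N}$ of cardinality at most $(C/\epsilon)^n$, apply the fixed-$\bm{h}$ concentration at every net point, and union bound; requiring ${\rm e}^{-c\epsilon^2 m}(C/\epsilon)^n\to 0$ forces precisely $m/n>c_6\,\epsilon^{-2}\log\epsilon^{-1}$. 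To pass from a net point $\tilde{\bm{h}}$ to an arbitrary nearby $\bm{h}$, the continuous factor $w_i(\bm{a}_i^\ast\bm{h})^2$ is Lipschitz on the ball, while the discontinuous indicators are controlled by sandwiching them between slightly perturbed thresholds (shifting the boundary $(k+1)|\bm{a}_i^\ast\bm{x}|$ by $O(\epsilon)$): the expected mass that any $\bm{a}_i$ places in such a thin boundary shell is $O(\epsilon)$, so the extra deviation introduced by discretization is absorbed into the $\epsilon\|\bm{h}\|^2$ budget. Taking the two resulting high-probability events together by a final union bound establishes \eqref{eq:2ndterm2raf} and \eqref{eq:2ndterm1raf} simultaneously with probability $1-c_3{\rm e}^{-c_2\epsilon^2 m}$, completing the proof.
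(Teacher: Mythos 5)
Your proposal matches the paper's own proof in all essentials: fixed-$\bm{h}$ expectations computed as two-dimensional Gaussian integrals maximized over the angle $\rho$ (yielding $\zeta_2$ and the $k=0$ constant $0.1271$), the weight handled by $w_i\le 1$ for the $\mathcal{O}_i$ bound and by $w_i\le \frac{1}{1+\nicefrac{\beta}{k}}$ on $\mathcal{B}_i$ via the wrong-sign identity $|\bm{a}_i^\ast\bm{z}|=|\bm{a}_i^\ast\bm{h}|-|\bm{a}_i^\ast\bm{x}|$, the subtraction trick ($k=0$ bound minus the $\mathcal{O}_i$ bound) for \eqref{eq:2ndterm1raf}, Bernstein-type sub-exponential concentration for fixed $\bm{h}$, and a net-plus-union-bound argument for uniformity. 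Your ``sandwich between perturbed thresholds'' device is exactly the paper's explicit Lipschitz surrogates $\chi_i$ in \eqref{eq:chiraf} (with $\varrho$ playing the role of your $O(\epsilon)$ shift), so the two proofs are essentially identical.
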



Taking the results in \eqref{eq:1sttermraf}, \eqref{eq:2ndbound}, and \eqref{eq:2ndterm2raf}-\eqref{eq:2ndterm1raf}  established in Lemmas \ref{le:1sttermraf} and \ref{le:2ndtermraf} back into \eqref{eq:lrcfinal}, we conclude that
\begin{align}
\langle\ell_{\rm rw}(\bm{z}),\bm{h}\rangle& \ge \frac{1}{m}\sum_{i=1}^mw_i(\bm{a}_i^\ast\bm{h})^2\mathbb{1}_{\mathcal{E}_i}
-\frac{1}{m}\sum_{i=1}^m2w_i\big|\bm{a}_i^\ast\bm{x}\big|\big|\bm{a}_i^\ast\bm{h}\big|\mathbb{1}_{\mathcal{D}_i}\nonumber\\
&\ge \frac{1-\zeta_1-\epsilon}{1+\beta(1+\eta)}\|\bm{h}\|^2-2(\zeta_2+\epsilon )\|\bm{h}\|^2-\frac{2(0.1271-\zeta_2+\epsilon)}{1+\nicefrac{\beta}{k}}\|\bm{h}\|^2\nonumber\\
&=\left[\frac{1-\zeta_1-\epsilon}{1+\beta(1+\eta)}-2(\zeta_2+\epsilon )-\frac{2(0.1271-\zeta_2+\epsilon)}{1+\nicefrac{\beta}{k}}
\right]\|\bm{h}\|^2\label{eq:final}
\end{align} 
which will be rendered positive, provided that $\beta>0$ is small enough, and that parameters
$\eta,\,k>0$ are suitably chosen.


\subsection{Proof of Lemma~\ref{le:1sttermraf}}\label{proof:1sttermraf}

Plugging in the weighting parameters $w_i=\frac{1}{1+\nicefrac{\beta}{(\nicefrac{|\bm{a}_i^\ast\bm{z}|}{|\bm{a}_i^\ast\bm{x}|})}}$ and based on the definition of $\mathcal{E}_i$, the first term in \eqref{eq:lrcfinal} can be lower bounded as follows
\begin{align}
\frac{1}{m}\sum_{i=1}^mw_i(\bm{a}_i^\ast\bm{h})^2&\ge \frac{1}{m}\sum_{i=1}^m\frac{1}{1+\nicefrac{\beta}{(\nicefrac{|\bm{a}_i^\ast\bm{z}|}{|\bm{a}_i^\ast\bm{x}|})}}(\bm{a}_i^\ast\bm{h})^2\mathbb{1}_{\mathcal{E}_i}\label{eq:1stineq}\\
&\ge \frac{1}{m}\sum_{i=1}^m\frac{1}{1+\beta(1+\eta)}(
\bm{a}_i^\ast\bm{h})^2\mathbb{1}_{\big\{\frac{|\bm{a}_i^\ast\bm{z}|}{|\bm{a}_i^\ast\bm{x}|}\ge \frac{1}{1+\eta}\big\}}\nonumber\\
&=\frac{1}{1+\beta(1+\eta)}\cdot\frac{1}{m}\sum_{i=1}^m(
\bm{a}_i^\ast\bm{h})^2\mathbb{1}_{\mathcal{E}_i}
\label{eq:1stineq}
\end{align}
where the first inequality arises from dropping some nonnegative terms from the left hand side, and the second one replaced the ratio $\nicefrac{|\bm{a}_i^\ast\bm{z}|}{|\bm{a}_i^\ast\bm{x}|}$ in the weights by its lower bound $\nicefrac{1}{1+\eta}$ because the weights are monotonically increasing functions of the ratios $\nicefrac{|\bm{a}_i^\ast\bm{z}|}{|\bm{a}_i^\ast\bm{x}|}$.
Using the result in Lemma \ref{le:1sttermtaf}, the last term in \eqref{eq:1stineq} can be further bounded by 
\begin{equation}
\frac{1}{m}\sum_{i=1}^mw_i(\bm{a}_i^\ast\bm{h})^2\ge \frac{1}{1+\beta(1+\eta)}\cdot\frac{1}{m}\sum_{i=1}^m(
\bm{a}_i^\ast\bm{h})^2\mathbb{1}_{\mathcal{E}_i}\ge \frac{1-\zeta_1-\epsilon}{1+\beta(1+\eta)}\|\bm{h}\|^2
\end{equation}
for any fixed sufficiently small constant $\epsilon>0$, which holds with probability at least $1-2{\rm e}^{-c_5\epsilon^2 m}$, if $m>(c_6 \cdot\epsilon^{-2}\log\epsilon^{-1})n$.

\subsection{Proof of Lemma~\ref{le:2ndtermraf}}\label{proof:2ndtermraf}
The proof is adapted from that of \cite[Lemma 9]{reshaped}. We first prove the bound \eqref{eq:2ndterm2raf} for any fixed $\bm{h}$ obeying $\|\bm{h}\|\le \nicefrac{\|\bm{x}\|}{10}$, and subsequently develop a uniform bound at the end of this section. The bound \eqref{eq:2ndterm1raf} can be derived directly from subtracting the bound in \eqref{eq:2ndterm2raf} with $k$ from that bound with $k=0$, followed by an application of the Bernstein-type sub-exponential tail bound \cite{chap2010vershynin}. Hence, we only discuss the first bound \eqref{eq:2ndterm2raf}. 
Because of the discontinuity hence non-Lipschitz of the indicator functions, let us approximate them by a sequence of auxiliary Lipschitz functions. Specifically, with some constant $\varrho>0$, define for all $1\le i\le m$ the ensuing continuous functions
\begin{equation}
\label{eq:chiraf}
\chi_i(s):=\left\{ \begin{array}{lll}
s,&&s>(1+k)^2(\bm{a}_i^\ast\bm{x})^2\\
\frac{1}{\varrho}[s-(k+1)^2(\bm{a}_i^\ast\bm{x})^2]\atop+(k+1)^2(\bm{a}_i^\ast\bm{x})^2,&& (1-\varrho)(k+1)^2(\bm{a}_i^\ast\bm{x})^2\le s \le (k+1)^2(\bm{a}_i^\ast\bm{x})^2\\
0,&&{\rm otherwise}.
\end{array}
\right.
\end{equation}
Clearly, all $\chi_i$(s)'s are random Lipschitz functions with constant $\nicefrac{1}{\varrho}$. Furthermore, it is easy to verify that
\begin{equation}
|\bm{a}_i^\ast\bm{h}|^2\mathbb{1}_{\{(k+1)|\bm{a}_i^\ast\bm{x}|<|\bm{a}_i^\ast\bm{h}| \}}\le \chi_i(|\bm{a}_i^\ast\bm{h}|^2)\le |\bm{a}_i^\ast\bm{h}|^2\mathbb{1}_{\{\sqrt{1-\varrho}(k+1)|\bm{a}_i^\ast\bm{x}|<|\bm{a}_i^\ast\bm{h}|
	\}}.
\end{equation}

Given that the second term involves the addition event $\mathcal{G}_i$ in \eqref{eq:gset}, define $w_i:=\frac{|\bm{a}_i^\ast\bm{h}|^2}{\|\bm{h}\|^2}\mathbb{1}_{\{\sqrt{1-\varrho}(k+1)|\bm{a}_i^\ast\bm{x}|<|\bm{a}_i^\ast\bm{h}|
	\}}$ for $1\le i\le m$, and also $\nu :=\frac{\|\bm{h}\|}{\|\bm{x}\|}$ for notational convenience. If $f(\tau_1,\tau_2)$ denotes the density of two joint Gaussian random variables with correlation constant $\rho =\frac{\bm{h}^\ast\bm{x}}{\|\bm{h}\|\|\bm{x}\|}\in (-1,1)$, then
the expectation of $w_i$ can be obtained based on the conditional expectation
\begin{align}
\mathbb{E}[w_i]&=\int_{-\infty}^{\infty}\mathbb{E}[w_i|\bm{a}_i^\ast\bm{x}=\tau_1\|\bm{x}\|,\bm{a}_i^\ast\bm{h}=\tau_1\|\bm{h}\|] f(\tau_1,\tau_2) {\rm d}\tau_1 {\rm d}\tau_2\nonumber\\
&=\int_{-\infty}^\infty\int_{-\infty}^\infty \tau_2^2 \mathbb{1}_{ \{\sqrt{1-\varrho}(k+1)|\tau_1|<|\tau_2|\nu
	\}
} f(\tau_1,\tau_2) {\rm d}\tau_1 {\rm d}\tau_2\nonumber\\
&=\frac{1}{\sqrt{2\pi}}\int_0^\infty \tau^2_2 \exp(\nicefrac{-\tau_2^2}{2})\bigg[{\rm erf}\left(
\frac{(\nicefrac{\nu}{[\sqrt{1-\varrho}(k+1)]}-\rho)\tau_2
}{\sqrt{2(1-\rho^2)}}	\right)\nonumber\\
&\quad +{\rm erf}\left(
\frac{(\nicefrac{\nu}{[\sqrt{1-\varrho}(k+1)]}+\rho)\tau_2
}{\sqrt{2(1-\rho^2)}}	\right) 
\bigg] {\rm d}\tau_2
\label{eq:comp}\\
&:=\zeta_2\label{eq:zeta2}.
\end{align}

It is not difficult to see that $\mathbb{E}[w_i]=0$ for $\rho =\pm 1$, and 
$\mathbb{E}[w_i]$ is continuous over $\rho\in(-1,1)$ due to the integration property of continuous functions over a continuous interval. Although the last term in \eqref{eq:comp} can not be expressed in closed-form, it can be evaluated numerically. Note first that for fixed parameters $\varrho>0$ and $\nu\le 0.1$, the integration above is monotonically decreasing in $k\ge 0$, and achieves the maximum at $k=0$.  
For parameter values $k=5$, $\nu=0.1$ and $\varrho=0.01$, Fig. \ref{fig:wvalue} plots $\mathbb{E}[w_i]$ as a function of $\rho$, whose maximum $
\zeta_2=0.0213.
$ is achieved at $\rho=0$.
Further from the integration in \eqref{eq:comp}, for fixed $k\ge 0$, $\mathbb{E}[w_i]$ is a monotonically increasing function of both $\nu$ and $\varrho$, it is therefore safe to conclude that
for all $0<\nu\le 0.1$, and $\varrho=0.01$, we have
\begin{equation}
\mathbb{E}[w_i]\le \zeta_2=0.0213.
\end{equation}
Hence, it is safe to conclude that $\mathbb{E}[\chi_i(|\bm{a}_i^\ast\bm{h}|^2)]\le 0.0213\|\bm{h}\|^2$ for $\nu<0.1$, $\varrho=0.01$, and $k=5$. Since $[\chi_i(|\bm{a}_i^\ast\bm{h}|^2$'s are sub-exponential with sub-exponential norm of 
the order $\mathcal{O}(\|\bm{h}\|^2)$, Bernstein-type sub-exponential tail bound \cite{chap2010vershynin} confirms that
\begin{equation}
\mathbb{p}\!\left(\frac{1}{m}\sum_{i=1}^m\frac{\chi_i(|\bm{a}_i^\ast\bm{h}|^2)}{\|\bm{h}\|^2}
>(\zeta_2+\epsilon)\right)<{\rm e}^{-c_7m\epsilon^2}
\end{equation}
for some numerical constant $\epsilon>0$, provided that $\|\bm{h}\|\le \nicefrac{\|\bm{x}\|}{10}$. Finally, due to the fact that $w_i\le 1$ for all $1\le i\le m$, the following holds
\begin{equation}
\frac{1}{m}\sum_{i=1}^mw_i{\chi_i(|\bm{a}_i^\ast\bm{h}|^2)}
<(\zeta_2+\epsilon)\|\bm{h}\|^2
\end{equation}
with probability at least $1-{\rm e}^{-c_7m\epsilon^2}$.

\begin{figure}[ht]
	\begin{center}
		\centerline{\includegraphics[scale = 0.5]{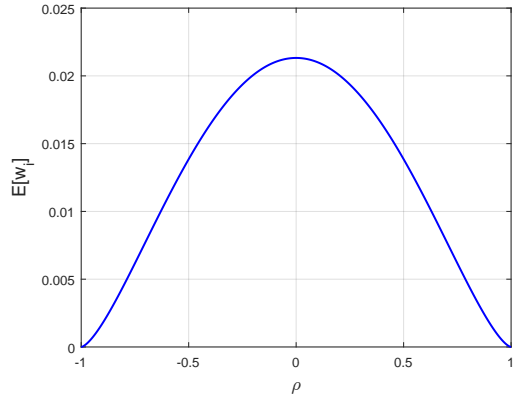}}
		\vspace{-0em}
		\caption{The expectation $\mathbb{E}[w_i]$ as a function of $\rho$ over $[-1,1]$. 
		}
			\vspace{-.0cm}
		\label{fig:wvalue}
	\end{center}
\end{figure}

We have proved the bound in \eqref{eq:2ndterm2raf} for a fixed vector $\bm{h}$, and the uniform bound for all vectors $\bm{h}$ obeying $\|\bm{h}\|\le \nicefrac{\|\bm{x}\|}{10}$ can be obtained by similar arguments in the proof \cite[Lemma 9]{reshaped} with only minor changes in the constants.

Regarding the second bound \eqref{eq:2ndterm1raf}, it is easy to see that 
\begin{align}
\frac{1}{m}\sum_{i=1}^m|\bm{a}_i^\ast\bm{h}|^2\mathbb{1}_{\{|\bm{a}_i^\ast\bm{x}| <|\bm{a}_i^\ast\bm{h}| \le (k+1)|\bm{a}_i^\ast\bm{x}|\}}&=\frac{1}{m}\sum_{i=1}^m|\bm{a}_i^\ast\bm{h}|^2\mathbb{1}_{\{|\bm{a}_i^\ast\bm{x}|<|\bm{a}_i^\ast\bm{h}|\}}\nonumber\\
&-\frac{1}{m}\sum_{i=1}^m|\bm{a}_i^\ast\bm{h}|^2\mathbb{1}_{\{(k+1)|\bm{a}_i^\ast\bm{x}|<|\bm{a}_i^\ast\bm{h}|\}}\nonumber\\
&\le (0.1271-\zeta_2+\epsilon)\|\bm{h}\|^2\label{eq:2final2}
\end{align}
where the last inequality follows from subtracting the bound in \eqref{eq:2ndterm2raf} of $k$ from that corresponding to $k=0$. To account for the weights $w_i=\frac{1}{1+\nicefrac{\beta}{(\nicefrac{|\bm{a}_i^\ast\bm{z}|}{|\bm{a}_i^\ast\bm{x}|})}}$, first notice that $\bm{a}_i^\ast\bm{h}=\bm{a}_i^\ast\bm{z}-\bm{a}_i^\ast\bm{x}$, 
and that our second bound works with $(\bm{a}_i^\ast\bm{z})(\bm{a}_i^\ast\bm{x})<0$ in \eqref{eq:lrcfinal}, 
hence
$
\frac{|\bm{a}_i^\ast\bm{z}|}{|\bm{a}_i^\ast\bm{x}|} \le  \frac{|\bm{a}_i^\ast\bm{h}|}{|\bm{a}_i^\ast\bm{x}|}-1
$. Recall that the second bound \eqref{eq:2ndterm1raf} assumes the event 
$\{|\bm{a}_i^\ast\bm{x}| <|\bm{a}_i^\ast\bm{h}| \le (k+1)|\bm{a}_i^\ast\bm{x}|\}$, implying 
$\frac{|\bm{a}_i^\ast\bm{z}|}{|\bm{a}_i^\ast\bm{x}|} \le  \frac{|\bm{a}_i^\ast\bm{h}|}{|\bm{a}_i^\ast\bm{x}|}-1\le k$. 
Further, because $w_i$ is monotonically increasing in $\frac{|\bm{a}_i^\ast\bm{z}|}{|\bm{a}_i^\ast\bm{x}|} $, then $w_i\le \frac{1}{1+\nicefrac{\beta}{k}}$. Taking this result back to \eqref{eq:2final2} yields
\begin{equation}
\frac{1}{m}\sum_{i=1}^mw_i|\bm{a}_i^\ast\bm{h}|^2\mathbb{1}_{\{|\bm{a}_i^\ast\bm{x}| <|\bm{a}_i^\ast\bm{h}| \le (k+1)|\bm{a}_i^\ast\bm{x}|\}}
\le \frac{0.1271-\zeta_2+\epsilon}{1+\nicefrac{\beta}{k}}\|\bm{h}\|^2
\end{equation}
which proves the second bound in \eqref{eq:2ndterm1raf}.

%
%

\begin{lemma} (\cite[Lemma 5]{taf})
	\label{le:1sttermtaf}
	Fix $\eta\ge \nicefrac{1}{2}$ and $\rho\le \nicefrac{1}{10}$, and let $\mathcal{E}_i$ be defined in~\eqref{eq:eset}.\\ For independent random variables $Y\sim\mathcal{N}(0,\,1)$ and $Z\sim\mathcal{N}(0,\,1)$, define
	\begin{align}
	\label{eq:zeta12}
	\zeta_1:=1-\min\left\{
	\mathbb{E}\left[\mathbb{1}_{\left\{\left|\frac{1-\rho}{\rho}+\frac{Y}{Z}
		\right|\ge \frac{\sqrt{1.01}}{\rho\left(1+\eta\right)}
		\right\}
	}
	\right],\;\mathbb{E}\left[Z^2\mathbb{1}_{\left
		\{\left|\frac{1-\rho}{\rho}+\frac{Y}{Z}
		\right|\ge \frac{\sqrt{1.01}}{\rho\left(1+\eta\right)}\right\}
	}
	\right]
	\right\}.
	\end{align}
	Fixing any $\epsilon>0$ and for any $\bm{h}$ satisfying $\nicefrac{\|\bm{h}\|}{\|\bm{x}\|}\le \rho$, the next holds
	with probability $1-2{\rm e}^{-c_5\epsilon^2 m}$:
	\begin{equation}\label{eq:1stterm}
	\frac{1}{m}\sum_{i=1}^m\left(\bm{a}_i^\ast\bm{h}\right)^2\mathbb{1}_{\mathcal{E}_i}\ge \left(
	1-\zeta_1-\epsilon\right)\left\|\bm{h}\right\|^2
	\end{equation}
	provided that $m>(c_6 \cdot\epsilon^{-2}\log\epsilon^{-1})n$ for some universal constants $c_5,\,c_6>0$.  
\end{lemma}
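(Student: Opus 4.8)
The plan is to establish \eqref{eq:1stterm} in four stages: a dimensional reduction, an expectation computation that produces $\zeta_1$, a pointwise sub-exponential concentration bound, and a covering argument that makes the estimate uniform over the ball $\{\bm{h}:\nicefrac{\|\bm{h}\|}{\|\bm{x}\|}\le\rho\}$. By homogeneity I take $\|\bm{x}\|=1$ and write $\bm{z}=\bm{x}+\bm{h}$, so that $\mathcal{E}_i=\{\nicefrac{|\bm{a}_i^\ast\bm{z}|}{|\bm{a}_i^\ast\bm{x}|}\ge\nicefrac{1}{(1+\eta)}\}$. First I would invoke the rotational invariance of the Gaussian vectors $\bm{a}_i\sim\mathcal{N}(\bm{0},\bm{I}_n)$: each summand $(\bm{a}_i^\ast\bm{h})^2\mathbb{1}_{\mathcal{E}_i}$ depends on $\bm{a}_i$ only through its projection onto $\mathrm{span}\{\bm{x},\bm{h}\}$. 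Decomposing $\bm{h}=h_\parallel\bm{x}+\bm{h}_\perp$ with $\bm{h}_\perp\perp\bm{x}$ and setting $Z:=\bm{a}_i^\ast\bm{x}$ and $Y:=\nicefrac{\bm{a}_i^\ast\bm{h}_\perp}{\|\bm{h}_\perp\|}$, which are independent $\mathcal{N}(0,1)$ variables, gives $\bm{a}_i^\ast\bm{h}=h_\parallel Z+\|\bm{h}_\perp\|Y$ and $\bm{a}_i^\ast\bm{z}=(1+h_\parallel)Z+\|\bm{h}_\perp\|Y$, so that $\mathcal{E}_i$ reduces (after dividing by $|Z|\,\|\bm{h}_\perp\|$ and using the sign symmetry of $\nicefrac{Y}{Z}$) to the deterministic condition $|\frac{1+h_\parallel}{\|\bm{h}_\perp\|}+\frac{Y}{Z}|\ge\frac{1}{\|\bm{h}_\perp\|(1+\eta)}$ on the ratio $\nicefrac{Y}{Z}$.

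Second, I would lower-bound $\mathbb{E}[(\bm{a}_i^\ast\bm{h})^2\mathbb{1}_{\mathcal{E}_i}]$. Expanding $(\bm{a}_i^\ast\bm{h})^2=h_\parallel^2Z^2+2h_\parallel\|\bm{h}_\perp\|ZY+\|\bm{h}_\perp\|^2Y^2$ and discarding the sign-indefinite cross term in the worst case, the expectation splits into a parallel contribution governed by $\mathbb{E}[Z^2\mathbb{1}_{\mathcal{E}_i}]$ and a perpendicular one governed by $\mathbb{E}[Y^2\mathbb{1}_{\mathcal{E}_i}]$, the latter reducing to $\mathbb{P}(\mathcal{E}_i)$ since $Y$ enters the event only through $\nicefrac{Y}{Z}$. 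Writing the total as the convex combination with weights $\nicefrac{h_\parallel^2}{\|\bm{h}\|^2}$ and $\nicefrac{\|\bm{h}_\perp\|^2}{\|\bm{h}\|^2}$ and minimizing over all admissible directions on the sphere of radius $\le\rho$ is exactly what produces the minimum of the two expectations in the definition \eqref{eq:zeta12} of $\zeta_1$. The worst-case substitutions $1+h_\parallel\ge1-\rho$ and $\|\bm{h}_\perp\|\le\rho$, together with a $\sqrt{1.01}$ slack reserved for the discretization below, fix the precise threshold $\frac{\sqrt{1.01}}{\rho(1+\eta)}$ and make the event direction-free, yielding $\mathbb{E}[(\bm{a}_i^\ast\bm{h})^2\mathbb{1}_{\mathcal{E}_i}]\ge(1-\zeta_1)\|\bm{h}\|^2$.

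Third, for a fixed $\bm{h}$ the summands are i.i.d.\ and sub-exponential, being dominated by the squared Gaussian $(\bm{a}_i^\ast\bm{h})^2$; a Bernstein-type tail inequality for sums of independent sub-exponential random variables \cite{chap2010vershynin} then gives $\frac{1}{m}\sum_{i=1}^m(\bm{a}_i^\ast\bm{h})^2\mathbb{1}_{\mathcal{E}_i}\ge(1-\zeta_1-\epsilon)\|\bm{h}\|^2$ with probability at least $1-2{\rm e}^{-c_5\epsilon^2m}$, for any fixed sufficiently small $\epsilon>0$.

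The hard part is upgrading this pointwise bound to one valid simultaneously for all $\bm{h}$ in the ball. The obstacle is that $\mathbb{1}_{\mathcal{E}_i}$ is discontinuous in $\bm{h}$ through $\bm{z}=\bm{x}+\bm{h}$, so $\bm{h}\mapsto\frac{1}{m}\sum_i(\bm{a}_i^\ast\bm{h})^2\mathbb{1}_{\mathcal{E}_i}$ is not Lipschitz and a direct net argument fails. I would resolve this by covering the sphere of directions with a $\delta$-net of cardinality $(\nicefrac{C}{\delta})^n$, controlling the smooth factor $(\bm{a}_i^\ast\bm{h})^2$ through its Lipschitz dependence on $\bm{h}$, and sandwiching the indicator between indicators of slightly relaxed events whose thresholds differ by $O(\delta)$ from that of $\mathcal{E}_i$. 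Choosing $\delta$ small enough that this relaxation is absorbed into the $\sqrt{1.01}$ margin reserved above, and taking $m>c_6\epsilon^{-2}\log(\epsilon^{-1})n$ so that the union bound over the net is dominated by the per-point failure probability, delivers the uniform estimate in \eqref{eq:1stterm} and completes the argument.
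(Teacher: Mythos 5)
First, a point of context: the paper never proves this lemma itself --- it is imported verbatim from \cite[Lemma 5]{taf} (note even the citation tag in the statement), so there is no internal proof to compare against and your argument has to stand on its own. Your outer scaffolding is the standard and correct architecture for results of this type, and it matches what the cited source does: reduce to the two-dimensional variables $Z=\bm{a}_i^\ast\bm{x}$ and $Y=\nicefrac{\bm{a}_i^\ast\bm{h}_\perp}{\|\bm{h}_\perp\|}$ by rotational invariance, rewrite $\mathcal{E}_i$ as a condition on the Cauchy ratio $\nicefrac{Y}{Z}$, prove a Bernstein-type bound for fixed $\bm{h}$, and upgrade to a uniform bound with a net plus a Lipschitz sandwich of the indicator. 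You also correctly identify why a minimum of two expectations appears in \eqref{eq:zeta12}: it should emerge from a convex combination with weights $\nicefrac{h_\parallel^2}{\|\bm{h}\|^2}$ and $\nicefrac{\|\bm{h}_\perp\|^2}{\|\bm{h}\|^2}$.

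However, your second stage --- the expectation bound, which is the heart of the lemma --- contains three genuine errors. (i) $\mathbb{E}[Y^2\mathbb{1}_{\mathcal{E}_i}]$ does \emph{not} reduce to $\mathbb{P}(\mathcal{E}_i)$: writing $Y=R\sin\Theta$, $Z=R\cos\Theta$, the event is a function of $\Theta$ alone while $Y^2=R^2\sin^2\Theta$ also depends on $\Theta$, so $\mathbb{E}[Y^2\mathbb{1}_{\mathcal{E}_i}]=2\,\mathbb{E}\big[\tfrac{t^2}{1+t^2}\mathbb{1}_{\mathcal{E}_i}\big]$ with $t=\tan\Theta$, which can be far from $\mathbb{P}(\mathcal{E}_i)$; the very reason $\zeta_1$ is defined through \emph{both} $\mathbb{E}[\mathbb{1}_A]$ and $\mathbb{E}[Z^2\mathbb{1}_A]$ is that such indicator-weighted moments do not collapse to probabilities. (ii) The cross term $2h_\parallel\|\bm{h}_\perp\|\,\mathbb{E}[ZY\mathbb{1}_{\mathcal{E}_i}]$ cannot be discarded: since $\mathcal{E}_i^c$ is an interval of strictly negative $t$ (because $1+h_\parallel\ge 1-\rho>\nicefrac{1}{(1+\eta)}$), one has $\mathbb{E}[ZY\mathbb{1}_{\mathcal{E}_i}]=2\,\mathbb{E}\big[\tfrac{t}{1+t^2}\mathbb{1}_{\mathcal{E}_i}\big]>0$, so for $h_\parallel<0$ --- precisely the worst case you substitute --- this term is strictly negative, and dropping a negative term invalidates a lower bound. (iii) The worst-case substitution $1+h_\parallel\ge 1-\rho$, $\|\bm{h}_\perp\|\le\rho$ does not make the event ``direction-free'': you need the containment $A\subseteq\mathcal{E}_i$, but $\mathcal{E}_i^c$ is the interval of $t$ centered at $-\nicefrac{(1+h_\parallel)}{\|\bm{h}_\perp\|}$ with half-width $\nicefrac{1}{((1+\eta)\|\bm{h}_\perp\|)}$, which for small $\|\bm{h}_\perp\|$ slides deep into the left tail and leaves $A^c$. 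Concretely, with $\rho=\nicefrac{1}{10}$, $\eta=\nicefrac{1}{2}$, $h_\parallel=-0.099$, $\|\bm{h}_\perp\|=0.01$ (admissible, $\|\bm{h}\|\approx 0.0995$) and $t=-90$: then $|0.901+0.01t|=0.001<\nicefrac{2}{3}$ so $t\in\mathcal{E}_i^c$, yet $|9+t|=81\ge\nicefrac{\sqrt{1.01}}{(\rho(1+\eta))}\approx 6.7$ so $t\in A$; the claimed pointwise domination $\mathbb{1}_{\mathcal{E}_i}\ge\mathbb{1}_A$ fails. The lemma is nonetheless true because such $t$ carry negligible Cauchy/Gaussian mass, but establishing that --- i.e., showing the relevant indicator-weighted moments are essentially minimized at the corner $(h_\parallel,\|\bm{h}_\perp\|)=(-\rho,\rho)$, with the $\sqrt{1.01}$ slack absorbing the discrepancy --- is the actual technical content of \cite[Lemma 5]{taf}, and it is missing from your argument.
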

To have an estimate of the size of $\xi_1$ in~\eqref{eq:zeta12}, 
if $\gamma=0.7$ and $\rho=1/10$, we have
$\mathbb{E}\left[\mathbb{1}_{\left\{\left|\frac{1-\rho}{\rho}+\frac{Y}{Z}
	\right|\ge \frac{\sqrt{1.01}}{\rho\left(1+\gamma\right)}
	\right\}
}
\right]\approx 0.9216$, and $\mathbb{E}\left[Z^2\mathbb{1}_{\left
	\{\left|\frac{1-\rho}{\rho}+\frac{Y}{Z}
	\right|\ge \frac{\sqrt{1.01}}{\rho\left(1+\gamma\right)}\right\}
}
\right]\approx 0.9908$, hence leading to $\zeta_1\approx 0.0784$.


\vspace{1cm}

\bibliographystyle{IEEEtranS}


\bibliography{apower}

\end{document}